\numberwithin{equation}{section}
\newcommand{\norm}[1]{\left\lVert#1\right\rVert}
\theoremstyle{plain}
\newtheorem{proposition}{Proposition}[section]
\newtheorem{theorem}{Theorem}[section]
\newtheorem{lemma}{Lemma}[section]
\theoremstyle{definition}
\newtheorem{assumption}{Assumption}[section]
\theoremstyle{remark}
\newtheorem{rk}{Remark}[section]
\let\expandafter\oldproof\csname\string\proof\endcsname
\let\oldendproof\endproof
\renewenvironment{proof}[1][\proofname]{%
  \oldproof[\noindent\textbf{#1.} ]%
}{\oldendproof}
\newcommand{\E}{\mathbb{E}}
\newcommand{\be}{\begin{equation}}
\newcommand{\ee}{\end{equation}}
\newcommand{\by}{\begin{eqnarray*}}
\newcommand{\ey}{\end{eqnarray*}}
\renewcommand{\leq}{\leqslant}
\renewcommand{\geq}{\geqslant}
\definecolor{dark-red}{rgb}{0.4,0.15,0.15}
\definecolor{dark-blue}{rgb}{0.15,0.15,0.4}
\definecolor{medium-blue}{rgb}{0,0,0.5}
\begin{document}
\title[On the convergence of an improved and adaptive kinetic simulated annealing]{On the convergence of an improved and adaptive kinetic simulated annealing}
\author{Michael C.H. Choi}
\address{School of Data Science, The Chinese University of Hong Kong, Shenzhen and Shenzhen Institute of Artificial Intelligence and Robotics for Society, Guangdong, 518172, P.R. China}
\email{michaelchoi@cuhk.edu.cn}

\date{\today}
\maketitle

\begin{abstract}
		Inspired by the work of \cite{FQG97}, who propose an improved simulated annealing algorithm based on a variant of overdamped Langevin diffusion with state-dependent diffusion coefficient, we cast this idea in the kinetic setting and develop an improved kinetic simulated annealing (IKSA) method for minimizing a target function $U$. To analyze its convergence, we utilize the framework recently introduced by \cite{M18} for the case of kinetic simulated annealing (KSA). The core idea of IKSA rests on introducing a parameter $c > \inf U$, which de facto modifies the optimization landscape and clips the critical height in IKSA at a maximum of $c - \inf U$. Consequently IKSA enjoys improved convergence with faster logarithmic cooling than KSA. To tune the parameter $c$, we propose an adaptive method that we call IAKSA which utilizes the running minimum generated by the algorithm on the fly, thus avoiding the need to manually adjust $c$ for better performance. We present positive numerical results on some standard global optimization benchmark functions that verify the improved convergence of IAKSA over other Langevin-based annealing methods.

	\smallskip
	
	\noindent \textbf{AMS 2010 subject classifications}: 60J25, 60J60, 46N30 
	
	\noindent \textbf{Keywords}: simulated annealing; Langevin diffusion; relative entropy; log-Sobolev constant; self-interacting diffusion; landscape modification
\end{abstract}

\tableofcontents


\section{Introduction}

Given a target function $U : \mathbb{R}^d \to \mathbb{R}$ to minimize, we are interested in simulated annealing algorithms based on Langevin diffusion and its various variants. Let us begin by briefly recalling the dynamics of the classical overdamped Langevin diffusion $(\mathcal{Z}_t)_{t \geq 0}$ for simulated annealing (SA):
\begin{align}\label{eq:classical}
	d \mathcal{Z}_t = -\nabla U(\mathcal{Z}_t) \, dt + \sqrt{2 \epsilon_{t}} d B_t,
\end{align}
where $(B_t)_{t \geq 0}$ is the standard $d$-dimensional Brownian motion and $(\epsilon_{t})_{t \geq 0}$ is the temperature or cooling schedule. The instantaneous stationary distribution of \eqref{eq:classical} at time $t$ is the Gibbs distribution that we denote by
$$\mu_{\epsilon_{t}}^{0}(x) \propto e^{-\frac{1}{\epsilon_{t}}U(x)}.$$
We shall explain the seemingly strange upper script that appears in $\mu_{\epsilon_{t}}^{0}$ later in \eqref{eq:mueps}. This overdamped Langevin dynamics and its convergence have been analyzed in \cite{CHS87,HKS89,Miclo92AIHP,J92}. It can be shown that under the logarithmic cooling schedule of the form
\begin{align}\label{eq:cool}
\epsilon_{t} = \dfrac{E}{\ln t}, \quad \text{large enough} \, t,
\end{align}
 where $E > E_*$, $(\mathcal{Z}_t)_{t \geq 0}$ gradually concentrates around the set of global minima of $U$ in the sense that for any $\delta >0$,
 $$\lim_{t \to \infty} \mathbb{P}\left(U(\mathcal{Z}_t) > U_{min} + \delta\right) = 0.$$
 Here we write $U_{min} := \inf U$. We call $E$ the energy level and $E_*$ the hill-climbing constant or the critical height associated with $U$. Intuitively speaking, $E_*$ is the largest hill one need to climb starting from a local minimum to a fixed global minimum. For a precise definition of $E_*$, we refer readers to \eqref{eq:E*} below.
 
 While the overdamped Langevin diffusion \eqref{eq:classical} can be seen as the continuous counterpart of gradient descent perturbed by Gaussian noise, the analogue of momentum method in this context is the kinetic or underdamped Langevin diffusion. The kinetic Langevin dynamics $(\mathcal{X}_t, \mathcal{Y}_t)_{t \geq 0}$ (KSA) is described by
 \begin{align}\label{eq:kinetic}
	 d \mathcal{X}_t &= \mathcal{Y}_t \, dt, \\
	 d \mathcal{Y}_t &= - \dfrac{1}{\epsilon_t} \mathcal{Y}_t \, dt - \nabla U(\mathcal{X}_t) \, dt + \sqrt{2} \, dB_t,
 \end{align}
 where $(\mathcal{X}_t)_{t \geq 0}$ stands for the position and $(\mathcal{Y}_t)_{t \geq 0}$ is the velocity or momentum variable. The instantaneous stationary distribution of $(\mathcal{X}_t, \mathcal{Y}_t)_{t \geq 0}$ at time $t$ is the product distribution of $\mu_{\epsilon_{t}}^{0}$ and the Gaussian distribution with variance $\epsilon_{t}$ that we denote by
 $$\pi_{\epsilon_{t}}^{0}(x,y) \propto e^{-\frac{1}{\epsilon_{t}}U(x)}e^{-\frac{\norm{y}^2}{2\epsilon_{t}}}.$$
 We will explain the notation $\pi_{\epsilon_{t}}^0$ in \eqref{eq:pieps} below. Unlike the overdamped Langevin dynamics \eqref{eq:classical} which is reversible, the kinetic counterpart \eqref{eq:kinetic} is in general non-reversible, which imposes technical difficulties in establishing its long-time convergence. On the other hand, it is known in the literature that using non-reversible dynamics may accelerate convergence in the context of sampling or optimization, see for example \cite{DHN00,Bie16,CH13,L97,DLP16,DNP17,HHS05,GGZ18,HWGGZ20}. Using a distorted entropy approach, in \cite{M18} the author proves for the first time convergence result of kinetic simulated annealing: under the same logarithmic cooling schedule as in \eqref{eq:cool}, for any $\delta >0$ we have
 $$\lim_{t \to \infty} \mathbb{P}\left(U(\mathcal{X}_t) > U_{min} + \delta\right) = 0.$$
 More recently, \cite{CKP20} analyze the generalized Langevin dynamics for simulated annealing based on the framework introduced in \cite{M18}.

Many techniques have been developed in the literature to improve or to accelerate the convergence of Langevin dynamics. In this paper, we are particularly interested in an improved variant of Langevin dynamics $(Z_t)_{t \geq 0}$ (ISA) with state-dependent diffusion coefficient, introduced by \cite{FQG97}, and its dynamics is described by the following:
\begin{align}\label{eq:improved}
dZ_t = - \nabla U(Z_t)\,dt + \sqrt{2 \left(f((U(Z_t)-c)_+) +\epsilon_t\right)}\,dB_t,
\end{align}
where we write $a_+ = \max\{a,0\}$ for $a \in \mathbb{R}$. Comparing the improved dynamics \eqref{eq:improved} with the classical one \eqref{eq:classical}, we see that the function $f: \mathbb{R} \to \mathbb{R}^+$ and the parameter $c$ are introduced. We formally state the assumptions needed on both $f$ and $c$ in Assumption \ref{assump:main} below. To briefly summarize, we need to choose $c > U_{min}$ and $f$ to be twice-differentiable, non-negative, bounded and non-decreasing with $f(0) = f^{\prime}(0) = f^{\prime \prime}(0) = 0$. It is shown in \cite{FQG97} that the instantaneous stationary distribution at time $t$ of \eqref{eq:improved} is given by
\begin{align}\label{eq:mueps}
	\mu_{\epsilon_{t}}^f(x) &= \mu_{\epsilon_{t},c}^f(x) \propto e^{-H_{\epsilon_{t}}(x)} = \dfrac{1}{f((U(x)-c)_+) + \epsilon_t} \exp\left(-\int_{U_{min}}^{U(x)} \dfrac{1}{f((u-c)_+) + \epsilon_t}\, du \right),
\end{align}
where 
\begin{align}\label{eq:Heps}
H_{\epsilon}(x) = H_{\epsilon,c}(x) := \int_{U_{min}}^{U(x)} \dfrac{1}{f((u-c)_+) + \epsilon}\, du + \ln \left(f((U(x)-c)_+) + \epsilon \right).
\end{align}
Observe that if $f = 0$, \eqref{eq:improved} reduces to the classical overdamped dynamics \eqref{eq:classical}. As such $\mu_{\epsilon}^f$ can be considered as a generalization of the Gibbs distribution $\mu_{\epsilon}^{0}$. This also explains the notation $\mu_{\epsilon_{t}}^{0}$ earlier. 

One important difference between \eqref{eq:improved} and \eqref{eq:classical}
is the introduction of state-dependent diffusion coefficient: the greater the difference between $U(Z_t)$ and $c$, the greater (in absolute terms) the Gaussian noise is to be injected, and this extra noise may improve the convergence by helping the dynamics to escape a local minimum or saddle point. On the other hand, in the region where $U(Z_t) < c$ the dynamics evolves in the same manner as the classical dynamics. As for the theoretical benefits, in \cite{FQG97} the authors demonstrate that under the logarithmic cooling schedule of the form \eqref{eq:cool} with $E > c_*$ and for any $\delta >0$,
$$\lim_{t \to \infty} \mathbb{P}\left(U(Z_t) > U_{min} + \delta\right) = 0,$$
where we call $c_*$ the \textit{clipped} critical height, to be defined formally in \eqref{eq:c*} below. It can be shown that $E_* \geq c_*$ and $c - U_{min} \geq c_*$, and hence one can understand as if the critical height is capped at a maximum level $c - U_{min}$. The key technical insight in \cite{FQG97} relies on both the spectral gap and the log-Sobolev constant are of the order $e^{c_*/\epsilon_{t}}$. As a result, we can operate a faster cooling schedule for the improved dynamics \eqref{eq:improved} that still enjoys convergence guarantee.

The crux of this paper is to cast the idea of \cite{FQG97} into the kinetic Langevin setting for simulated annealing. One way to do so is to think of altering the target function: in SA the exponent in $\mu_{\epsilon_{t}}^0$ is $-(1/\epsilon_{t}) U(x)$, while in ISA the exponent in $\mu_{\epsilon_{t}}^f$ \eqref{eq:mueps} takes on the generalized form as $-H_{\epsilon_{t}}$. In this way the optimization landscape is de facto modified from $U$ to $H_{\epsilon}$ and hopefully improved. We apply this idea and simply substitute $(1/\epsilon_{t}) U(x)$ by $H_{\epsilon_{t}}$ in its dynamics. More precisely, we are interested in the following dynamics $(X_t, Y_t)_{t \geq 0}$ that we call IKSA:
\begin{align}\label{eq:improvedk}
d X_t &= Y_t \, dt, \\
d Y_t &= - \dfrac{1}{\epsilon_t} Y_t \, dt - \epsilon_t \nabla_x H_{\epsilon_t,c}(X_t) \, dt + \sqrt{2} \, dB_t.
\end{align}
Its instantaneous stationary distribution at time $t$ is the product distribution of $\mu_{\epsilon_{t}}^f$ and the Gaussian distribution with mean $0$ and variance $\epsilon_{t}$:
\begin{align}\label{eq:pieps}
\pi_{\epsilon_{t}}^f(x,y) = \pi_{\epsilon_{t},c}^f(x,y) \propto \mu_{\epsilon_{t}}^f(x) e^{-\frac{\norm{y}^2}{2\epsilon_{t}}} \propto e^{-H_{\epsilon_{t}}(x)} e^{-\frac{\norm{y}^2}{2\epsilon_{t}}}.
\end{align}
Note that when $f = 0$, $\nabla_x H_{\epsilon_{t}}(x) = \frac{1}{\epsilon_{t}}\nabla_x U(x)$, and we retrieve exactly the classical kinetic Langevin diffusion \eqref{eq:kinetic}. As such we can think of IKSA as a generalization of KSA. This also explains the notation $\pi_{\epsilon_{t}}^0$ that appears earlier.

For a general $f$ (that satisfies Assumption \ref{assump:main} below), in the case of $U(x) \leq c$ we have $\nabla_x H_{\epsilon_{t}}(x) = \frac{1}{\epsilon_{t}}\nabla_x U(x)$, and hence the improved kinetic dynamics \eqref{eq:improvedk} evolves in the same way as the classical one \eqref{eq:kinetic} in this region. In the other case when $U(x) > c$, we see that
$$\nabla_x H_{\epsilon} = \dfrac{1 + f^{\prime}((U(x) - c)_+)}{f((U(x)-c)_+) + \epsilon} \nabla_x U.$$
Thus, the greater $U(x)$ is relative to $c$, the greater the denominator in $\nabla_x H_{\epsilon_{t}}$ in the above equation, and the more dominant is the Brownian noise in the velocity update of $Y_t$. This effect can hopefully improve the convergence of $U(X_t)$ when its value is greater than $c$. To illustrate, we plot the following function
$$U_0(x)=\cos (2 x)+\frac{1}{2} \sin (x)+\frac{1}{3} \sin (10 x)$$
as in \cite{M18} and compare this landscape with that of $H_{\epsilon,c}$ in Figure \ref{fig:landscape}, where we take $\epsilon = 0.5$, $c = -1.5$ and $f = \arctan$. For example, when $x \in (-4,-2)$ in Figure \ref{fig:landscape}, we can see that the gradient of $H_{\epsilon,c}$ is much smaller than that of $U$, thus the Brownian noise plays a relatively more dominant role in this region in the velocity update. While it may not be immediately apparent in Figure \ref{fig:landscape}, we note that both $U_0$ (the black and solid curve) and $H_{\epsilon,c}$ (the orange and dashed curve) share exactly the same set of stationary points. As for the theoretical advantage of using IKSA, we shall prove that we can operate a faster logarithmic cooling schedule than KSA, relying on the key technical insight that the instantaneous spectral gap and the log-Sobolev constant are of the order $e^{c_*/\epsilon_{t}}$.

While there are practical benefits in using IKSA over KSA or ISA over SA, on the other hand they come along with extra computational costs: in the velocity update of IKSA, in addition to evaluating the gradient of $U$, we would need to evaluate both $f$ and its derivative $f^{\prime}$ at $(U(x) - c)_+$, as the core idea of IKSA or ISA rests on comparing $U(X_t)$ and $c$ for all $t$. As such if we implement the Euler-Maruyama discretization of IKSA, extra function evaluations are required at each iteration.

The objective of this paper is to promote the idea of landscape modification and state-dependent noise in stochastic optimization. It has been brought to us by \cite{M20} that Olivier Catoni has also considered the operation from $U$ to $g(U)$ in a discrete-time and finite state space setting with concave $g$. Recently in \cite{GHT20} the authors study perturbed gradient descent with state-dependent noise, using the notion of occupation time.
\newpage
\begin{figure}[H]
	\includegraphics[width=0.7\linewidth]{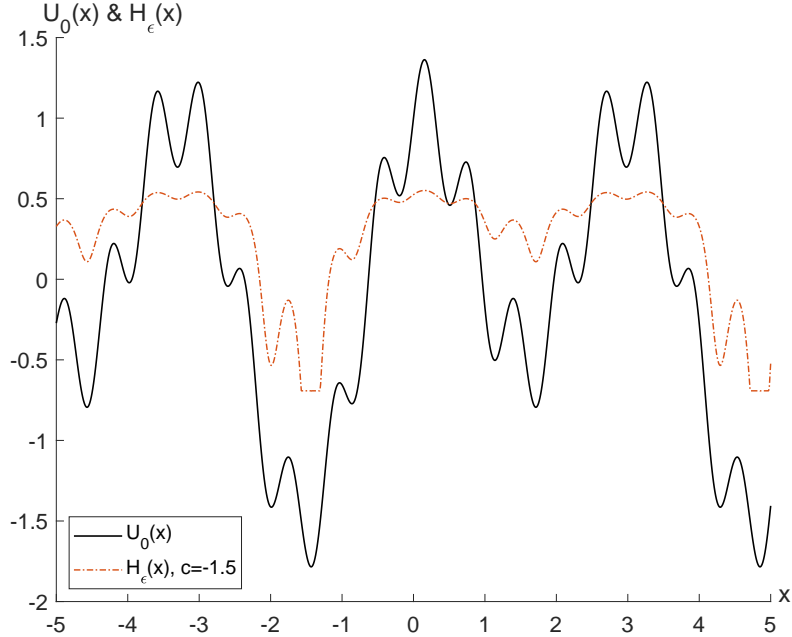}
	\caption{Landscape of $U_0$ and $H_{\epsilon,c}$, where $\epsilon = 0.5$, $c = -1.5$ and $f = \arctan$. We evaluate the integral in $H_{\epsilon,c}$ using the numerical integration package in MATLAB.}
	\label{fig:landscape}
\end{figure}

We summarize the main contributions of this paper below:

\begin{enumerate}
	\item \textbf{Propose an improved kinetic simulated annealing (IKSA) method and analyze its convergence}
	
	In our main result Theorem \ref{thm:main1} below, we will prove that under the logarithmic cooling of the form \eqref{eq:cool} with energy level $E > c_*$, where both $c$ and $E$ are fixed, the improved kinetic annealing $(X_t, Y_t)_{t \geq 0}$ converges: for any $\delta >0$,
	$$\lim_{t \to \infty} \mathbb{P}\left(U(X_t) > U_{min} + \delta\right) = 0.$$
	This will be proved using the framework proposed by \cite{M18}, along with the key technical insight that the log-Sobolev constant is of the order $e^{c_*/\epsilon_{t}}$ at time $t$.
	
	\item \textbf{Propose an adaptive (IAKSA) method to tune the parameter $c$ and the energy level $E$, and analyze its convergence}
	
	The convergence behaviour of $(X_t,Y_t)_{t \geq 0}$ in IKSA highly depends on the value of the parameter $c > U_{min}$. Ideally we would like to choose $c$ to be close to $U_{min}$ (so that the clipped critical height $c_*$ is as small as possible), but it can be hard to achieve in practice without a priori information on $U$. 
	
	In our second main result Theorem \ref{thm:main2} below, we tune both $c$ and $E$ adaptively by incorporating the information of the running minimum $\min_{v \leq t}U(X_v)$, where both $c = (c_t)_{t \geq 0}$ and $E = (E_t)_{t \geq 0}$ depend on the running minimum generated by the algorithm on the fly. We call the resulting non-Markovian diffusion IAKSA. Although the setting is slightly different, this idea is in reminiscence of the adaptive biasing method \cite{BB19,BBM20,LM11,LRS08} or the self-interacting annealing method \cite{R09}, thus avoiding the need to manually tune the parameter $c$ for better performance. We also mention the related work of memory gradient diffusions \cite{GPP13,GP14}. Adaptive algorithms are popular in the Markov chain Monte Carlo (MCMC) literature as well \cite{RR09,AT08,FMP11}. Note that in our context, the idea of tuning $c$ adaptively on the fly dates back to the work \cite{FQG97} for ISA.
	
	\item \textbf{Present numerical experiments to illustrate the performance of IAKSA}	
	
	We compare the performance of four simulated annealing methods, namely IAKSA, IASA \eqref{eq:improved} (i.e. ISA with the parameter $c$ tuned adaptively in the same way as IAKSA), KSA \eqref{eq:kinetic} and SA \eqref{eq:classical}, on minimizing three standard global optimization benchmark functions \cite{JY13}. Empirical results demonstrate the improved convergence performance of IAKSA over other annealing methods that are based on Langevin diffusions.
\end{enumerate}

\subsection{Notations}

Before we discuss our main results, we fix a few notations that are frequently used throughout the paper. For $a,b \in \mathbb{R}$, we write $a \wedge b = \min\{a,b\}$ and $a_+ = \max\{a,0\}$. For $\mathbf{v} \in \mathbb{R}^d$, we write $\norm{\mathbf{v}}$ to be its Euclidean norm. We also denote by $\partial_x$ to be the partial derivative with respect to $x$. For two functions $g_1, g_2$ on $\mathbb{R}$, we write $g_1 = \mathcal{O}(g_2)$ if there exists constant $C > 0$ such that $g_1(x) \leq C g_2(x)$ for large enough $x$. We write $g_1 = \Omega(g_2)$ if $g_2 = \mathcal{O}(g_1)$. We also use the little-o notation: $g_1 = o(g_2)$ if $\lim_{x \to \infty} g_1(x)/g_2(x) = 0$. We say that a function $\xi(\epsilon)$ is a subexponential function if $\lim_{\epsilon \to 0} \epsilon \ln \xi(\epsilon) = 0$.

In the rest of the paper, as $f$ is fixed, we shall hide its dependence on various quantities. We will write $\pi_{\epsilon_{t}} = \pi_{\epsilon_{t}}^f$ and $\mu_{\epsilon_{t}} = \mu_{\epsilon_{t}}^f$.

\subsection{Overview of the main results}

In this subsection, we state our main results. First, let us clearly state the assumptions on the target function $U$, the function $f$ and the parameter $c$. Note that the critical height $E_*$ and clipped critical height $c_*$ will be introduced in Section \ref{subsec:logsob}. These assumptions are standard in the simulated annealing literature.

\begin{assumption}\label{assump:main}
	\begin{enumerate}
		\item\label{it:assumpU} The potential function $U$ is smooth with bounded second derivatives, that is,
		$$\norm{\nabla_x^2 U}_{\infty} = \sup _{x \in \mathbb{R}^{d}} \sum_{i, j=1}^{d}\left(\partial_{x_{i}} \partial_{x_{j}} U(x)\right)^{2} < \infty.$$
		Also, there exist constants $a_1, a_2, r, M > 0$ such that $U$ satisfies
		\begin{align*}
		a_1 \norm{x}^2 - M &\leq U(x) \leq a_2 \norm{x}^2 + M, \\
		- \nabla_x U(x) \cdot x &\leq -r \norm{x}^2 + M,
		\end{align*}
		where $x \in \mathbb{R}^d$.
		
		\item\label{it:assumpf} The function $f: \mathbb{R} \to \mathbb{R}^+$ is twice-differentiable, bounded, non-negative and non-decreasing. Furthermore, $f$ satisfies
		$$f(0) = f^{\prime}(0) = f^{\prime \prime}(0) = 0,$$
		and there exist constant $M_3, M_4 > 0$ such that
		$$f(x) = M_4, \quad \text{if } x \geq M_3.$$
		We also denote $M_5 := \sup_{0 \leq x \leq M_3} f^{\prime}(x)$. 
		
		\item The cooling schedule satisfies, for large enough $t$,
		$$|\partial_t \epsilon_t| = \mathcal{O}\left(\dfrac{1}{t}\right).$$
		This is for instance satisfied by the logarithmic cooling schedule \eqref{eq:cool}.
		
		\item The parameter $c$ is picked so that $c > U_{min}$. In the adaptive case, $c_t > U_{min}$ for all $t \geq 0$.
		
		\item The initial law of $(X_0,Y_0)$ admits a smooth density with respect to the Lebesgue measure that we denote by $m_0$. Its Fisher information $\int \norm{\nabla m_0}^2/m_0 \, dxdy$ and moments $\mathbb{E}\left(\norm{X_0}^p + \norm{Y_0}^p\right)$ are all finite, where $p \geq 0$.
	\end{enumerate}
\end{assumption}

Our first main result gives large-time convergence guarantee for IKSA $(X_t,Y_t)_{t \geq 0}$, introduced earlier in \eqref{eq:improvedk}:

\begin{theorem}\label{thm:main1}[Convergence of IKSA]
	Under Assumption \ref{assump:main}, for any $\delta > 0$, as $t \to \infty$ we have
	\begin{align*}
	\mathbb{P}\left(U(X_t) > U_{min} + \delta \right) \to 0.
	\end{align*}
	If we employ the logarithmic cooling schedule of the form $\epsilon_{t} = \frac{E}{\log(t)}$ for large enough $t$, where $E > c_*$, and both $c$ and $E$ are fixed, then for any $\alpha, \delta > 0$, there exists constant $A > 0$ such that
	\begin{align}\label{eq:main1bd}
	\mathbb{P}\left(U(X_t) > U_{min} + \delta \right) \leq A \left(\dfrac{1}{t}\right)^{\min\bigg\{\frac{1 - \frac{c_*}{E} - \alpha}{2}, \frac{\delta}{2E}\bigg\}}.
	\end{align}
\end{theorem}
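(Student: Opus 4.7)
The plan is to follow the distorted-entropy framework of \cite{M18} developed for the kinetic (non-reversible) case, and to substitute into it a sharpened log-Sobolev inequality that reflects the clipped landscape. Let $m_t(x,y)$ denote the density of $(X_t,Y_t)$ with respect to Lebesgue measure, and let $h_t := m_t/\pi_{\epsilon_t}$ be its relative density against the instantaneous stationary distribution. The Lyapunov functional $J_t$ I would track combines the relative entropy $\mathrm{Ent}_{\pi_{\epsilon_t}}(h_t)$ with appropriately weighted Fisher-information terms in $x$ and $y$ and a cross term of the form $\int \nabla_x \log h_t \cdot \nabla_y \log h_t \, dm_t$. The cross term is essential: it is what transfers the dissipation from the velocity direction (where the Brownian noise acts directly) back into the position direction, exactly as in standard hypocoercivity estimates for the classical kinetic Langevin.

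The main obstacle, and what distinguishes IKSA from KSA, is establishing a sharpened log-Sobolev inequality for $\pi_{\epsilon_t}$ with constant of order $e^{c_*/\epsilon_t}$ up to subexponential factors, in contrast with the $e^{E_*/\epsilon_t}$ bound available in KSA. Since $\pi_{\epsilon_t}$ tensorises as $\mu_{\epsilon_t} \otimes \mathcal{N}(0, \epsilon_t I_d)$, the Gaussian velocity marginal contributes constant $\epsilon_t$ via the standard Gaussian LSI, and by tensorisation the task reduces to an LSI for $\mu_{\epsilon_t}$. For this I would follow \cite{FQG97}: partition $\mathbb{R}^d$ at the level set $\{U = c\}$, apply a Holley--Stroock perturbation on $\{U \leq c\}$ (where $H_{\epsilon_t,c}$ agrees with $U/\epsilon_t$ up to an additive constant), and exploit the explicit form \eqref{eq:Heps} on $\{U > c\}$, where the integrand is uniformly bounded by $1/(f((u-c)_+) + \epsilon_t)$. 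The latter caps the contribution of any barrier above height $c$ at a cost that is bounded independently of $\epsilon_t$, so the maximum effective barrier seen by $\mu_{\epsilon_t}$ is $c - U_{min}$ and, combined with the intrinsic critical height $E_*$, produces the clipped constant $c_*$.

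With the LSI secured, differentiating $J_t$ along the Fokker--Planck equation for IKSA yields a Gr\"onwall-type inequality
\[
\partial_t J_t \le -\kappa(\epsilon_t)\, J_t + R(\epsilon_t),
\]
where $\kappa(\epsilon_t)$ is controlled from below by the LSI constant of $\pi_{\epsilon_t}$ (hence $\gtrsim e^{-c_*/\epsilon_t}$ up to subexponentials), and the remainder $R(\epsilon_t)$ collects the time-inhomogeneity error from $\partial_t \pi_{\epsilon_t}$ (dominated via the assumption $|\partial_t \epsilon_t| = \mathcal{O}(1/t)$) together with the extra terms produced by differentiating $H_{\epsilon_t,c}$ in $t$. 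Boundedness of $f, f', f''$ and the quadratic growth of $U$ in Assumption \ref{assump:main} keep these remainders of the same order as in \cite{M18}; standard moment bounds along the flow, again using the confining hypothesis on $U$, justify the integration by parts.

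The final step is to integrate the differential inequality and translate it into a probability bound. Under the logarithmic cooling $\epsilon_t = E/\ln t$ with $E > c_*$, one gets $\kappa(\epsilon_t) \gtrsim t^{-c_*/E - \alpha/2}$ for arbitrarily small $\alpha > 0$, and the standard asymptotic analysis of the Gr\"onwall inequality produces $J_t = \mathcal{O}(t^{-(1 - c_*/E - \alpha)})$. To obtain \eqref{eq:main1bd} I would split
\[
\mathbb{P}(U(X_t) > U_{min} + \delta) \le \pi_{\epsilon_t}\{U > U_{min} + \delta\} + \|m_t - \pi_{\epsilon_t}\|_{TV},
\]
estimate the first term by a Laplace-type tail bound for $\mu_{\epsilon_t}$ of order $t^{-\delta/E}$ up to subexponentials, and control the second by Csisz\'ar--Kullback--Pinsker as $\mathcal{O}(\sqrt{J_t}) = \mathcal{O}(t^{-(1 - c_*/E - \alpha)/2})$; taking the worse of the two exponents gives precisely the minimum appearing in \eqref{eq:main1bd}. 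The qualitative statement in the first part of the theorem follows from the same scheme under only the weaker cooling assumption $|\partial_t \epsilon_t| = \mathcal{O}(1/t)$, by verifying $J_t \to 0$ without explicit rates.
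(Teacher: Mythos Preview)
Your proposal is correct and follows essentially the same route as the paper: the distorted-entropy hypocoercivity framework of \cite{M18}, the sharpened log-Sobolev inequality for $\pi_{\epsilon_t}$ with constant of order $e^{c_*/\epsilon_t}$ (obtained by tensorisation and an LSI for $\mu_{\epsilon_t}$), a Gr\"onwall argument giving polynomial decay of the distorted entropy, and the final splitting via Pinsker plus a Laplace tail for $\mu_{\epsilon_t}$. The only differences are minor technicalities---the paper derives the LSI for $\mu_{\epsilon_t}$ by applying the spectral-gap estimate of \cite{J92} to the modified potential $V_\epsilon$ and then invoking \cite[Theorem~4.4]{FQG97}, rather than a direct Holley--Stroock argument, and the stationary tail bound actually proved (Proposition~\ref{prop:Pstatbd}) is $e^{-\delta/(2\epsilon_t)} = t^{-\delta/(2E)}$, not $t^{-\delta/E}$, which is precisely the second exponent in \eqref{eq:main1bd}.
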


If we compare the result of \cite{M18} for KSA \eqref{eq:kinetic} against the result we obtain in Theorem \ref{thm:main1} for IKSA \eqref{eq:improvedk}, in essence we replace the critical height $E_*$ by the clipped critical height $c_*$. In retrospect this is perhaps unsurprising, as one can understand the modification in IKSA as clipping the target function from $U$ to $U \wedge c$. While IKSA enjoys improved logarithmic cooling when compared with KSA, logarithmic cooling is however known to be inefficient, see \cite{C92} and the Remark in \cite[Section $1.2$]{M18}.

In our second main result, we propose an adaptive method, that we call IAKSA, to tune the parameter $c = (c_t)_{t \geq 0}$ and the energy level $E = (E_t)_{t \geq 0}$ using the running minimum $\min_{v \leq t} U(X_v)$ on the fly. We shall discuss in more technical details in Section \ref{sec:adapt}. Note that the idea of tuning $c$ adaptively on the fly dates back to the work \cite{FQG97} for ISA.

\begin{theorem}\label{thm:main2}[Convergence of IAKSA]
	Under Assumption \ref{assump:main}, consider the kinetic dynamics $(X_t,Y_t)_{t \geq 0}$ described by
	\begin{align*}
	d X_t &= Y_t \, dt, \\
	d Y_t &= - \dfrac{1}{\epsilon_t} Y_t \, dt - \epsilon_t \nabla_x H_{\epsilon_t,c_t}(X_t) \, dt + \sqrt{2} \, dB_t,
	\end{align*}
	where $H_{\epsilon_t,c_t}$ is introduced in \eqref{eq:Heps}, $c_t$ is tuned adaptively according to \eqref{eq:ct} and the cooling schedule is 
	$$\epsilon_{t} = \dfrac{E_t}{\ln t}$$
	with $E_t$ satisfying \eqref{eq:Et}. Given $\delta > 0$, for large enough $t$ and a constant $A > 0$, we consider sufficiently small $\alpha$ such that $\alpha \in (0, \frac{\delta_2-\delta_1}{U(X_0) - U_{min} +\delta_2})$,  and select $\delta_1, \delta_2 > 0$ such that $0 < \delta_2 - \delta_1 < \delta$, to yield
	\begin{align*}
	\mathbb{P}\left(U(X_t) > U_{min} + \delta \right) \leq A \left(\dfrac{1}{t}\right)^{a},
	\end{align*}
	where 
	$$a := \min\bigg\{\frac{\frac{\delta_2 - \delta_1}{\delta + \delta_2}-\alpha}{2},\frac{\frac{\delta_2 - \delta_1}{U(X_0) - U_{min} + \delta_2}-\alpha}{2}\bigg\}.$$
\end{theorem}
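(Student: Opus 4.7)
The plan is to reduce this statement to (a time-varying version of) Theorem \ref{thm:main1} by working conditionally on the running minimum $m_t := \min_{v \le t} U(X_v)$, which is the only non-Markovian ingredient driving both $c_t$ and $E_t$. Since $m_t$ is almost surely non-increasing, $t \mapsto c_t$ is piecewise constant with only downward jumps, and at every time $t$ the instantaneous target $\pi_{\epsilon_t, c_t}^f$ given by \eqref{eq:pieps} is well defined. The starting point is therefore to redo the distorted entropy computation of \cite{M18}, but now for the relative entropy of the law of $(X_t, Y_t)$ with respect to $\pi_{\epsilon_t, c_t}^f$, tracking carefully the extra contribution to $\partial_t \pi_{\epsilon_t, c_t}^f$ coming from $c_t$. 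Because $c_t$ is constant between jumps and each jump only lowers $c_t$ (hence lowers $H_{\epsilon_t, c_t}$ pointwise, by monotonicity of $f$), this extra term behaves no worse than the $\partial_t \epsilon_t$ contribution already controlled in Theorem \ref{thm:main1}, and in particular remains of order $\mathcal{O}(1/t)$ modulo subexponential prefactors.

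The second, and key, step is to partition the event $\{U(X_t) > U_{\min} + \delta\}$ according to whether the running minimum has entered a prescribed neighborhood of $U_{\min}$. Concretely, I would introduce the events $A_t := \{m_t \le U_{\min} + \delta_1\}$ and its complement $A_t^c$. On $A_t$, the adaptive definition of $c_t$ (namely $c_t = m_t + (\delta_2 - \delta_1)$ or the analogous prescription leading to \eqref{eq:ct}--\eqref{eq:Et}) forces $c_t - U_{\min} \le \delta_2$, so the clipped critical height of the modified landscape is at most of order $\delta_2$, while $E_t$ is at least $\delta + \delta_2$ up to lower order terms. Plugging these bounds into the time-inhomogeneous version of \eqref{eq:main1bd} with $c_*/E$ replaced by the ratio of these quantities yields the first exponent $\frac{\delta_2 - \delta_1}{\delta + \delta_2}$. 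On $A_t^c$, the running minimum is still bounded above by $U(X_0)$, so $c_t - U_{\min}$ is controlled by $U(X_0) - U_{\min} + (\delta_2 - \delta_1)$ and one analogously obtains the second exponent $\frac{\delta_2 - \delta_1}{U(X_0) - U_{\min} + \delta_2}$. The restriction on $\alpha$ in the statement is exactly what makes both exponents strictly positive.

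The main obstacle is justifying the first step rigorously, that is, establishing a Gronwall-type differential inequality for the relative entropy of the adaptive (non-Markovian) law against the moving target $\pi_{\epsilon_t, c_t}^f$. Two points require care: (i) the log-Sobolev constant of $\pi_{\epsilon_t, c_t}^f$ must be shown to be of order $e^{c_{t,*}/\epsilon_t}$ \emph{uniformly} in the admissible range of $c_t$, which follows from the arguments used for Theorem \ref{thm:main1} since $f$ and $U$ are fixed; and (ii) at the (at most countably many) jump times of $c_t$, one must show that the relative entropy cannot increase uncontrollably — this is where the boundedness and monotonicity of $f$ (Assumption \ref{assump:main}\ref{it:assumpf}) are used, since the ratio $\pi_{\epsilon_t, c_t^-}^f / \pi_{\epsilon_t, c_t}^f$ is bounded by a subexponential factor in $\epsilon_t$. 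Once this robustified entropy estimate is in place, combining the two-case decomposition above with a union bound and optimizing in $\alpha$ gives the polynomial rate $a$ claimed in the statement.
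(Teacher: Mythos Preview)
Your handling of the non-Markovian ingredient contains a genuine misconception. The running minimum $\mathcal{M}_t = \min_{v\le t} U(X_v)$ of the continuous process $U(X_t)$ is itself continuous and non-increasing; it is \emph{not} piecewise constant and has no jumps. More to the point, the paper does not work with $\mathcal{M}_t$ directly: in \eqref{eq:ct}--\eqref{eq:Et} both $c_t$ and $E_t$ are defined as \emph{mollifications} $\varphi_{1/n}\star\mathcal{M}_{(\cdot-\,\text{shift})_+}$ precisely so that they are $C^1$ in $t$. The whole point of this mollification is to make the time derivative $\partial_t c_t$ exist, so that the truncated distorted entropy picks up an additional smooth term $c_t'\int \partial_{c_t}(\eta_{m,\epsilon_t,c_t}\Psi_{\epsilon_t}(h_t)\pi_{\epsilon_t,c_t})\,dxdy$ (see \eqref{eq:timedHetaad}), which is then bounded exactly like the $\epsilon_t'$ term once one verifies $|c_t'|=\mathcal{O}(1/t)$ and $|E_t'|=\mathcal{O}(1/t)$ (Lemma~\ref{lem:check}). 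Your proposed jump-time analysis and the control of the ratio $\pi_{\epsilon_t,c_t^-}^f/\pi_{\epsilon_t,c_t}^f$ are therefore aimed at a nonexistent difficulty, and you have not supplied the actual mechanism (differentiability plus $|c_t'|=\mathcal{O}(1/t)$) that makes the Gronwall step go through.

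Your final decomposition is also different from the paper's, and with your choice of threshold it would not yield the stated exponents. The paper does not take a union bound on $\{m_t\le U_{\min}+\delta_1\}$ versus its complement; instead it conditions on the filtration $\mathcal{F}_{(t-1/n)_+}$ generated by the running minimum, applies the conditional version of \eqref{eq:main1bd} with $c_*/E_t$ bounded via $c_*\le \mathcal{M}_{(t-2/n)_+}+\delta_1$ and $E_t\ge \mathcal{M}_{(t-2/n)_+}+\delta_2$, and then integrates the resulting bound against the law of $\mathcal{M}_{(t-2/n)_+}$, splitting the integral at $U_{\min}+\delta$ and using integration by parts. This is what produces the two exponents $\frac{\delta_2-\delta_1}{\delta+\delta_2}$ and $\frac{\delta_2-\delta_1}{U(X_0)-U_{\min}+\delta_2}$ exactly; your partition at level $\delta_1$ would give different (and in general weaker) constants.
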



\subsection{Numerical results}\label{subsec:num}

In this subsection, we present our numerical illustrations. The benchmark functions are the Rastrigin function $U_3$, Ackley3 function $U_2$ and Ackley function $U_1$. For further details on the experimental setup and the parameters used (such as initialization, stepsize or the cooling schedule), these are described in the Appendix.

We mimic Figure $3$ in \cite{M18}, and we plot the corresponding results in Figure \ref{fig:main} for the three benchmark functions. On the vertical axis, we plot 
$\log_{10} \mathbb{P}\left(\min_{v \leq t} U(X_v) > U_{min} + \delta\right)$ or $\log_{10} \mathbb{P}\left(\min_{v \leq t} U(Z_v) > U_{min} + \delta \right)$
against $\log_{10} t$ in Figure \eqref{fig:a}, \eqref{fig:c} and \eqref{fig:e}, and similarly we plot $\log_{10} \mathbb{P}\left(U(X_t) > U_{min} + \delta\right)$ or $\log_{10} \mathbb{P}\left(U(Z_t) > U_{min} + \delta \right)$
against $\log_{10} t$ in Figure \eqref{fig:b}, \eqref{fig:d} and \eqref{fig:f}. To compute these probabilities, we run 100 independent replicas and count the proportion of replicas for which $U(X_t) > U_{min} + \delta$ or $\min_{v \leq t} U(X_v) > U_{min} + \delta$. We inject the same sequence of Gaussian noise in each of the 100 replicas across all four annealing methods for fair comparison.

KSA and SA can be considered as the baseline algorithms for IAKSA and IASA respectively. In all of the plots in Figure \eqref{fig:main} IAKSA outperforms KSA, revealing that there is perhaps empirical advantage in using IAKSA over classical KSA in some instances.

For the Rastrigin function $U_3$ in Figure \eqref{fig:a} and \eqref{fig:b}, we note that IAKSA enjoys improved convergence over other methods, while SA does not seem to converge near $U_{min}$ at all. For IASA and KSA, although their running minimum reach the neighbourhood of $U_{min}$ when $\log_{10}t$ is approximately 3 to 5, as evident from Figure \eqref{fig:b} they however do not get stuck at the neighbourhood.

For the Ackley3 function $U_2$ in Figure \eqref{fig:c} and \eqref{fig:d}, we first observe that the curve corresponding to the running minimum of IASA drops fast but stays flat when $\log_{10} t$ is bigger than 5. In Figure \eqref{fig:d}, the only method that lingers around the neighbourhood of $U_{min}$ is IAKSA.

For the Ackley function $U_1$ in Figure \eqref{fig:e} and \eqref{fig:f}, the two overdamped methods IASA and SA seem to outperform the two kinetic methods IAKSA and KSA.

\newpage 

\begin{figure}[H] 
	\begin{subfigure}[b]{0.5\linewidth}
		\centering
		\includegraphics[width=1.0\linewidth]{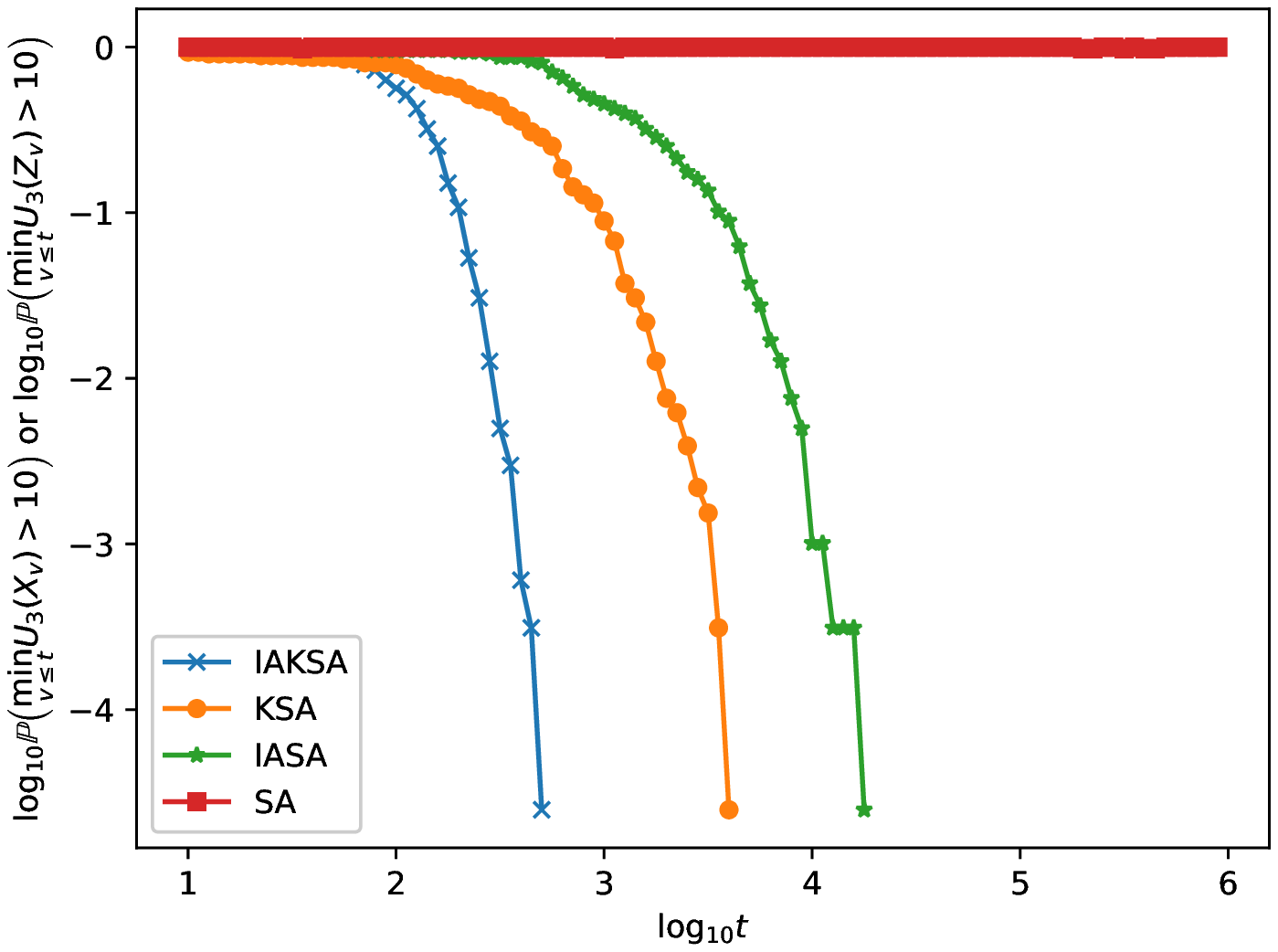} 
		\caption{Rastrigin function $U_3$: running minimum} 
		\label{fig:a} 
		\vspace{4ex}
	\end{subfigure}
	\begin{subfigure}[b]{0.5\linewidth}
		\centering
		\includegraphics[width=1.0\linewidth]{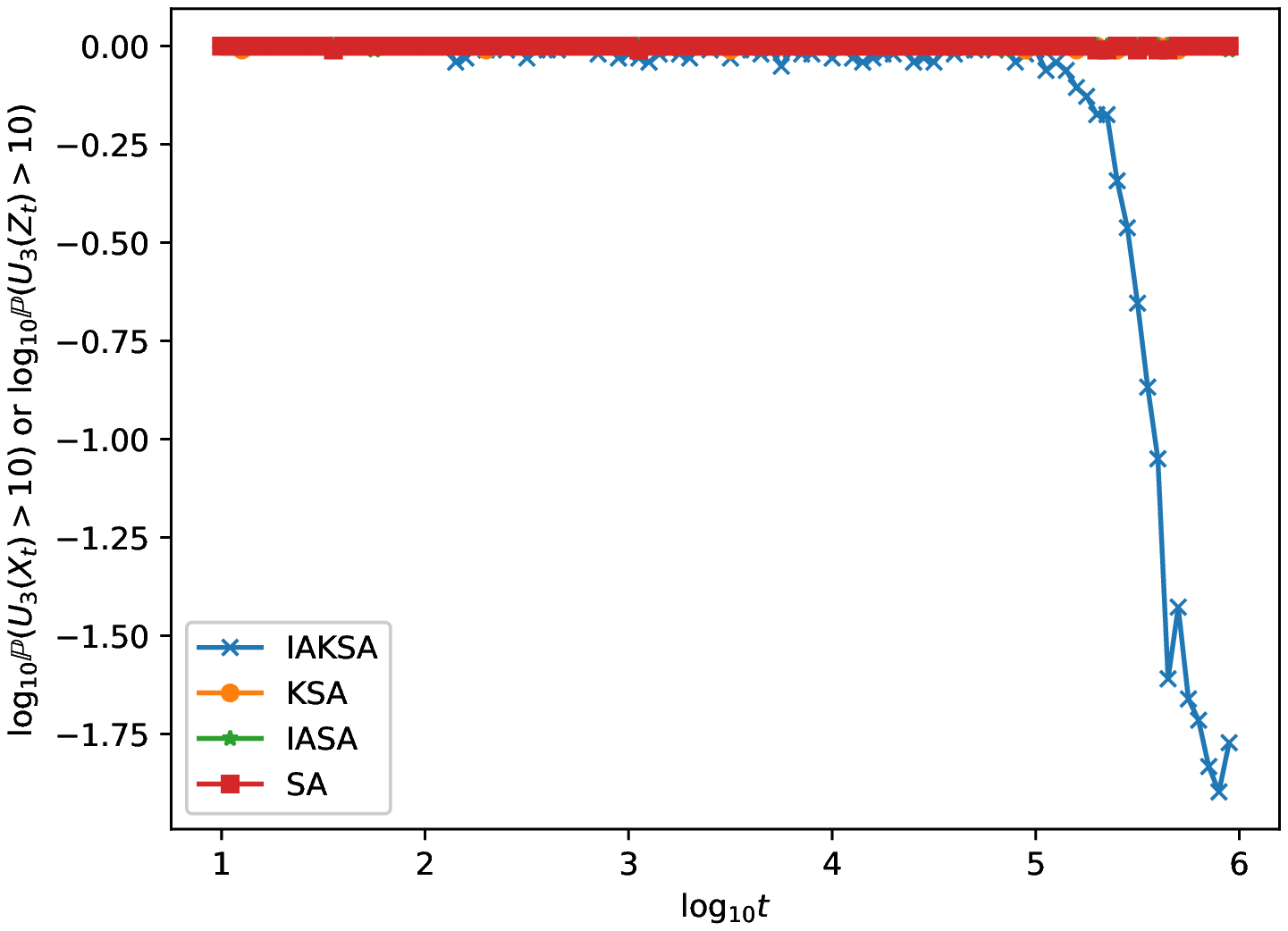} 
		\caption{Rastrigin function $U_3$: $U_3(X_t)$ or $U_3(Z_t)$} 
		\label{fig:b} 
		\vspace{4ex}
	\end{subfigure} 
	\begin{subfigure}[b]{0.5\linewidth}
		\centering
		\includegraphics[width=1.0\linewidth]{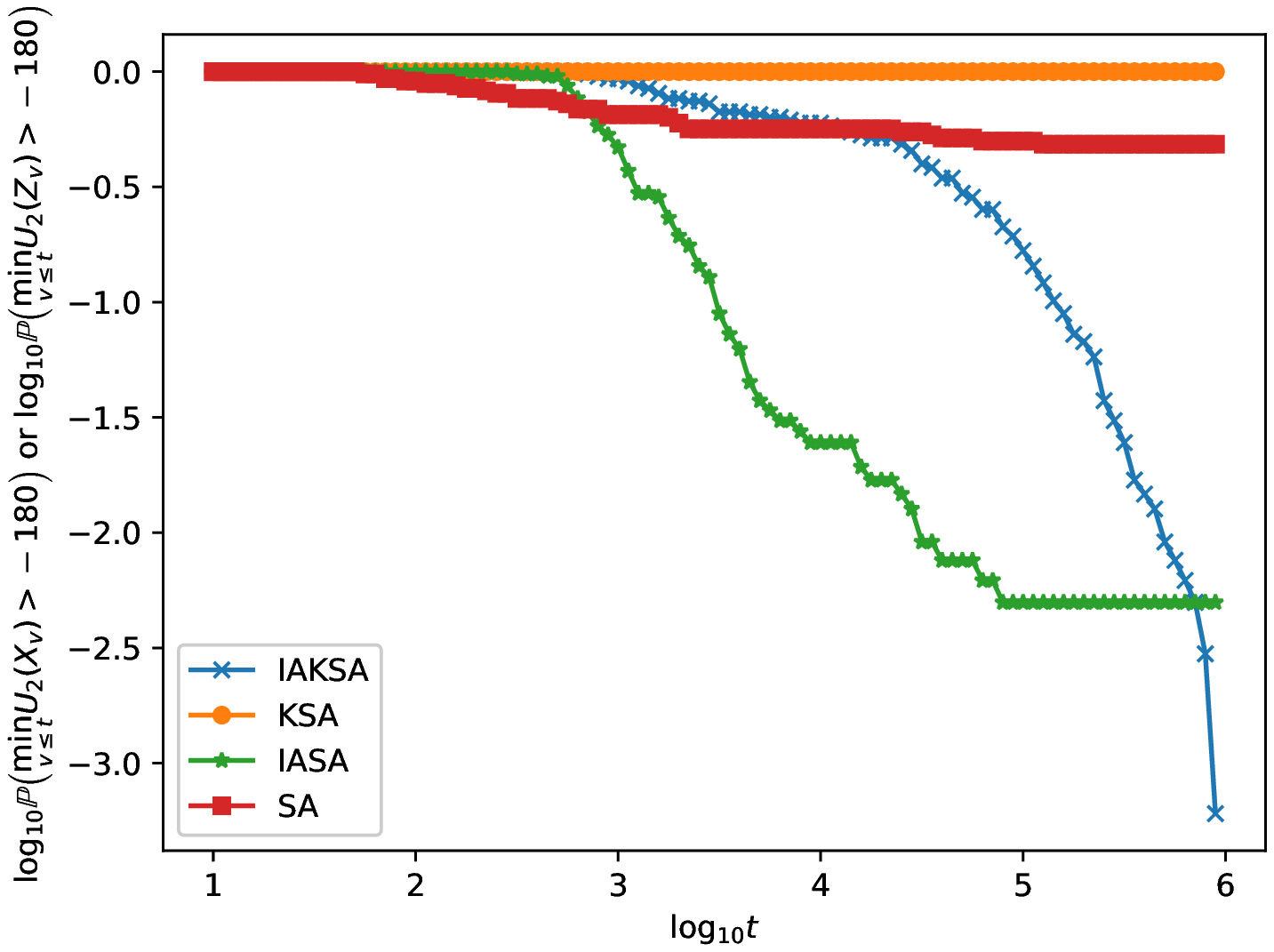} 
		\caption{Ackley3 function $U_2$: running minimum} 
		\label{fig:c} 
	\end{subfigure}
	\begin{subfigure}[b]{0.5\linewidth}
		\centering
		\includegraphics[width=1.0\linewidth]{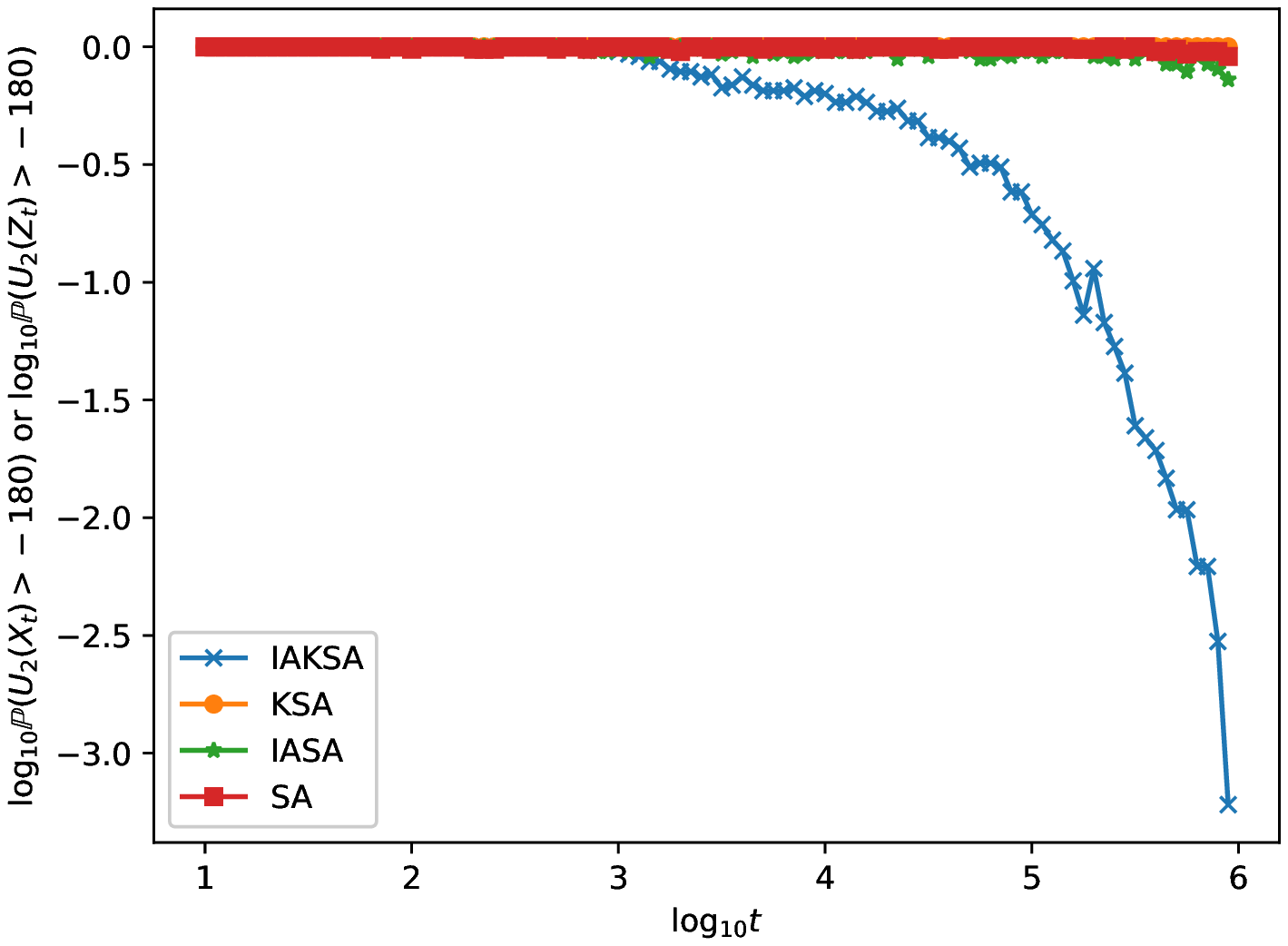} 
		\caption{Ackley3 function $U_2$: $U_2(X_t)$ or $U_2(Z_t)$} 
		\label{fig:d} 
	\end{subfigure} 
	\begin{subfigure}[b]{0.5\linewidth}
	\centering
	\includegraphics[width=1.0\linewidth]{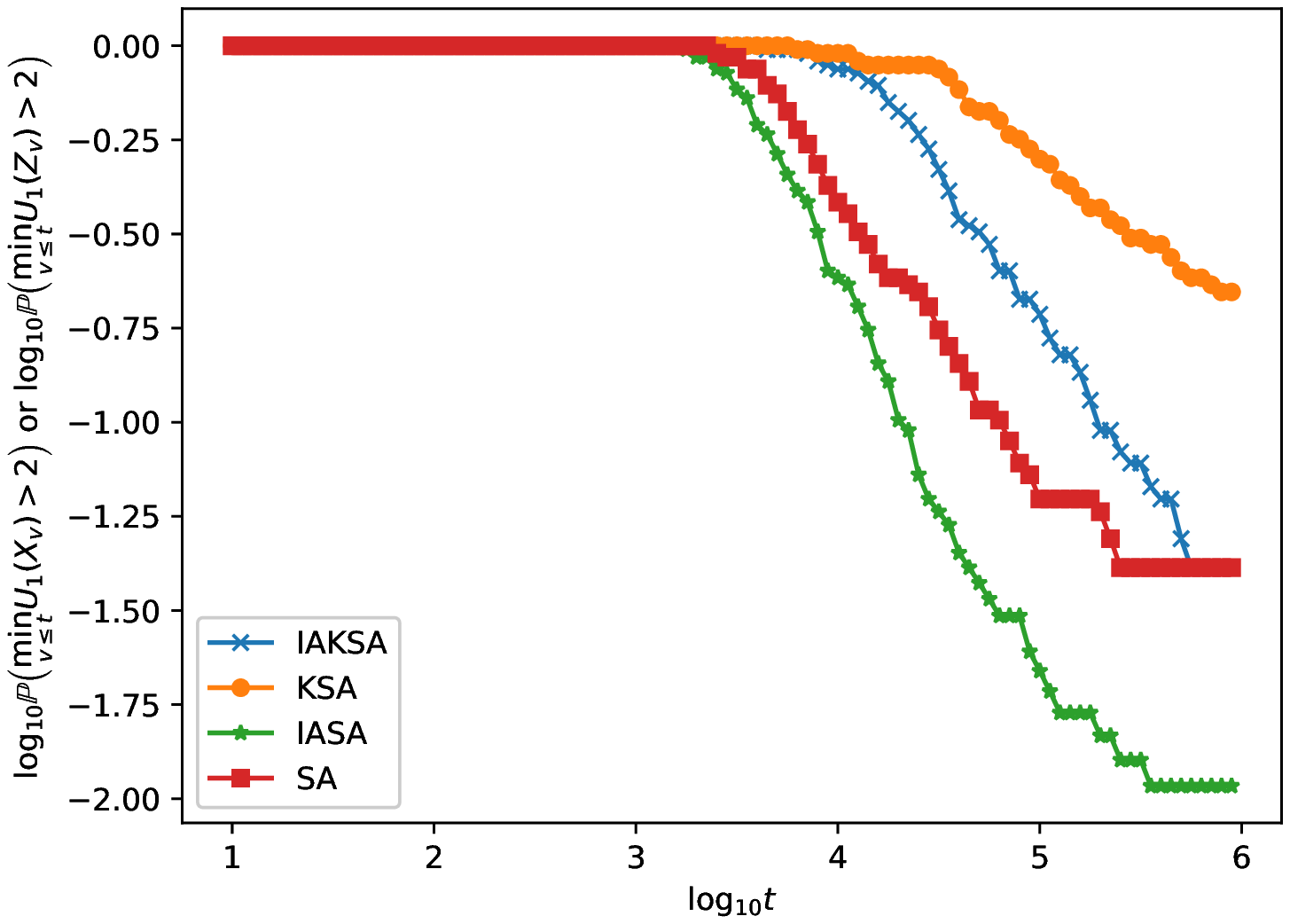} 
	\caption{Ackley function $U_1$: running minimum} 
	\label{fig:e} 
	\end{subfigure}
	\begin{subfigure}[b]{0.5\linewidth}
		\centering
		\includegraphics[width=1.0\linewidth]{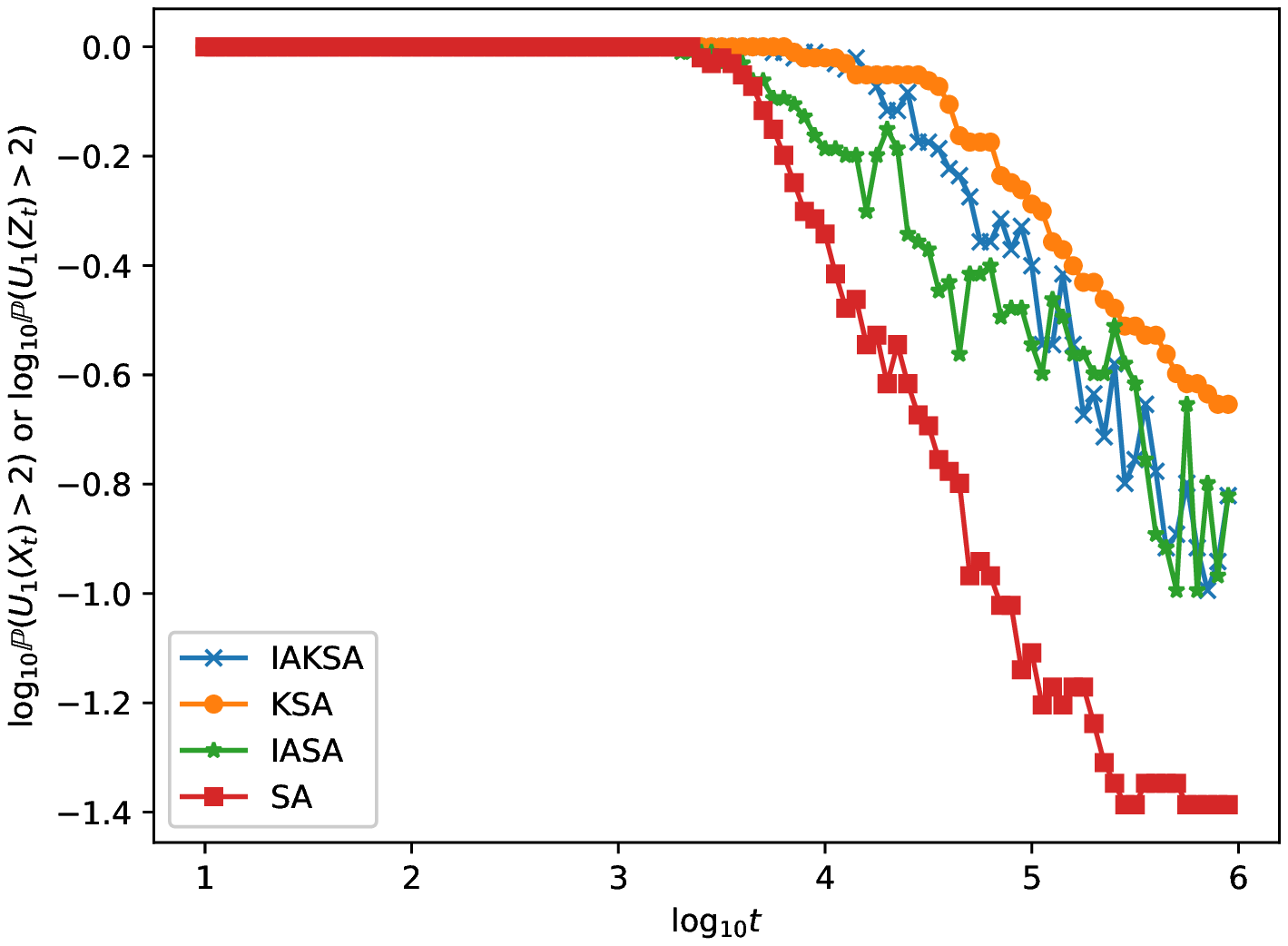} 
		\caption{Ackley function $U_1$: $U_1(X_t)$ or $U_1(Z_t)$} 
		\label{fig:f} 
	\end{subfigure} 
	\caption{Probability plots of the four annealing methods on three benchmark functions}
	\label{fig:main} 
\end{figure}

\subsection{Organization of the paper}

The rest of this paper is organized as follow. In Section \ref{sec:proofmain1}, we present the proof of Theorem \ref{thm:main1}. In Section \ref{sec:adapt}, we state the adaptive method IAKSA along with its proof.

\section{Proof of Theorem \ref{thm:main1}}\label{sec:proofmain1}

In this section, we prove the convergence of the improved kinetic dynamics with fixed $c$ and energy level $E$. We employ the same proof strategy as in \cite{M18} and break down the proof into smaller parts. Finally, in Section \ref{subsec:finish} we connect the auxiliary results and finish off the proof of Theorem \ref{thm:main1}.

\subsection{Weak convergence of $\mu_{\epsilon}$}\label{subsec:weak}

In this subsection, we prove that $\mu_{\epsilon}$ converges weakly to the set of global minima of $U$ as $\epsilon \to 0$. Recall that $\mu_{\epsilon}$ is first introduced in \eqref{eq:mueps} as the stationary distribution of the improved annealing ISA method. Note that similar result have been obtained in \cite{FQG97}.

\begin{proposition}\label{prop:Pstatbd}
	Under Assumption \ref{assump:main}, for a fixed $\epsilon > 0$, suppose the law of $X$ is $\mu_{\epsilon}$. Then for sufficiently small $\delta > 0$ with $\delta \in (0,c-U_{min})$, there exists a constant $D_2 = D_2(\delta) > 0$, independent of $\epsilon$, such that
	$$\mathbb{P}\left(U(X) > U_{min} + \delta\right) \leq D_2 e^{-\frac{\delta}{2\epsilon}}.$$
\end{proposition}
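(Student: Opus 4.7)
The plan is to estimate the ratio
$$\mathbb{P}(U(X) > U_{min}+\delta) = \dfrac{\int_{\{U > U_{min}+\delta\}} e^{-H_\epsilon(x)}\,dx}{\int_{\mathbb{R}^d} e^{-H_\epsilon(x)}\,dx}$$
using the explicit form \eqref{eq:mueps}--\eqref{eq:Heps} and to bound numerator and denominator separately in a way that the $\epsilon$-dependent prefactors cancel. The key structural observation is that since $\delta \in (0, c - U_{min})$, for every $u \in [U_{min}, U_{min}+\delta]$ one has $(u-c)_+ = 0$, hence $f((u-c)_+) = f(0) = 0$ by Assumption \ref{assump:main}.\ref{it:assumpf}. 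The integrand defining $H_\epsilon$ therefore equals $1/\epsilon$ on this initial slice, which is exactly what produces the exponential gain $e^{-\delta/\epsilon}$.

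For the numerator I split $\{U > U_{min}+\delta\}$ into the bounded sublevel piece $A_1 = \{U_{min}+\delta < U \leq c\}$ and the unbounded tail $A_2 = \{U > c\}$. On $A_1$ one still has $f((U(x)-c)_+) = 0$, so $H_\epsilon(x) = (U(x)-U_{min})/\epsilon + \ln \epsilon$ and thus $e^{-H_\epsilon(x)} \leq \epsilon^{-1} e^{-\delta/\epsilon}$; since $A_1 \subset \{U \leq c\}$ has Lebesgue volume bounded by an $\epsilon$-independent constant, by the coercivity $U(x) \geq a_1 \norm{x}^2 - M$ of Assumption \ref{assump:main}.\ref{it:assumpU}, the $A_1$ contribution is at most $C_1 \epsilon^{-1} e^{-\delta/\epsilon}$. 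On $A_2$ the integral defining $H_\epsilon$ decomposes as $(c-U_{min})/\epsilon + \int_c^{U(x)} 1/(f((u-c)_+)+\epsilon)\,du$, which, using $\delta \leq c - U_{min}$ and $f \leq M_4$ from Assumption \ref{assump:main}.\ref{it:assumpf}, is at least $\delta/\epsilon + (U(x)-c)/(M_4+\epsilon)$; combined with $\ln(f+\epsilon) \geq \ln \epsilon$ this yields $e^{-H_\epsilon(x)} \leq \epsilon^{-1} e^{-\delta/\epsilon} e^{-(U(x)-c)/(M_4+\epsilon)}$, and the remaining integral reduces via the quadratic lower bound on $U$ to a Gaussian of width $\sim \sqrt{(M_4+\epsilon)/a_1}$ that is bounded uniformly in $\epsilon \in (0,1]$. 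Collecting the pieces gives a numerator bound $C \epsilon^{-1} e^{-\delta/\epsilon}$ with $C$ independent of $\epsilon$.

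For the denominator I restrict integration to the fixed sublevel set $B = \{U \leq U_{min}+\delta/2\}$. On $B$ one has $U \leq U_{min}+\delta/2 < c$, so $f((U-c)_+)=0$ and $e^{-H_\epsilon(x)} = \epsilon^{-1} e^{-(U(x)-U_{min})/\epsilon} \geq \epsilon^{-1} e^{-\delta/(2\epsilon)}$. Smoothness and coercivity of $U$ imply $v := |B| > 0$ is a finite positive constant independent of $\epsilon$, so the denominator is at least $v\epsilon^{-1} e^{-\delta/(2\epsilon)}$. Taking the ratio, the $\epsilon^{-1}$ factors cancel and one obtains
$$\mathbb{P}(U(X) > U_{min}+\delta) \leq \dfrac{C}{v}\, e^{-\delta/(2\epsilon)},$$
which is the claim with $D_2 = C/v$. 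The main place requiring care is the bookkeeping of $\epsilon$-dependence in the Gaussian-type integral on $A_2$: the polynomial prefactor $(M_4+\epsilon)^{d/2}$ picked up there, together with the $\ln \epsilon$ inside $H_\epsilon$, must not alter the exponential rate. This is harmless because $M_4+\epsilon$ stays bounded on $(0,1]$, but it is where the only non-routine bookkeeping occurs.
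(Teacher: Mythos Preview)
Your proof is correct and follows essentially the same strategy as the paper: lower-bound the normalizing constant by restricting to the sublevel set $\{U \leq U_{min}+\delta/2\}$, upper-bound the numerator by splitting into a bounded piece and a tail where the integral in $H_\epsilon$ gives exponential decay, and take the ratio so the $\epsilon^{-1}$ prefactors cancel. The only cosmetic difference is that the paper splits the numerator along the moving boundary $U(x)=U_{min}+\delta+\|x\|$ and uses the cruder bound $f\leq M_4$ throughout, whereas you split along the level $U=c$ and exploit $f((U-c)_+)=0$ exactly on $A_1$; your version is arguably cleaner and yields the same rate.
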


\begin{proof}
	First, denote $a := U_{min} + \delta < c$. For any Borel set $S \subset \mathbb{R}^d$, we denote $\mathrm{Vol}(S)$ to be its Lebesgue volume. Note that an equivalent way of writing down $\mu_{\epsilon}$ is
	\begin{align*}
	\mu_{\epsilon}(x) &= \dfrac{1}{\overline{\Lambda}_{\mu_{\epsilon}}}  \dfrac{1}{f((U(x)-c)_+) + \epsilon} \exp\bigg\{-\int_{a}^{U(x)} \dfrac{1}{f((u-c)_+) + \epsilon}\, du \bigg\}, \\
	\overline{\Lambda}_{\mu_{\epsilon}} &= \int_{\mathbb{R}^d} \dfrac{1}{f((U(x)-c)_+) + \epsilon} \exp\bigg\{-\int_{a}^{U(x)} \dfrac{1}{f((u-c)_+) + \epsilon}\, du \bigg\}\,dx \\
	&\geq \dfrac{1}{\epsilon} \int_{\{U(x) \leq a - \delta/2\}} \exp\bigg\{\int_{U(x)}^{a} \dfrac{1}{f((u-c)_+) + \epsilon}\, du\bigg\}\, dx  \\
	&\geq \dfrac{1}{\epsilon} \int_{\{U(x) \leq a - \delta/2\}} \exp\bigg\{\dfrac{1}{\epsilon}(a - U(x))\bigg\}\, dx \\
	&\geq \dfrac{1}{\epsilon} e^{\frac{\delta}{2\epsilon}} \mathrm{Vol}(\{U(x) \leq a - \delta/2\}) =: \dfrac{1}{\epsilon} e^{\frac{\delta}{2\epsilon}} D_1,
	\end{align*}
	where the set $\{U(x) \leq a - \delta/2\}$ is compact as $U$ is quadratic at infinity. Define
	\begin{align*}
	A_{\delta}(x) := U_{min} + \delta + \norm{x}.
	\end{align*}
	Note that under Assumption \ref{assump:main}, since $U$ is quadratic at infinity, the set $S_{\delta} := \{U(x) < A_{\delta}(x)\}$ is compact. Therefore, we have
	\begin{align*}
	\mathbb{P}\left(U(X) > U_{min} + \delta\right) &= \dfrac{1}{\overline{\Lambda}_{\mu_{\epsilon}}} \int_{\{U(x) > U_{min} + \delta\}} \dfrac{1}{f((U(x)-c)_+) + \epsilon} \exp\bigg\{-\int_{a}^{U(x)} \dfrac{1}{f((u-c)_+) + \epsilon}\, du \bigg\} \,dx \\
	&\leq \dfrac{e^{-\frac{\delta}{2\epsilon}}}{D_1} \int_{\{A_{\delta}(x) > U(x) > U_{min} + \delta\}}  \exp\bigg\{-\dfrac{U(x) - a}{M_4 + \epsilon} \bigg\} \,dx \\
	&\quad + \dfrac{e^{-\frac{\delta}{2\epsilon}}}{D_1} \int_{\{U(x) > A_{\delta}(x)\}}  \exp\bigg\{-\dfrac{U(x) - a}{M_4 + \epsilon}  \bigg\} \,dx \\
	&\leq \dfrac{1}{D_1} \mathrm{Vol}(S_{\delta}) e^{-\frac{\delta}{2\epsilon}} + \dfrac{1}{D_1} e^{-\frac{\delta}{2\epsilon}} \int_{\mathbb{R}^d} e^{-\frac{1}{M_4 + \epsilon}\norm{x}} \,dx.
	\end{align*}
\end{proof}

\subsection{Existence and regularity for the density of IKSA}\label{subsec:density}

 First, we note that the infinitesimal generator of the improved kinetic dynamics $(X_t,Y_t)_{t \geq 0}$ \eqref{eq:improvedk} at a fixed temperature $\epsilon > 0$ is 
 \begin{align}\label{eq:Leps}
 L_{\epsilon} = L_{\epsilon,c} := y \cdot \nabla_x - \left(\dfrac{y}{\epsilon} + \epsilon \nabla_x H_{\epsilon}\right) \cdot \nabla_y + \Delta_y,
 \end{align}
 where we recall $H_{\epsilon} = H_{\epsilon,c}$ is first introduced in \eqref{eq:Heps}, and $\nabla_x \cdot$ (resp.~ $\nabla_y \cdot$) is the divergence operator with respect to the variable $x$ (resp.~ $y$). When there is no ambiguity on the parameter $c$, we simply hide the dependency on $c$ and write $H_{\epsilon} = H_{\epsilon,c}$ and $L_{\epsilon} = L_{\epsilon,c}$.
 
 We show that the density of $(X_t,Y_t)_{t \geq 0}$ is nice:
 
 \begin{proposition}
 	Under Assumption \ref{assump:main}, the process $(X_t,Y_t)_{t \geq 0}$ is well-defined and the second moment $\mathbb{E}(\norm{X_t}^2 + \norm{Y_t}^2)$ is finite for all $t$. The Lebesgue density of $(X_t,Y_t)$, denoted by $m_t$, is smooth and positive.
 	$$h_t := \dfrac{d m_t}{d \pi_{\epsilon_{t}}}$$
 	is well-defined and smooth.
 \end{proposition}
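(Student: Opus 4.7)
The plan is to verify four standard facts about the hypoelliptic kinetic SDE \eqref{eq:improvedk}, leveraging the structural properties laid out in Assumption \ref{assump:main}. First I would establish well-posedness by checking that the drift is globally Lipschitz. The key observation is that
\[
\nabla_x H_{\epsilon_t,c}(x) = \frac{1 + f'((U(x)-c)_+)}{f((U(x)-c)_+) + \epsilon_t}\,\nabla_x U(x),
\]
where the prefactor is uniformly bounded since $f \geq 0$ gives denominator $\geq \epsilon_t > 0$, $f' \leq M_5$ on the relevant range and $f' = 0$ outside $[0,M_3]$, and $f''$ is bounded. Combined with the bounded Hessian of $U$ (Assumption \ref{assump:main}(\ref{it:assumpU})), this makes the whole vector field $(y,-y/\epsilon_t - \epsilon_t\nabla_x H_{\epsilon_t,c}(x))$ globally Lipschitz with linear growth, so classical SDE theory yields a pathwise unique strong solution on $[0,\infty)$ driven by the Brownian motion $(B_t)$.

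Second, for moment bounds, I would apply It\^o's formula to a Lyapunov function of the standard kinetic type, e.g. $V(x,y) := \tfrac{1}{2}\|y\|^2 + \tfrac{\epsilon_t}{2}\,\text{something involving }U$, or more simply $V(x,y) = \|x\|^2 + \|y\|^2$ with an appropriate cross term $\lambda x\cdot y$ to handle the degeneracy. Using the dissipativity bound $-\nabla_x U(x)\cdot x \leq -r\|x\|^2 + M$ from Assumption \ref{assump:main}(\ref{it:assumpU}) together with the boundedness of the correction factor above, one obtains a Gronwall-type inequality of the form $\frac{d}{dt}\mathbb{E}[V(X_t,Y_t)] \leq -\kappa\,\mathbb{E}[V(X_t,Y_t)] + K_t$ for locally bounded $K_t$, which gives finiteness of the second moment for all $t$. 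The higher moment assumption on $m_0$ ensures the calculation is legitimate.

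Third, for smoothness and positivity of $m_t$, I would invoke H\"ormander's theorem. The generator \eqref{eq:Leps} has diffusion vector fields $V_i = \partial_{y_i}$, $i=1,\dots,d$, and drift $V_0 = y\cdot\nabla_x - (y/\epsilon_t + \epsilon_t\nabla_x H_{\epsilon_t})\cdot \nabla_y$. A direct bracket computation gives $[V_i,V_0] = \partial_{x_i} - \epsilon_t^{-1}\partial_{y_i}$, and together with the $V_i$ themselves these span $\mathbb{R}^{2d}$ at every point, so H\"ormander's condition holds uniformly. The associated Kolmogorov forward equation is therefore hypoelliptic and $m_t$ is $C^\infty$ on $\mathbb{R}^{2d}$ for every $t > 0$ (also at $t=0$ by the smoothness of $m_0$). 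Positivity follows either from Stroock--Varadhan's support theorem for hypoelliptic diffusions (the control problem is exactly solvable since one can steer $Y$ freely via the Brownian integral and then $X$ follows) or from a strong maximum principle for hypoelliptic parabolic PDEs.

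Fourth, since $\pi_{\epsilon_t}$ inherits smoothness and strict positivity from the smoothness of $U$ and $f$ through the explicit formula \eqref{eq:pieps}, the ratio $h_t = dm_t/d\pi_{\epsilon_t} = m_t/\pi_{\epsilon_t}$ is smooth and positive. The main obstacle in this proposition is really just the hypoelliptic smoothing step, as the drift is non-autonomous (through $\epsilon_t$) and only partially coercive; however, since the cooling is smooth and the factor multiplying $\nabla_x U$ stays bounded above and below away from zero on compact time intervals, none of the standard arguments break, and the rest reduces to routine kinetic-SDE bookkeeping parallel to \cite{M18}.
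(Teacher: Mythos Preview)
Your proposal is correct and follows the same overall scheme as the paper: Lyapunov control for non-explosion and moments, H\"ormander for smoothness, and controllability for positivity (the paper defers to \cite[Proposition~5]{M18} with $F_t(x,y)=\nabla_x H_{\epsilon_t}(x)+y/\epsilon_t$, which is the Stroock--Varadhan-type argument you describe). The only substantive difference is in the first two steps. Rather than checking a global Lipschitz bound, the paper takes the time-augmented Hamiltonian $\mathcal{H}(x,y,t)=H_{\epsilon_t}(x)-\ln\epsilon_t+\tfrac{\|y\|^2}{2\epsilon_t}+1$ as Lyapunov function; the transport terms $y\cdot\nabla_x H_{\epsilon_t}$ cancel exactly, so $\mathcal{L}\mathcal{H}\le C\mathcal{H}$ drops out in one line and handles non-explosion and the second moment together. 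Your route---global Lipschitz for existence, then a separate quadratic Lyapunov with a cross term $\lambda\,x\cdot y$ for moments---is equally valid here (the Lipschitz claim does hold, since $f'=f''=0$ outside the compact set $\{c\le U\le c+M_3\}$ forces $\nabla_x^2 H_{\epsilon_t}$ to be globally bounded). The Hamiltonian buys the paper cleaner algebra by avoiding the cross-term bookkeeping; your Lipschitz observation buys a slightly more elementary existence argument.
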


\begin{proof}
	We follow the same proof as in \cite[Proposition 4 and 5]{M18}. 
	
	First, we show the non-explosiveness and the finite second moment result. Consider the homogeneous Markov process $(X_t,Y_t,t)_{t \geq 0}$, its generator $\mathcal{L} = L_{\epsilon_{t}} + \partial_t$ and Hamiltonian function
	$$\mathcal{H}(x,y,t) = H_{\epsilon_{t}}(x) - \ln \epsilon_{t} + \dfrac{\norm{y}^2}{2 \epsilon_{t}} + 1.$$
	Note that for $s \leq t$:
	\begin{align*}
		\mathcal{L} \mathcal{H}(x,y,s) &= \dfrac{1}{\epsilon_s} - \dfrac{\norm{y}^{2}}{ \epsilon_s^2}-\epsilon_{s}^{\prime}\left(\int_{U_{min}}^{U(x)} \dfrac{1}{\left(f((u-c)_+) + \epsilon_s\right)^2}\, du +  \frac{\norm{y}^2}{2 \epsilon_{s}^2}\right) + \dfrac{\epsilon_{s}^{\prime}}{f((U(x)-c)_+) + \epsilon_{s}} - \dfrac{\epsilon_{s}^{\prime}}{\epsilon_{s}} \\
		&\leq C \mathcal{H}(x,y,s),
	\end{align*}
	where the constant $C$ depends the bounds of $\epsilon_{s}$ and its derivative on $[0,t]$. Using the Markov inequality and the Ito's formula, we have $\mathbb{E}(\mathcal{H}(X_t,Y_t)) < \infty$ which implies that the second moment is finite. 
	
	For the smoothness of $m_t$, one can readily check the Hormander's bracket condition is satisified.
	
	For the positiveness of $m_t$, we apply exactly the same argument as in Proposition $5$ of \cite{M18}, with $F_t(x,y)$ therein replaced by $F_{t}(x, y)=\nabla_{x} H_{\epsilon_{t}}(x) +\frac{y}{\epsilon_{t}}$.
\end{proof}
\subsection{The log-Sobolev inequality}\label{subsec:logsob}

In this subsection, we prove the log-Sobolev inequality for the improved kinetic Langevin process $(X_t, Y_t)_{t \geq 0}$. First, let us briefly recall the concept of critical height $E_*$ that frequently appears in various classical and modern work of the annealing literature \cite{J92,HS88,Miclo92AIHP}. For two points $x, y \in \mathbb{R}^{d},$ we write $\Gamma_{x,y}$ to be the set of $C^{1}$ parametric curves that start at $x$ and end at $y$. Given a target function $U$, the classical critical height $E_*$ of $U$ is then defined to be
\begin{align}\label{eq:E*}
E_* = E_*(U) := \sup _{x, y \in \mathbb{R}^{d}} \inf _{\gamma \in \Gamma_{x,y}}\left\{\sup _{t}\{U(\gamma(t))\}-U(x)-U(y)+\inf U\right\}.
\end{align}

One major motivation of the current work is the introduction of the function $f$ and the parameter $c$ that control the injection of the Gaussian noise into the system, which allows one to clip the critical height $E_*$ at an arbitrary level $c$, thus effectively reducing $E_*$. Precisely, for an arbitrary $\delta_1 >0$ and $c > U_{min}$, we define $c_*$ to be
\begin{align}\label{eq:c*}
c_* = c_*(U,c,\delta_1) := \sup _{x, y \in \mathbb{R}^{d}} \inf _{\gamma \in \Gamma_{x,y}}\left\{\sup _{t}\{U(\gamma(t)) \wedge (c + \delta_1) \}-U(x)\wedge c-U(y)\wedge c +\inf U\right\}.
\end{align}

For probability measure $\nu$ on $\mathbb{R}^{2d}$ and smooth positive function $h$ on $\mathbb{R}^{2d}$ such that $\nu(h) = 1$, we define the relative entropy and Fisher information to be respectively
\begin{align*}
	\mathrm{Ent}_{\nu}(h) &= \int h \ln h \, d \nu, \\
	I_{\nu}(h) &= \int \dfrac{|\nabla h|^2}{h}\, d \nu.
\end{align*}

\begin{proposition}\label{prop:logSobolev}
	Under Assumption \ref{assump:main} and suppose the temperature $\epsilon$ is fixed. For any arbitrary $\delta_1 > 0$ and positive smooth function $h$ with $\pi_{\epsilon}(h)=1$, there exists $c_* = c_*(U,c,\delta_1)$ and a polynomial function $p_{\delta_1}(1/\epsilon)$ (which may depend on $\delta_1$) such that 
	\begin{align*}
	\mathrm{Ent}_{\pi_{\epsilon}}(h) &\leq \max\left(\dfrac{\epsilon}{2},p_{\delta_1}(1/\epsilon) e^{\frac{c_*}{\epsilon}}\right)I_{\pi_{\epsilon}}(h).
	\end{align*}
\end{proposition}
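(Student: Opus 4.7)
The approach is to exploit the product structure of the target measure $\pi_\epsilon \propto e^{-H_\epsilon(x)}e^{-\|y\|^2/(2\epsilon)}$ and appeal to the tensorization property of log-Sobolev inequalities. Writing $\pi_\epsilon = \mu_\epsilon \otimes \gamma_\epsilon$ where $\gamma_\epsilon$ is the centered Gaussian on $\mathbb{R}^d$ with covariance $\epsilon I$, it suffices to prove LSIs for $\mu_\epsilon$ and $\gamma_\epsilon$ separately; the LSI constant for the product is then the maximum of the two. For $\gamma_\epsilon$, the classical Gross LSI for Gaussians immediately gives constant $\epsilon/2$, which accounts for the first term inside the $\max$ in the statement.

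For the position marginal $\mu_\epsilon \propto e^{-H_\epsilon}$, the plan is to cast this in the classical Gibbs form and apply a Miclo/Holley-Stroock-type bound in which the LSI constant is governed by the oscillation of an effective potential along optimal paths joining wells. Write $V_\epsilon(x) := \epsilon H_\epsilon(x)$, so that $\mu_\epsilon \propto e^{-V_\epsilon/\epsilon}$, and carry out a case analysis based on whether $U(x) \leq c$, $c < U(x) \leq c + \delta_1$, $c + \delta_1 < U(x) \leq c + M_3$, or $U(x) > c + M_3$. Using Assumption \ref{assump:main}(\ref{it:assumpf}) (boundedness of $f$ by $M_4$ and the plateau $f \equiv M_4$ past $M_3$), the integral representation of $H_\epsilon$ shows that for $U(x)$ beyond $c + \delta_1$ the contribution to $V_\epsilon$ grows only like $\epsilon (U(x)-c)/(M_4+\epsilon)$, so that $V_\epsilon$ essentially behaves as $(U \wedge (c+\delta_1))(x) - U_{min}$ modulo a remainder that is $\mathcal{O}(\epsilon \ln(1/\epsilon))$ plus a term linear in $U$ with coefficient $\mathcal{O}(\epsilon/M_4)$. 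Thus the effective critical height of $V_\epsilon$, in the sense of \eqref{eq:c*}, is precisely $c_*$.

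With this identification in hand, the final step is to invoke a weighted log-Sobolev inequality for Gibbs measures on $\mathbb{R}^d$ whose potential is quadratic at infinity (as in the standard references behind the classical SA analysis), suitably adapted to the explicit $\epsilon$-dependence of $V_\epsilon$. This yields an LSI constant of order $p_{\delta_1}(1/\epsilon) e^{c_*/\epsilon}$, where the polynomial prefactor absorbs the $\ln(1/\epsilon)$ factors, the polynomial growth at infinity, and the smoothness constants $M_3, M_4, M_5$. The $\delta_1$ appears because one must pay a multiplicative $e^{\delta_1/\epsilon}$ to treat the transition region $u \in [c, c+\delta_1]$ where the integrand $1/(f((u-c)_+)+\epsilon)$ is not yet of order $1/M_4$; the choice of any $\delta_1 > 0$ suffices, at the cost of that factor being absorbed into the exponent defining $c_*$.

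The main obstacle will be to control the $\epsilon$-dependence of $V_\epsilon$ carefully enough that the remainder terms contribute only a polynomial (not an exponential) prefactor. This hinges on the uniform bound $f \leq M_4$, on the vanishing $f(0)=f'(0)=f''(0)=0$, which ensures a smooth $C^2$ transition of $V_\epsilon$ across $\{U=c\}$, and on the quadratic-at-infinity bound $a_1 \|x\|^2 - M \leq U(x)$, which must be shown to transfer from $U/\epsilon$ to $H_\epsilon$ so that the requisite coercivity/Lyapunov condition for the underlying LSI on $\mathbb{R}^d$ is preserved; the tail estimate $\int^{U(x)} du/(M_4+\epsilon)$ provides the needed linear growth at infinity.
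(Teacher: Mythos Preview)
Your broad strategy---tensorize $\pi_\epsilon = \mu_\epsilon \otimes \gamma_\epsilon$, take the Gaussian constant $\epsilon/2$ for $\gamma_\epsilon$, and reduce to an LSI for $\mu_\epsilon$ governed by the critical height of a clipped potential---is the same as the paper's. The execution, however, diverges at two points.

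First, the paper does not write $\mu_\epsilon \propto e^{-V_\epsilon/\epsilon}$ with $V_\epsilon = \epsilon H_\epsilon$. Instead it drops the $\ln(f((U-c)_+)+\epsilon)$ term and works with
\[
V_\epsilon(x) = \int_{U_{min}}^{U(x)} \frac{du}{f((u-c)_+)+\epsilon},
\]
regarded as a potential at \emph{temperature $1$}. This is the natural object because the generator of the improved overdamped dynamics has Dirichlet form $\int (f((U-c)_+)+\epsilon)\|\nabla h\|^2\,d\mu_\epsilon$, which is within a bounded factor $(M_4+\epsilon)$ of the Dirichlet form of $\nu_\epsilon \propto e^{-V_\epsilon}$. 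The paper then checks Jacquot's conditions (A) and (B) for $V_\epsilon$ (which hold uniformly in $\epsilon$ because at infinity $\nabla V_\epsilon = \nabla U/(M_4+\epsilon)$), applies \cite{J92} to get the spectral gap of order $e^{E_*(V_\epsilon)}$, and proves the key estimate
\[
E_*(V_\epsilon) \leq \frac{c_*}{\epsilon} + C_{\delta_1,U},
\]
via the two-sided bounds $\frac{1}{\epsilon}(U\wedge c - U_{min}) \leq V_\epsilon \leq \frac{1}{\epsilon}(U\wedge(c+\delta_1)-U_{min}) + \frac{1}{f(\delta_1)}(U - U\wedge(c+\delta_1))$. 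This is essentially your case analysis, but phrased at temperature $1$ so that the classical critical-height machinery applies without any ``suitable adaptation''.

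Second, and more importantly, the paper does not pass directly from the critical-height estimate to the LSI. It goes through the spectral gap of the \emph{weighted} Dirichlet form and then invokes \cite[Theorem~4.4]{FQG97}, which is a result specific to the improved annealing measures $\mu_\epsilon$ and upgrades the spectral-gap estimate to an LSI with the same exponential rate and a polynomial prefactor. Your proposal to ``invoke a weighted log-Sobolev inequality \ldots\ suitably adapted to the explicit $\epsilon$-dependence of $V_\epsilon$'' is precisely the step that needs this external input: the standard Holley--Stroock/Miclo results are stated for a fixed potential, and your $V_\epsilon = \epsilon H_\epsilon$ has quadratic-growth constant of order $\epsilon$ at infinity, so a naive application would produce a prefactor you have not controlled. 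The paper's route sidesteps this by working with $V_\epsilon$ (not $\epsilon H_\epsilon$) at temperature $1$, where the coercivity constants are uniform, and by outsourcing the spectral-gap-to-LSI step to \cite{FQG97}. Your outline is not wrong, but this is the place where it is genuinely incomplete.
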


\begin{rk}
	In \cite{FQG97}, by introducing assumption on the behaviour of $f$ near $0$ (Assumption $(H4)$ therein), they demonstrate the log-Sobolev constant is of the order $\mathcal{O}(e^{\frac{c_*(U,c,0)}{\epsilon}})$, while in our Assumption \ref{assump:main}, we do not place such assumption on $f$, at the trade-off of introducing an error $\delta_1 > 0$ that appears in $p_{\delta_1}(1/\epsilon)$. Alternatively, we can insert extra assumptions and simply use the result obtained in \cite{FQG97} for the log-Sobolev inequality.
\end{rk}

\begin{proof}
	First, note that $\pi_{\epsilon}$ can be written as a tensor product of $\mu_{\epsilon}$ (for the $x$ coordinates) and a Gaussian distribution of mean $0$ with variance $\epsilon$ (for the $y$ coordinates). Since the log-Sobolev inequality tensorizes and is stable under perturbation (see e.g. the references as in \cite{M18,CKP20}), it suffices for us to determine the log-Sobolev constant of $\mu_{\epsilon}$.
	
	Let us recall the improved overdamped Langevin dynamics at temperature $\epsilon$ is described by the dynamics
	$$dZ_t = - \nabla U(Z_t)\,dt + \sqrt{2 \left(f(U(Z_t)-c)_+ +\epsilon\right)}\,dB_t,$$
	with generator $\overline{L}_{\epsilon}$ and $L^2(\mu_{\epsilon})$-spectral gap 
	$$\overline{\lambda}_2 = \inf_{h \in L^2(\mu_{\epsilon});~ \mu_{\epsilon}(h) = 0} \dfrac{\langle -\overline{L}_{\epsilon}h,h \rangle_{\mu_{\epsilon}}}{\langle h,h \rangle_{\mu_{\epsilon}}},$$
	where we write $\langle g,h \rangle_{\mu_{\epsilon}} = \int gh \, d \mu_{\epsilon}$ to be the inner product in $L^2(\mu_{\epsilon})$ for $g,h \in L^2(\mu_{\epsilon})$.
	Define
	$$V_{\epsilon}(x) :=  \int_{U_{min}}^{U(x)} \dfrac{1}{f((u-c)_+) + \epsilon}\, du, \quad \nu_{\epsilon}(x) \propto e^{-V_{\epsilon}(x)},$$
	Since $\{h \in L^2(\nu_{\epsilon});~ \nu_{\epsilon}(h) = 0\} \subset \{h \in L^2(\mu_{\epsilon});~ \mu_{\epsilon}(h) = 0\}$, we thus have
	$$\overline{\lambda}_2 \leq (M_4 + \epsilon) \inf_{h \in L^2(\nu_{\epsilon});~ \nu_{\epsilon}(h) = 0} \dfrac{\int \norm{\nabla h}^2\,d\nu_{\epsilon}}{\langle h,h \rangle_{\nu_{\epsilon}}}.$$
	Therefore, we can pretend our target function is $V_{\epsilon}$ at temperature $1$ in the classical Langevin dynamics, and utilize the result of \cite{J92} to conclude that, for a constant $A > 0$ independent of $\epsilon$,
	\begin{align}\label{eq:lambda2}
	\overline{\lambda}_2 \leq A e^{E_*(V_{\epsilon})}.
	\end{align}
	If we show that 
	\begin{align}\label{eq:E*V}
	E_*(V_{\epsilon}) \leq \frac{c_*}{\epsilon} + C_{\delta_1,U},
	\end{align}
	where $C_{\delta_1,U}$ is a constant that depends on $\delta_1$ and $U$ but not on $\epsilon$, the desired result follows from the above spectral gap estimate and \cite[Theorem $4.4$]{FQG97}.
	
	In order to apply the result of \cite{J92}, we check Condition $(A)$ and $(B)$ therein:
	\begin{itemize}
		\item Condition $(A)$: $\norm{\nabla_x V_{\epsilon}} \to \infty$ and $V_{\epsilon} \to \infty$ as $\norm{x} \to \infty$ since $U$ is quadratic at infinity.
		
		\item Condition $(B)$: Outside the ball of  $\{U(x) - c > M_3\}$,
		\begin{align*}
		\nabla_x V_{\epsilon}(x) &= \dfrac{1}{f((U(x)-c)_+) + \epsilon} \nabla_x U(x) = \dfrac{1}{M_4 + \epsilon} \nabla_xU(x) ,\\
		\Delta V_{\epsilon}(x) &= \dfrac{1}{M_4 + \epsilon} \Delta U(x),
		\end{align*}
		and so $\norm{\nabla_x V_{\epsilon}}^2 - \Delta V_{\epsilon}$ is bounded below outside the ball $\{U(x) - c > M_3\}$, and hence it is bounded below for all $x \in \mathbb{R}^d$.
	\end{itemize}
	
	Now, we observe that, for any $x \in \mathbb{R}^d$,
	\begin{align*}
	\inf V_{\epsilon} &= 0, \\
	V_{\epsilon}(x) &= \dfrac{1}{\epsilon}\left(U(x) \wedge c - U_{min} \right) + \int_{U(x) \wedge c}^{U(x)} \dfrac{1}{f((u-c)_+) + \epsilon} \, du \\
	&\geq \dfrac{1}{\epsilon}\left(U(x) \wedge c - U_{min} \right), \\
	V_{\epsilon}(x) &\leq \dfrac{1}{\epsilon}\left(U(x) \wedge (c+\delta_1) - U_{min} \right) + \dfrac{1}{f(\delta_1)}(U(x) - U(x) \wedge (c+\delta_1)). 
	\end{align*}
	Writing $B(0,R)$ to be the ball center at $0$ with sufficiently large radius $R$, we finally show \eqref{eq:E*V}:
	\begin{align*}
	E_*(V_{\epsilon}) &= \sup _{x, y \in \mathbb{R}^{d}} \inf _{\gamma \in \Gamma_{x,y}}\left\{\sup _{t}\{V_{\epsilon}(\gamma(t))\}-V_{\epsilon}(x)-V_{\epsilon}(y)\right\} \\
	&= \sup _{x, y \in B(0,R)} \inf _{\gamma \in \Gamma_{x,y}}\left\{\sup _{t}\{V_{\epsilon}(\gamma(t))\}-V_{\epsilon}(x)-V_{\epsilon}(y)\right\} \\
	&\leq \dfrac{c_*}{\epsilon} +  \sup_{x \in B(0,R)} \dfrac{1}{f(\delta_1)}(U(x) - U(x) \wedge (c+\delta_1)) =: \dfrac{c_*}{\epsilon} + C_{\delta_1,U}.
	\end{align*}
\end{proof}

\subsection{Lyapunov function and moment estimates}\label{subsec:moment}

The aim of this section is to prove the following moment estimate of $(X_t,Y_t)_{t \geq 0}$:
\begin{proposition}\label{prop:XtYtest}
	For any $p \in \mathbb{N}$, $\alpha > 0$ and large enough $t$ (which depends on $p, U, f$ and the temperature schedule $\epsilon_t$), there exist a constant $k$, independent of $t$, such that
	\begin{align*}
	\mathbb{E}\left(\norm{X_t}^2 + \norm{Y_t}^2\right)^p \leq k (1+t)^{\alpha}.
	\end{align*}
\end{proposition}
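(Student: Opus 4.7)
The plan is to follow the Lyapunov function strategy of Proposition 11 of \cite{M18}, adapted to the modified force field of IKSA. I would work with a twisted-quadratic Lyapunov function
$$\Phi(x,y) = C_0 + a\norm{x}^2 + b\norm{y}^2 + \eta\, x\cdot y,$$
with constants $a,b>0$ and $|\eta|$ small enough for positive definiteness, so that $\Phi(x,y)\geq \tfrac12(1+\norm{x}^2+\norm{y}^2)$. The goal is to derive an expectation estimate for $\Phi^p$, which by comparison will immediately give the claim.

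The first step is to exploit the structural identity $\epsilon_t\nabla_x H_{\epsilon_t}(x) = g(x,\epsilon_t)\nabla_x U(x)$ with $g(x,\epsilon_t) := \epsilon_t(1+f'((U(x)-c)_+))/(f((U(x)-c)_+)+\epsilon_t)$. Under Assumption \ref{assump:main}, the scalar prefactor $g(x,\epsilon_t)$ lies in $[\epsilon_t/(M_4+\epsilon_t),\,1+M_5]$ for every $x$ and $\epsilon_t$. Hence the dissipativity of $U$ transfers to $H_{\epsilon_t}$: outside the compact set where $\nabla_x U\cdot x$ can be negative, one has
$$x\cdot\epsilon_t\nabla_x H_{\epsilon_t}(x) = g(x,\epsilon_t)\,x\cdot\nabla_x U(x)\geq \frac{\epsilon_t}{M_4+\epsilon_t}\bigl(r\norm{x}^2 - M\bigr),$$
giving a weak but positive position dissipation of order $\epsilon_t$. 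The bounded Hessian assumption gives $\norm{\nabla_x U(x)}\leq C(1+\norm{x})$, and the velocity SDE in \eqref{eq:improvedk} contributes a strong velocity dissipation $-\norm{y}^2/\epsilon_t$ in the generator \eqref{eq:Leps}. Applying $L_{\epsilon_t}$ to $\Phi$ and using Young's inequality to absorb the indefinite-sign cross terms (principally $(2a-\eta/\epsilon_t)x\cdot y$ and $2b\,g\,y\cdot\nabla_x U$) into the velocity dissipation---splitting the term $-2b\norm{y}^2/\epsilon_t$ into separately-sized pieces, one for each cross term---one reaches
$$L_{\epsilon_t}\Phi(x,y)\leq -\kappa(\epsilon_t)\,\Phi(x,y) + K(\epsilon_t)$$
with $\kappa(\epsilon_t) = \Omega(\epsilon_t)$ and $K(\epsilon_t) = O(1)$ as $\epsilon_t\to 0$.

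For arbitrary $p$, I would use the generator chain rule
$$L_{\epsilon_t}(\Phi^p) = p\,\Phi^{p-1}L_{\epsilon_t}\Phi + p(p-1)\Phi^{p-2}\norm{\nabla_y\Phi}^2,$$
noting that $\norm{\nabla_y\Phi}^2 = \norm{2by + \eta x}^2\leq C'\Phi$, and a final Young step to absorb $\Phi^{p-1}$ back into a fraction of $\Phi^p$ plus a constant. This produces $L_{\epsilon_t}(\Phi^p)\leq -\tfrac{p}{2}\kappa(\epsilon_t)\Phi^p + K_p(\epsilon_t)$ with $K_p(\epsilon_t) = O(\epsilon_t^{-(p-1)})$. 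Taking expectations via Dynkin's formula (legitimate by the existence and regularity established in Section \ref{subsec:density} and the moment hypothesis on $m_0$ in Assumption \ref{assump:main}) yields a differential inequality whose integration, with integrating factor $\exp(\tfrac{p}{2}\int_0^t\kappa(\epsilon_s)\,ds)$, gives $\mathbb{E}[\Phi^p(X_t,Y_t)] = O\bigl(\sup_{s\leq t}K_p(\epsilon_s)/\kappa(\epsilon_s)\bigr) = O((\ln t)^{p})$ under the logarithmic schedule. Since $(\ln t)^{p} = O((1+t)^\alpha)$ for any $\alpha>0$, the claim follows upon recalling $\Phi\gtrsim 1+\norm{x}^2+\norm{y}^2$.

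The main technical obstacle is that the position dissipation inherited from $H_{\epsilon_t}$ is only of order $\epsilon_t$, while the cross term $-(\eta/\epsilon_t)x\cdot y$ in $L_{\epsilon_t}\Phi$ scales like $\epsilon_t^{-1}$; a crude Young against $\norm{x}^2$ would produce a $\norm{x}^2/\epsilon_t^2$ contribution that swamps the position dissipation. The remedy is to carefully apportion the velocity dissipation $-2b\norm{y}^2/\epsilon_t$ among the various cross terms with Young parameters tuned to leave residual $\norm{x}^2$-coefficients of the correct order $\epsilon_t$; alternatively one may let $\eta = \eta(\epsilon_t)$ depend mildly on $\epsilon_t$, in which case the cooling-schedule assumption $|\partial_t\epsilon_t| = O(1/t)$ in Assumption \ref{assump:main} guarantees that the extra contribution $\partial_t\Phi\cdot\dot\epsilon_t = O(1/t)\cdot\Phi$ is time-integrable and can be folded into the drift bound without changing the conclusion.
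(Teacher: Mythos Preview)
Your choice of Lyapunov function misses the Hamiltonian cancellation that drives the argument in both \cite{M18} and the present paper. Here the potential part of $R_\epsilon$ in \eqref{eq:Reps} is $\epsilon H_\epsilon(x)$, not a bare quadratic: because $L_\epsilon(\epsilon H_\epsilon)=\epsilon\, y\cdot\nabla_x H_\epsilon$ exactly cancels the force contribution $-\epsilon\,y\cdot\nabla_x H_\epsilon$ in $L_\epsilon(\tfrac12\norm{y}^2)$, one obtains the clean identity \eqref{eq:Lepsfirsttwo}, namely $L_\epsilon\bigl(\epsilon H_\epsilon+\tfrac12\norm{y}^2\bigr)=-\norm{y}^2/\epsilon+d$. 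With $a\norm{x}^2$ in place of $\epsilon H_\epsilon$ you lose this cancellation and are left with the stray term $-2b\,g(x,\epsilon)\,y\cdot\nabla_x U$ in $L_\epsilon\Phi$, which is of size $O\bigl((1+\norm{x})\norm{y}\bigr)$ uniformly in $\epsilon$ since $g\leq 1+M_5$.

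This stray term creates a scaling incompatibility that neither of your remedies resolves. Absorbing it via Young into a portion $-\lambda\norm{y}^2$ of the velocity dissipation (with $\lambda\leq 2b/\epsilon$) leaves an $\norm{x}^2$-residual of order $\Omega(1/\lambda)\geq\Omega(\epsilon)$; for the position dissipation $-\eta g\,x\cdot\nabla_x U$, which is only $O(\eta\epsilon)\norm{x}^2$, to dominate this residual you must keep $\eta$ bounded below by a positive constant. But then the friction cross term $-(\eta/\epsilon)x\cdot y$ has coefficient $\Omega(1/\epsilon)$, and for any allocation $\lambda'\leq 2b/\epsilon$ the Young residual $(\eta/\epsilon)^2/(4\lambda')\geq\eta^2/(8b\epsilon)$ on $\norm{x}^2$ is $\Omega(1/\epsilon)$, swamping the $O(\epsilon)$ position dissipation. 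Taking $\eta=\eta(\epsilon_t)\to 0$ only makes matters worse, since the position dissipation then decays faster than the friction cross term is tamed. The paper instead takes $R_\epsilon=\epsilon H_\epsilon(x)+\tfrac12\norm{y}^2+\epsilon^3 x\cdot y$: the Hamiltonian choice eliminates the stray force term outright, after which the small cross-coefficient $\epsilon^3$ renders the friction contribution of size only $\epsilon^2|x\cdot y|$, easily absorbed (equations \eqref{eq:Lepsxy}--\eqref{eq:Lepsxyup}). The price is a slower contraction rate $c_4\epsilon^5$ in Proposition~\ref{prop:LepsReps} rather than the $\Omega(\epsilon)$ you claim, and an $\epsilon$-dependent Lyapunov function whose $\epsilon$-derivative must be tracked separately (Proposition~\ref{prop:derRup}); neither affects the final polylogarithmic bound.
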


We first prove the following Lyapunov property in Section \ref{subsubsec:Lya}, followed by proving Proposition \ref{prop:XtYtest} in Section \ref{subsubsec:XtYtest}.

\begin{proposition}[Lyapunov property of $R_{\epsilon}$]\label{prop:LepsReps}
	Let 
	\begin{align}\label{eq:Reps}
	R_{\epsilon}(x,y) = R_{\epsilon,c}(x,y) := \epsilon H_{\epsilon}(x) + \dfrac{\norm{y}^2}{2} + \epsilon^3  x \cdot y.
	\end{align}
	For $\epsilon > 1$, there exist constants $c_1, c_4$ and $c_5(\epsilon) = \mathcal{O}\left(\epsilon^3 \ln \left(1/\epsilon\right)\right)$ a subexponential function of $\epsilon$ such that
	$$L_{\epsilon}(R_{\epsilon}) \leq - c_4 \epsilon^5 R_{\epsilon} + c_5(\epsilon) + c_1.$$
\end{proposition}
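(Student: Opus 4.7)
The plan is to compute $L_{\epsilon} R_{\epsilon}$ explicitly from \eqref{eq:Leps} and \eqref{eq:Reps}, and then to control the resulting terms via the dissipativity of $U$ from Assumption \ref{assump:main}(\ref{it:assumpU}) and the boundedness of $f$ and $f^{\prime}$ from Assumption \ref{assump:main}(\ref{it:assumpf}).

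\textbf{Step 1 (explicit computation).} Using $\nabla_x R_{\epsilon} = \epsilon \nabla_x H_{\epsilon} + \epsilon^3 y$, $\nabla_y R_{\epsilon} = y + \epsilon^3 x$, and $\Delta_y R_{\epsilon} = d$, a direct substitution gives
\begin{align*}
L_{\epsilon} R_{\epsilon} = \left(\epsilon^3 - \frac{1}{\epsilon}\right) \norm{y}^2 - \epsilon^2 x \cdot y - \epsilon^4 x \cdot \nabla_x H_{\epsilon} + d,
\end{align*}
where the only nontrivial point is a single cancellation: the $\epsilon\, y \cdot \nabla_x H_{\epsilon}$ produced by $y \cdot \nabla_x R_{\epsilon}$ is exactly offset by $-\epsilon\, y \cdot \nabla_x H_{\epsilon}$ coming from $-\epsilon \nabla_x H_{\epsilon} \cdot \nabla_y R_{\epsilon}$. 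This cancellation is precisely what motivates introducing the cross term $\epsilon^3 x \cdot y$ with this specific coefficient.

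\textbf{Step 2 (dissipative bound for the gradient term).} I would write $\nabla_x H_{\epsilon} = \frac{1 + f^{\prime}((U-c)_+)}{f((U-c)_+) + \epsilon}\nabla_x U$ and partition $\mathbb{R}^d$ into the compact set $\{(U - c)_+ \leq M_3\}$ and its complement. On the complement, $f = M_4$ and $f^{\prime} = 0$, so the prefactor reduces to $1/(M_4 + \epsilon)$; combined with $-x \cdot \nabla_x U \leq -r \norm{x}^2 + M$, this yields a $-\frac{r \epsilon^4}{M_4 + \epsilon}\norm{x}^2$ contribution plus a bounded remainder. On the compact region the prefactor is at most $(1 + M_5)/\epsilon$ and $|\nabla_x U|$ is bounded, so $\epsilon^4 |x \cdot \nabla_x H_{\epsilon}|$ is only $\mathcal{O}(\epsilon^3)$ there. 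Altogether
\begin{align*}
-\epsilon^4 x \cdot \nabla_x H_{\epsilon} \leq -\frac{r \epsilon^4}{M_4 + \epsilon}\norm{x}^2 + C_1 \epsilon^3.
\end{align*}

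\textbf{Step 3 (Young's inequality and comparison with $\epsilon^5 R_{\epsilon}$).} A weighted Young's inequality absorbs the cross term via $|\epsilon^2 x \cdot y| \leq \frac{r \epsilon^4}{2(M_4+\epsilon)}\norm{x}^2 + \frac{M_4+\epsilon}{2 r}\norm{y}^2$, eating at most half of the favorable $\norm{x}^2$ coefficient while its $\norm{y}^2$ contribution is dominated by $(\epsilon^3 - 1/\epsilon)\norm{y}^2$ for $\epsilon$ small. Combining,
\begin{align*}
L_{\epsilon} R_{\epsilon} \leq -\frac{r \epsilon^4}{2(M_4+\epsilon)}\norm{x}^2 - \frac{1}{2\epsilon}\norm{y}^2 + C_2 \epsilon^3 + d.
\end{align*}
For the other side I would use $\epsilon H_{\epsilon} \leq U - U_{min} + \epsilon \ln(M_4 + \epsilon) \leq a_2 \norm{x}^2 + \mathcal{O}(1) + \mathcal{O}(\epsilon |\ln \epsilon|)$ together with Young on $\epsilon^3 x \cdot y$ to produce an upper bound $R_{\epsilon} \leq C_3(\norm{x}^2 + \norm{y}^2) + C_4 + C_5 \epsilon |\ln \epsilon|$. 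Choosing $c_4 > 0$ small enough that $c_4 C_3 \epsilon^5$ is dominated by both $\frac{r \epsilon^4}{4(M_4+\epsilon)}$ and $\frac{1}{4\epsilon}$ for all admissible $\epsilon$ then yields the claimed Lyapunov inequality, with the $\epsilon$-independent residues (including $d$) collected into $c_1$ and the subexponential pieces, namely the $C_2 \epsilon^3$ from Step 2 and the $c_4 \epsilon^5 |\ln \epsilon|$ arising from the logarithmic part of $R_{\epsilon}$, collected into $c_5(\epsilon) = \mathcal{O}(\epsilon^3 \ln(1/\epsilon))$.

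The \textbf{main obstacle} is Step 2: the prefactor $(1 + f^{\prime})/(f + \epsilon)$ appearing in $\nabla_x H_{\epsilon}$ oscillates between $\Theta(1/\epsilon)$ on $\{U \leq c\}$ and $\Theta(1)$ on $\{U \geq c + M_3\}$, so the dissipativity inequality from Assumption \ref{assump:main}(\ref{it:assumpU}) is directly useful only at large $\norm{x}$. The intermediate compact region must be handled by brute boundedness of $f$, $f^{\prime}$ and $\nabla_x U$, and the careful bookkeeping of powers of $\epsilon$ there is what produces the precise subexponential rate advertised in $c_5(\epsilon)$.
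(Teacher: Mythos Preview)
Your proposal is correct and follows essentially the same route as the paper. The only organizational differences are that the paper (i) applies $L_{\epsilon}$ to each summand $\epsilon H_{\epsilon}$, $\|y\|^2/2$, $x\cdot y$ separately before combining, (ii) states the upper/lower bounds on $H_{\epsilon}$ and $R_{\epsilon}$ as standalone Lemmas~\ref{lem:uplowH} and~\ref{lem:Repsup}, and (iii) routes the final comparison through $H_{\epsilon}$ (i.e.\ first converts the $-\|x\|^2$ term into $-\epsilon^5 H_{\epsilon}$ via Lemma~\ref{lem:uplowH}, then bounds $R_{\epsilon}$ in terms of $H_{\epsilon}$ and $\|y\|^2$) rather than directly through $\|x\|^2+\|y\|^2$ as you do; your direct route is slightly more economical and yields the same inequality.
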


\subsubsection{Proof of Proposition \ref{prop:LepsReps}}\label{subsubsec:Lya}
As we have
\begin{align*}
	L_{\epsilon}\left(\dfrac{\norm{y}^2}{2}\right) &= - \dfrac{\norm{y}^2}{\epsilon} - \epsilon \nabla_x H_{\epsilon} \cdot y + d, \\
	L_{\epsilon} (\epsilon H_\epsilon) &= y \cdot \epsilon \nabla_x H_{\epsilon},
\end{align*}
summing up these two equations leads to
\begin{align}\label{eq:Lepsfirsttwo}
	L_{\epsilon} \left(\epsilon H_\epsilon + \dfrac{\norm{y}^2}{2}\right) = - \dfrac{\norm{y}^2}{\epsilon} + d.
\end{align}
Note that we also have
\begin{align}\label{eq:Lepsxy}
	L_{\epsilon}(x \cdot y) = \norm{y}^2 - \dfrac{x \cdot y}{\epsilon} - \epsilon \nabla_x H_{\epsilon} \cdot x.
\end{align}
We proceed to give an upper bound on the second and the third term in the above equation. We first consider lower bounding $\nabla_x H_{\epsilon} \cdot x$:
\begin{align}\label{eq:gradHx}
	\nabla_x H_{\epsilon} \cdot x &= \dfrac{1 + f^{\prime}(U(x) - c)_+}{f((U(x)-c)_+) + \epsilon} \nabla_x U(x) \cdot x \nonumber \\
	&\geq \dfrac{1 + f^{\prime}(U(x) - c)_+}{f((U(x)-c)_+) + \epsilon} \left(r \norm{x}^2 - M\right) \nonumber\\
	&\geq \dfrac{r}{M_4 + \epsilon_0} \norm{x}^2 - \dfrac{(M_5+1)M}{\epsilon} := \overline{r}_2 \norm{x}^2 - \overline{M}_{\epsilon},
\end{align}
where the first inequality follows from Assumption \ref{assump:main} item \ref{it:assumpU}, and we recall both $M_4$ and $M_5$ are introduced in Assumption \ref{assump:main} item \ref{it:assumpf}. Next, as we have
\begin{align*}
	|x \cdot y| \leq \dfrac{\overline{r}_2 \epsilon^2}{2} \norm{x}^2 + \dfrac{1}{2 \overline{r}_2 \epsilon^2} \norm{y}^2,
\end{align*}
dividing by $\epsilon$ leads to
\begin{align}\label{eq:-xyeps}
	- \dfrac{x \cdot y}{\epsilon} \leq \dfrac{|x \cdot y|}{\epsilon} \leq \dfrac{\overline{r}_2 \epsilon}{2} \norm{x}^2 + \dfrac{1}{2 \overline{r}_2 \epsilon^3} \norm{y}^2.
\end{align}
We substitute \eqref{eq:gradHx} and \eqref{eq:-xyeps} into \eqref{eq:Lepsxy} to yield
\begin{align}\label{eq:Lepsxyup}
	L_{\epsilon}(x \cdot y) &\leq \left(1+\dfrac{1}{2 \overline{r}_2 \epsilon^3}\right)\norm{y}^2 - \dfrac{\epsilon}{2} \left( 2 \nabla_x H_{\epsilon} \cdot x - \overline{r}_2 \norm{x}^2\right) \nonumber\\
	&\leq  \left(1+\dfrac{1}{2 \overline{r}_2 \epsilon^3}\right)\norm{y}^2 - \dfrac{\epsilon}{2} \left( \overline{r}_2 \norm{x}^2 - 2 \overline{M}_{\epsilon}\right).
\end{align}

The next two results give upper and lower bounds for $H_{\epsilon}$ and $R_{\epsilon}$.

\begin{lemma}\label{lem:uplowH}
	For any $\delta > 0$, the upper bound of $H_{\epsilon}$ is 
	$$H_{\epsilon}(x) \leq \begin{cases} \dfrac{1}{\epsilon}\left(a_2 \norm{x}^2 + M - U_{min}\right) + \ln (M_4 + \epsilon_0), &\mbox{if } U(x) \leq c + \delta, \\
	\dfrac{1}{\epsilon}\left(c + \delta - U_{min}\right) + \dfrac{1}{f(\delta)}\left(a_2 \norm{x}^2 + M - (c+\delta)\right) + \ln (M_4 + \epsilon_0), & \mbox{if }  U(x) > c + \delta. \end{cases}$$
	Consequently, we have
	$$\norm{x}^2 \geq \begin{cases} \dfrac{1}{a_2}\left(\epsilon \left(H_{\epsilon}(x) - \ln(M_4 + \epsilon_0)\right) - M + U_{min}\right), &\mbox{if } U(x) \leq c + \delta, \\
	 \dfrac{1}{a_2}\left(f(\delta) \left(H_{\epsilon}(x) - \dfrac{1}{\epsilon}\left(c+\delta-U_{min}\right) - \ln(M_4 + \epsilon_0)\right) + (c+\delta)- M\right), & \mbox{if }  U(x) > c + \delta. \end{cases}$$
	On the other hand, the lower bound of $H_{\epsilon}$ is 
	$$H_{\epsilon}(x) \geq \dfrac{1}{M_4 + \epsilon}\left(a_1 \norm{x}^2 - M - U_{min}\right) + \ln \epsilon.$$
	Consequently, we have
	$$\norm{x}^2 \leq \dfrac{1}{a_1} \left((M_4 + \epsilon)H_{\epsilon}(x) - (M_4 + \epsilon) \ln \epsilon + M + U_{min}\right).$$
\end{lemma}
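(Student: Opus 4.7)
The plan is to directly estimate both summands in
\[
H_{\epsilon}(x) = \int_{U_{min}}^{U(x)} \dfrac{1}{f((u-c)_+) + \epsilon}\, du + \ln \bigl(f((U(x)-c)_+) + \epsilon \bigr)
\]
using monotonicity and the cap $f \leq M_4$, and then invert the resulting inequalities for $H_\epsilon$ into bounds on $\norm{x}^2$ using the quadratic-at-infinity control on $U$ from Assumption \ref{assump:main}(\ref{it:assumpU}).

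For the upper bound, I would split according to whether $U(x) \leq c + \delta$. In the first case, the integrand is pointwise bounded by $1/\epsilon$ on the whole range $[U_{min}, U(x)]$, so the integral is at most $\epsilon^{-1}(U(x) - U_{min}) \leq \epsilon^{-1}(a_2 \norm{x}^2 + M - U_{min})$ by the upper quadratic bound on $U$; the logarithmic term contributes at most $\ln(M_4 + \epsilon_0)$ since $f \leq M_4$ and $\epsilon \leq \epsilon_0$. In the second case $U(x) > c + \delta$, I would split the integral at $c+\delta$: on $[U_{min}, c+\delta]$ the same $1/\epsilon$ bound gives contribution $\epsilon^{-1}(c+\delta - U_{min})$, while on $[c+\delta, U(x)]$ the monotonicity $f((u-c)_+) \geq f(\delta)$ yields an integrand bounded by $1/f(\delta)$ and hence contribution $f(\delta)^{-1}(U(x) - (c+\delta)) \leq f(\delta)^{-1}(a_2 \norm{x}^2 + M - (c+\delta))$; again the log term is at most $\ln(M_4+\epsilon_0)$. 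The stated lower bounds on $\norm{x}^2$ then follow by isolating $\norm{x}^2$ in each of the two resulting inequalities.

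For the lower bound on $H_\epsilon$, I would use $f((u-c)_+) \leq M_4$ uniformly to get $1/(f((u-c)_+) + \epsilon) \geq 1/(M_4 + \epsilon)$, whence the integral is at least $(M_4+\epsilon)^{-1}(U(x) - U_{min}) \geq (M_4+\epsilon)^{-1}(a_1 \norm{x}^2 - M - U_{min})$ by the lower quadratic bound on $U$. The log term is at least $\ln \epsilon$ since $f \geq 0$. Rearranging for $\norm{x}^2$ gives the claimed upper bound.

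I do not expect a real obstacle: every step is either a monotonicity/boundedness estimate on $f$ or a direct application of the quadratic bounds on $U$. The only careful bookkeeping is to keep the case split at $c + \delta$ consistent in both the $H_\epsilon$ bound and its rearranged $\norm{x}^2$ form, and to remember that the constant $\epsilon_0$ serves as a uniform upper bound on the cooling schedule used to replace the $\ln(M_4 + \epsilon)$ term by a $\delta$- and $\epsilon_0$-dependent constant.
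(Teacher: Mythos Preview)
Your proposal is correct and follows essentially the same approach as the paper's proof: the same case split at $c+\delta$, the same pointwise bounds $1/\epsilon$ and $1/f(\delta)$ on the integrand via monotonicity and boundedness of $f$, the same handling of the logarithmic term via $0 \le f \le M_4$, and the same use of the quadratic bounds on $U$ to pass to $\norm{x}^2$. There is nothing to add.
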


\begin{proof}
	We shall only prove the bounds on $H_{\epsilon}$, and from these bounds it is straightforward to deduce the bounds on $\norm{x}^2$. We first prove the upper bound of $H_{\epsilon}$. If $U(x) \leq c + \delta$, we have
	$$H_{\epsilon}(x) \leq \dfrac{1}{\epsilon}(U(x) - U_{min}) + \ln (M_4 + \epsilon_0) \leq \dfrac{1}{\epsilon}\left(a_2 \norm{x}^2 + M - U_{min}\right) + \ln (M_4 + \epsilon_0),$$
	where the second inequality follows from the assumption on $U$ in Assumption \ref{assump:main}.  If $U(x) > c + \delta$,
	\begin{align*}
		H_{\epsilon}(x) &= \int_{U_{min}}^{c + \delta} \dfrac{1}{f((u-c)_+) + \epsilon} \, du + 
		\int_{c + \delta}^{U(x)} \dfrac{1}{f((u-c)_+) + \epsilon} \, du + \ln \left(f((U(x)-c)_+) + \epsilon \right) \\
		&\leq 	\dfrac{1}{\epsilon}\left(c + \delta - U_{min}\right) + \dfrac{1}{f(\delta)}\left(a_2 \norm{x}^2 + M - (c+\delta)\right) + \ln (M_4 + \epsilon_0),
	\end{align*}
	where the inequality follows from Assumption \ref{assump:main} again. For the lower bound of $H_{\epsilon}$, we consider
	$$H_{\epsilon}(x) \geq \dfrac{1}{M_4 + \epsilon}(U(x) - U_{min}) + \ln \epsilon \geq \dfrac{1}{M_4 + \epsilon}\left(a_1 \norm{x}^2 - M - U_{min}\right) + \ln \epsilon,$$
	where the second inequality follows from Assumption \ref{assump:main}.
\end{proof}

\begin{lemma}\label{lem:Repsup}
	For the upper bound of $R_{\epsilon}$, we have
	$$R_{\epsilon}(x,y) \leq \left(\epsilon + \epsilon^3 \dfrac{M_4 + \epsilon}{a_1}\right) H_{\epsilon}(x) + \left(\dfrac{1}{2} + \dfrac{\epsilon^3}{2}\right)\norm{y}^2 + \dfrac{\epsilon^3}{a_1} \left(- (M_4 + \epsilon) \ln \epsilon + M + U_{min}\right).$$
	On the other hand, the lower bound of $R_{\epsilon}$ is
	$$R_{\epsilon}(x,y) \geq \dfrac{\epsilon}{M_4 + \epsilon}\left(a_1 \norm{x}^2 - M - U_{min}\right) + \epsilon \ln \epsilon + \dfrac{\norm{y}^2}{2} - \epsilon^3 \left(\norm{x}^2/2 + \norm{y}^2/2\right).$$
\end{lemma}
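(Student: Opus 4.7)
The plan is to bound the cross term $\epsilon^{3} x \cdot y$ in $R_{\epsilon}$ by Young's inequality and then feed in the bounds from Lemma \ref{lem:uplowH}. For the upper bound, I would first write
\[
\epsilon^{3} x \cdot y \,\leq\, \frac{\epsilon^{3}}{2}\|x\|^{2} + \frac{\epsilon^{3}}{2}\|y\|^{2},
\]
which combined with the definition \eqref{eq:Reps} yields
\[
R_{\epsilon}(x,y) \,\leq\, \epsilon H_{\epsilon}(x) + \Bigl(\tfrac{1}{2} + \tfrac{\epsilon^{3}}{2}\Bigr)\|y\|^{2} + \tfrac{\epsilon^{3}}{2}\|x\|^{2}\cdot 2.
\]
Then the key step is to replace $\|x\|^{2}$ by the lower-bound-on-$H_{\epsilon}$ inversion from Lemma \ref{lem:uplowH}, namely
\[
\|x\|^{2} \,\leq\, \frac{1}{a_{1}}\bigl((M_{4}+\epsilon)H_{\epsilon}(x) - (M_{4}+\epsilon)\ln\epsilon + M + U_{min}\bigr).
\]
Substituting this into the previous display and collecting the $H_{\epsilon}(x)$ coefficients gives the stated upper bound with coefficient $\epsilon + \epsilon^{3}(M_{4}+\epsilon)/a_{1}$ in front of $H_{\epsilon}(x)$, coefficient $(1+\epsilon^{3})/2$ in front of $\|y\|^{2}$, and remainder $(\epsilon^{3}/a_{1})(-(M_{4}+\epsilon)\ln\epsilon + M + U_{min})$, which is exactly what is claimed.

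For the lower bound, I would use the reverse Young's inequality $\epsilon^{3} x \cdot y \geq -\tfrac{\epsilon^{3}}{2}(\|x\|^{2}+\|y\|^{2})$, so that
\[
R_{\epsilon}(x,y) \,\geq\, \epsilon H_{\epsilon}(x) + \frac{\|y\|^{2}}{2} - \epsilon^{3}\bigl(\|x\|^{2}/2 + \|y\|^{2}/2\bigr).
\]
Then I plug in the lower bound $H_{\epsilon}(x) \geq \frac{1}{M_{4}+\epsilon}(a_{1}\|x\|^{2} - M - U_{min}) + \ln\epsilon$ from Lemma \ref{lem:uplowH} to obtain the announced expression. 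No $\|x\|^{2}$ elimination is necessary on this side because the statement keeps an explicit $\|x\|^{2}$ term; one just checks that the $\epsilon H_{\epsilon}$ piece expands correctly.

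There is no real obstacle here: the lemma is a straightforward bookkeeping exercise that converts the bounds on $H_{\epsilon}$ (which are quadratic in $\|x\|^{2}$ with explicit constants) into bounds on $R_{\epsilon}$, the only nontrivial input being the Young-type control of the indefinite cross term $\epsilon^{3} x \cdot y$. The mildly delicate point worth checking is the direction of the inequalities: the upper bound of $R_{\epsilon}$ needs the \emph{lower} bound of $H_{\epsilon}$ (to trade $\|x\|^{2}$ for $H_{\epsilon}(x)$ in the right direction), whereas the lower bound of $R_{\epsilon}$ needs the \emph{lower} bound of $H_{\epsilon}$ as well, but keeps the $-\epsilon^{3}\|x\|^{2}/2$ term explicit so that the result can later be paired, in Section \ref{subsubsec:XtYtest}, with the Lyapunov inequality of Proposition \ref{prop:LepsReps} to extract moment estimates on $(X_{t},Y_{t})$.
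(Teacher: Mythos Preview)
Your approach is correct and is exactly what the paper does: the paper's proof is the one-liner ``Using $x\cdot y \leq \|x\|^2/2 + \|y\|^2/2$, the desired results follow from \eqref{eq:Reps} and Lemma \ref{lem:uplowH}.'' One small clean-up: the stray ``$\cdot 2$'' in your first display should not be there (after Young you get $\tfrac{\epsilon^3}{2}\|x\|^2$, not $\tfrac{\epsilon^3}{2}\|x\|^2\cdot 2$); carrying the honest factor $1/2$ through actually yields coefficients $\epsilon + \tfrac{\epsilon^3(M_4+\epsilon)}{2a_1}$ and $\tfrac{\epsilon^3}{2a_1}$, which are smaller than the stated ones, so the lemma's inequality still holds a fortiori (the difference is $\tfrac{\epsilon^3}{2a_1}\bigl[(M_4+\epsilon)(H_\epsilon-\ln\epsilon)+M+U_{min}\bigr]\geq \tfrac{\epsilon^3}{2}\|x\|^2\geq 0$ by the same lower bound on $H_\epsilon$).
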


\begin{proof}
	Using $x \cdot y \leq \norm{x}^2/2 + \norm{y}^2/2$, the desired results follow from \eqref{eq:Reps} and Lemma \ref{lem:uplowH}.
\end{proof}

Using Lemma \ref{lem:uplowH} together with \eqref{eq:Lepsxyup} leads to, for some constants $c_1, c_2, c_3$,
\begin{align*}
	\epsilon^3 L_{\epsilon}(x \cdot y) &\leq \left(\epsilon^3 + \dfrac{1}{2 \overline{r}_2}\right)\norm{y}^2 - c_3 \epsilon^5 H_{\epsilon}(x) + c_1,
\end{align*}
and hence, when combined with \eqref{eq:Lepsfirsttwo} and \eqref{eq:Reps},
$$ L_{\epsilon}(R_{\epsilon}) \leq - c_2 \epsilon^5 \norm{y}^2 - c_3 \epsilon^5 H_{\epsilon}(x) + c_1.$$

Finally, the above equation together with Lemma \ref{lem:Repsup} gives, for some constants $c_4$ and $c_5(\epsilon)$ a subexponential function of $\epsilon$,
$$L_{\epsilon}(R_{\epsilon}) \leq - c_4 \epsilon^5 R_{\epsilon}(x,y) + c_5(\epsilon) + c_1.$$

\subsubsection{Proof of Proposition \ref{prop:XtYtest}}\label{subsubsec:XtYtest}

\begin{proposition}\label{prop:derRup}
	For any $p \in \mathbb{N}$ and $\epsilon > 1$, there exist subexponential functions $C_{p,1}(\epsilon), C_{p,2}(\epsilon)$ such that
	$$\dfrac{\partial}{ \partial \epsilon} R_{\epsilon}^p \leq C_{p,1}(\epsilon) R_{\epsilon}^p +  C_{p,2}(\epsilon) R_{\epsilon}^{p-1}.$$
\end{proposition}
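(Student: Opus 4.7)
The plan is to reduce the claim to a pointwise bound on $\partial_\epsilon R_\epsilon$ via the chain rule
\[
\partial_\epsilon R_\epsilon^p = p R_\epsilon^{p-1} \partial_\epsilon R_\epsilon.
\]
Once I establish $\partial_\epsilon R_\epsilon \leq \tilde{C}_1(\epsilon) R_\epsilon + \tilde{C}_2(\epsilon)$ pointwise for some subexponential $\tilde{C}_1, \tilde{C}_2$, multiplying by $p R_\epsilon^{p-1}$ produces the target inequality with $C_{p,1} = p \tilde{C}_1$ and $C_{p,2} = p \tilde{C}_2$, using that $R_\epsilon^{p-1} \geq 0$ in the regime of interest (or, failing that, after a harmless shift $R_\epsilon \mapsto R_\epsilon + K$ absorbed into $\tilde{C}_2$).

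Differentiating \eqref{eq:Reps} gives
\[
\partial_\epsilon R_\epsilon = H_\epsilon + \epsilon\, \partial_\epsilon H_\epsilon + 3 \epsilon^2 x \cdot y,
\]
and I would bound the three summands in turn. Differentiating \eqref{eq:Heps} yields
\[
\partial_\epsilon H_\epsilon = -\int_{U_{min}}^{U(x)} \dfrac{1}{(f((u-c)_+) + \epsilon)^2}\, du + \dfrac{1}{f((U(x)-c)_+) + \epsilon},
\]
whose first summand is non-positive and whose second is at most $1/\epsilon$, so $\epsilon\, \partial_\epsilon H_\epsilon \leq 1$. For the term $3 \epsilon^2 x \cdot y$, Young's inequality $|x \cdot y| \leq (\norm{x}^2 + \norm{y}^2)/2$ combined with the lower bound of $R_\epsilon$ from Lemma \ref{lem:Repsup} gives $\norm{x}^2, \norm{y}^2 = \mathcal{O}(R_\epsilon/\epsilon)$ up to subexponential corrections. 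For $H_\epsilon$, chaining that same lower bound with the $\norm{x}^2$-versus-$H_\epsilon$ estimates in Lemma \ref{lem:uplowH} yields $H_\epsilon = \mathcal{O}(R_\epsilon/\epsilon)$ plus a subexponential remainder. Summing the three contributions produces the desired bound on $\partial_\epsilon R_\epsilon$, with the coefficients being at worst polynomial in $1/\epsilon$ together with $\ln(1/\epsilon)$, both of which are subexponential.

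The main obstacle will be the piecewise nature of the bounds in Lemma \ref{lem:uplowH}: on $\{U(x) > c + \delta\}$, the upper bound of $H_\epsilon$ switches from a term of order $1/\epsilon$ to a term involving the $\epsilon$-independent factor $1/f(\delta)$, while the lower bound of $R_\epsilon$ simultaneously contains a competing $-\epsilon^3 \norm{x}^2 / 2$ contribution. Verifying that these pieces combine without producing super-polynomial blow-up in $1/\epsilon$ demands a case analysis on whether $U(x) \gtrless c + \delta$ together with a dominance check ensuring that the net coefficient of $\norm{x}^2$ in the lower bound of $R_\epsilon$ remains strictly positive in the relevant $\epsilon$-regime. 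The delicate bookkeeping of tracking powers of $\epsilon$ through each sub-estimate is the step I anticipate requiring the most care.
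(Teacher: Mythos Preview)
Your proposal is correct and follows essentially the same route as the paper: the chain rule $\partial_\epsilon R_\epsilon^p = pR_\epsilon^{p-1}(H_\epsilon + \epsilon\,\partial_\epsilon H_\epsilon + 3\epsilon^2 x\cdot y)$, the bound $\epsilon\,\partial_\epsilon H_\epsilon \leq 1$, Young's inequality on $x\cdot y$, an upper bound $H_\epsilon \lesssim \epsilon^{-1}\norm{x}^2$, and finally the lower bound of $R_\epsilon$ in Lemma~\ref{lem:Repsup} to convert $\norm{x}^2+\norm{y}^2$ into $R_\epsilon/\epsilon$ plus subexponential remainders.

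The only place you overcomplicate matters is the anticipated case analysis on $\{U(x) \gtrless c+\delta\}$. The paper avoids it entirely: since $\frac{1}{f((u-c)_+)+\epsilon} \leq \frac{1}{\epsilon}$ holds for \emph{all} $u$, the first branch of the upper bound in Lemma~\ref{lem:uplowH}, namely $H_\epsilon(x) \leq \epsilon^{-1}(a_2\norm{x}^2 + M - U_{min}) + \ln(M_4+\epsilon_0)$, is valid unconditionally on $\mathbb{R}^d$, not only on $\{U(x)\leq c+\delta\}$. Using this single global bound removes the piecewise bookkeeping you flagged as the main obstacle; no $\delta$ ever enters the argument.
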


\begin{proof}
	For some constants $c_6, c_7 > 0$, we compute
	\begin{align*}
		\dfrac{\partial}{ \partial \epsilon} R_{\epsilon}^p &= p R_{\epsilon}^{p-1} \left(\epsilon \dfrac{\partial}{ \partial \epsilon} H_{\epsilon} + H_{\epsilon} + 3\epsilon^2 x \cdot y\right) \\
		&\leq p R_{\epsilon}^{p-1} \left(1 + \dfrac{1}{\epsilon}\left(a_2 \norm{x}^2 + M - U_{min}\right) + \ln (M_4 + \epsilon_0) + 3\epsilon^2 (\norm{x}^2/2 + \norm{y}^2/2)\right) \\
		&\leq p R_{\epsilon}^{p-1} \left(\dfrac{c_6}{\epsilon} (\norm{x}^2 + \norm{y}^2) + 1 + \dfrac{1}{\epsilon}\left(M - U_{min}\right) + \ln (M_4 + \epsilon_0) \right).
	\end{align*}
	Now, we use the lower bound of $R_{\epsilon}$ in Lemma \ref{lem:Repsup} to see that
	$$R_{\epsilon}(x,y) \geq c_7 \epsilon (\norm{x}^2 + \norm{y}^2) - \epsilon \dfrac{M+U_{min}}{M_4 + \epsilon}+ \epsilon \ln \epsilon,$$
	and so
	\begin{align*}
	\dfrac{\partial}{ \partial \epsilon} R_{\epsilon}^p 
	&\leq p R_{\epsilon}^{p-1} \left(\dfrac{c_6}{c_7 \epsilon^2} \left(R_{\epsilon}(x,y) + \epsilon \dfrac{M+U_{min}}{M_4 + \epsilon} - \epsilon \ln \epsilon\right) + 1 + \dfrac{1}{\epsilon}\left(M - U_{min}\right) + \ln (M_4 + \epsilon_0) \right) \\
	&=: C_{p,1}(\epsilon) R_{\epsilon}^p +  C_{p,2}(\epsilon) R_{\epsilon}^{p-1}.
	\end{align*}
\end{proof}

The carr\'e du champ operator $\Gamma_{\epsilon}$ of $L_{\epsilon}$ is 
$\Gamma_{\epsilon} f = \dfrac{1}{2} L_{\epsilon} f^2 - f L_{\epsilon} f = \norm{\nabla_y f}^2$. Using the lower bound of $R_{\epsilon}$ in Lemma \ref{lem:Repsup}, we compute, for constant $c_8$ and subexponential functions $C_3(\epsilon), C_4(\epsilon)$,
\begin{align}\label{eq:gammaepsReps}
	\Gamma_{\epsilon} R_{\epsilon} = \norm{\nabla_y R_{\epsilon}}^2 &= \norm{y + \epsilon^3 x}^2 \nonumber \\
	&\leq c_8 \left(\norm{x}^2 + \norm{y}^2\right) \nonumber \\
	&\leq \dfrac{c_8}{c_7 \epsilon} \left(R_{\epsilon}(x,y) + \epsilon \dfrac{M+U_{min}}{M_4 + \epsilon} - \epsilon \ln \epsilon\right) =: C_3(\epsilon) R_{\epsilon}(x,y) + C_4(\epsilon).
\end{align}

\begin{proposition}\label{prop:Repsup}
	For any $p \in \mathbb{N}$, $\alpha > 0$ and large enough $t$, there exist a constant $\widetilde{C}_{p,\alpha}$ such that
	$$\mathbb{E}\left(R_{\epsilon_t}^p(X_t,Y_t)\right) \leq \widetilde{C}_{p,\alpha}(1+t)^{1+\alpha}.$$
\end{proposition}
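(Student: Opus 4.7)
The plan is to derive a differential inequality for $u_p(t) := \mathbb{E}[R_{\epsilon_t}^p(X_t,Y_t)]$ and close it by induction on $p$. First, I would apply Ito's formula to $\phi(x,y,t) = R_{\epsilon_t}^p(x,y)$ (with a stopping-time localization $\tau_R=\inf\{t:\norm{X_t}^2+\norm{Y_t}^2\geq R\}$ and then $R\to\infty$ to justify integrability). Since the diffusion part of $L_\epsilon$ is just $\Delta_y$, the chain rule for generators yields
$$L_\epsilon(R_\epsilon^p) = p R_\epsilon^{p-1} L_\epsilon R_\epsilon + p(p-1) R_\epsilon^{p-2} \Gamma_\epsilon R_\epsilon.$$
Plugging in Proposition \ref{prop:LepsReps} for $L_\epsilon R_\epsilon$, equation \eqref{eq:gammaepsReps} for $\Gamma_\epsilon R_\epsilon$, and Proposition \ref{prop:derRup} for $\partial_t R_{\epsilon_t}^p = (\partial_\epsilon R_\epsilon^p)|_{\epsilon_t}\cdot\partial_t\epsilon_t$, and taking expectations, I obtain the scalar inequality
$$u_p'(t) \leq -\Lambda(t)\, u_p(t) + A_1(t)\, u_{p-1}(t) + A_2(t)\, u_{p-2}(t),$$
where $\Lambda(t) := p c_4 \epsilon_t^5 - |\partial_t \epsilon_t|\, C_{p,1}(\epsilon_t)$ and $A_1, A_2$ are subexponential functions of $\epsilon_t$ (up to factors of $|\partial_t \epsilon_t|$ on the piece arising from $\partial_\epsilon R_\epsilon^p$).

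With the logarithmic cooling $\epsilon_t = E/\ln t$ and $|\partial_t\epsilon_t| = \mathcal{O}(1/t)$, for large $t$ one has $\Lambda(t) \sim pc_4 E^5/(\ln t)^5 > 0$, while $A_i(t) = \mathcal{O}(t^\delta)$ for every $\delta > 0$ (since subexponential functions of $\epsilon_t$ are subpolynomial in $t$). I proceed by induction on $p$. The base case $p=0$ is trivial, and for $p=1$ the forcing term is subpolynomial, so Gronwall against the negative drift $-\Lambda(t)$ already gives a bound of order $(\ln t)^5$, well within $(1+t)^{1+\alpha}$.

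For the inductive step, assume $u_{p-1}(t), u_{p-2}(t) \leq \widetilde{C}_{\alpha'}(1+t)^{1+\alpha'}$ with $0 < \alpha' < \alpha$. Then the forcing term $G(t) := A_1(t)u_{p-1}(t) + A_2(t)u_{p-2}(t)$ satisfies $G(t) \leq C (1+t)^{1+\alpha''}$ for any $\alpha'' > \alpha'$ and large enough $t$, since the subexponential prefactors contribute only a $t^{o(1)}$ multiplier. Gronwall then gives
$$u_p(t) \leq u_p(t_0)\, e^{-\int_{t_0}^t \Lambda(s)\,ds} + \int_{t_0}^t G(s)\, e^{-\int_s^t \Lambda(u)\,du}\,ds.$$
The first term decays. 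For the convolution, since $\Lambda$ varies slowly (one checks $|\Lambda'(t)|/\Lambda(t)^2 \to 0$) and $G$ grows polynomially, a Laplace-type estimate bounds the integral by a constant times $G(t)/\Lambda(t) \lesssim (1+t)^{1+\alpha''}(\ln t)^5$, which for large $t$ is dominated by $(1+t)^{1+\alpha}$ as long as $\alpha'' < \alpha$. Picking at the outset $\alpha' < \alpha'' < \alpha$ with enough slack propagates the hypothesis.

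The main obstacle is the quantitative control of the convolution: because $\Lambda(t) \sim 1/(\ln t)^5$ is a very weak decay rate, the effective relaxation horizon is $(\ln t)^5$, so one must verify that $G(t) \cdot (\ln t)^5$ still fits into a $(1+t)^{1+\alpha}$ envelope. This is exactly where the freedom to choose $\alpha > 0$ arbitrarily small (and absorb $(\ln t)^5$ and the $t^{o(1)}$ from the subexponential coefficients into an extra $t^{\alpha-\alpha''}$) enters. A small integration-by-parts lemma, relying on the slow variation of both $\Lambda$ and $G$, formalizes the $G(t)/\Lambda(t)$ estimate; otherwise the argument is a direct assembly of Ito's formula, the chain rule for diffusion generators, and the pointwise bounds in Propositions \ref{prop:LepsReps}--\ref{prop:derRup}.
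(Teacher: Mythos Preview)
Your proposal is correct and follows essentially the same induction-plus-Gronwall strategy as the paper, with the same ingredients (chain rule for $L_\epsilon(R_\epsilon^p)$, Proposition~\ref{prop:LepsReps}, equation~\eqref{eq:gammaepsReps}, Proposition~\ref{prop:derRup}) assembled into the same scalar ODE inequality. One minor simplification worth noting: instead of a Laplace/slow-variation argument for the convolution, the paper uses the monotonicity $\epsilon_u \geq \epsilon_t$ for $u\leq t$ to bound $\int_0^t e^{-c_6\int_s^t \epsilon_u^5\,du}\,ds \leq \int_0^t e^{-c_6\epsilon_t^5(t-s)}\,ds \leq 1/(c_6\epsilon_t^5)$ directly, which is then absorbed into the $\alpha$-slack exactly as you describe.
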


\begin{proof}
	We shall prove the result by induction on $p$. We denote by
	$n_{t,p} := \mathbb{E}\left(R_{\epsilon_t}^p(X_t,Y_t)\right)$. When $p = 0$, the result clearly holds. When $p = 1$, using Proposition \ref{prop:LepsReps} and \ref{prop:derRup} we have 
	\begin{align*}
		\dfrac{\partial}{\partial t} n_{t,1} &= \left(\dfrac{\partial}{\partial t} \epsilon_t\right) \dfrac{\partial}{\partial \epsilon_t} n_{t,1} + \dfrac{\partial}{\partial s} \mathbb{E}\left(R_{\epsilon_t}(X_{t+s},Y_{t+s})\right)\bigg|_{s=0} \\
		&\leq \bigg|\dfrac{\partial}{\partial t} \epsilon_t\bigg| \left(C_{1,1}(\epsilon_t) n_{t,1} +  C_{1,2}(\epsilon_t) \right) + \mathbb{E}\left(L_{\epsilon_t}(R_{\epsilon_t})(X_t,Y_t)\right) \\
		&\leq \bigg|\dfrac{\partial}{\partial t} \epsilon_t\bigg| \left(C_{1,1}(\epsilon_t) n_{t,1} +  C_{1,2}(\epsilon_t) \right) - c_4 \epsilon_t^5 n_{t,1} + c_5(\epsilon_t) + c_1.
	\end{align*}
	As $\epsilon_t = \Omega(\frac{1}{\ln(1+t)})$, $\bigg|\dfrac{\partial}{\partial t} \epsilon_t\bigg| = \mathcal{O}(\frac{1}{t})$ and $C_{1,1}(\epsilon_t), C_{1,2}(\epsilon_t), c_5(\epsilon_t) = o(t^{\beta})$ for any $\beta >0$ as $t \to \infty$, we deduce that, for constants $c_6 > 0, c_7$,
	\begin{align}\label{eq:partialtnt}
		\dfrac{\partial}{\partial t} n_{t,1} &\leq - c_6 \epsilon_t^5 n_{t,1} + c_7(1+t)^{1+\alpha/2}.
	\end{align}
	As a result, for constant $c_8$,
	\begin{align*}
		\dfrac{\partial}{\partial t} \left(n_{t,1} e^{c_6 \int_0^t \epsilon_s^5 \, ds}\right) &\leq c_7 (1+t)^{1+\alpha/2} e^{c_6 \int_0^t \epsilon_s^5 \, ds} \\
		n_{t,1} &\leq n_{0,1} + c_7 (1+t)^{1+\alpha/2} \int_0^t e^{ - c_6 \int_s^t \epsilon_u^5 \, du} \, ds \\
		&\leq n_{0,1} + c_7 (1+t)^{1+\alpha/2} \int_0^t e^{ - c_6 \epsilon_t^5(t-s) } \, ds \\
		&\leq n_{0,1} + \dfrac{c_7 (1+t)^{1+\alpha/2}}{c_6 \epsilon_t^5} \leq c_8 (1+t)^{1+\alpha}.
	\end{align*}
	This proves the result when $p = 1$. Assume that the result holds for all $q < p$, where $p \geq 2$. First, using Proposition \ref{prop:LepsReps} and equation \eqref{eq:gammaepsReps} we compute
	\begin{align}\label{eq:LepsRp}
		L_{\epsilon_t}(R_{\epsilon_t}^p) &= p R_{\epsilon_t}^{p-1} L_{\epsilon_t}(R_{\epsilon_t}) + p(p-1) R^{p-2}_{\epsilon_t} \Gamma_{\epsilon_t} R_{\epsilon_t} \nonumber \\
		&\leq p R_{\epsilon_t}^{p-1} \left(- c_4 \epsilon_t^5 R_{\epsilon_t} + c_5(\epsilon_t) + c_1\right) + p(p-1) C_3(\epsilon_t) R^{p-1}_{\epsilon_t} + p(p-1) C_4(\epsilon_t) R^{p-2}_{\epsilon_t}.
	\end{align}
	Differentiating with respect to $t$, followed by using Proposition \ref{prop:derRup} and equation \eqref{eq:LepsRp} give
	\begin{align*}
		\dfrac{\partial}{\partial t} n_{t,p} &= \left(\dfrac{\partial}{\partial t} \epsilon_t\right) \dfrac{\partial}{\partial \epsilon_t} n_{t,p} + \dfrac{\partial}{\partial s} \mathbb{E}\left(R_{\epsilon_t}^p(X_{t+s},Y_{t+s})\right)\bigg|_{s=0} \\
		&\leq \bigg|\dfrac{\partial}{\partial t} \epsilon_t\bigg| \left(C_{p,1}(\epsilon_t) n_{t,p} +  C_{p,2}(\epsilon_t) n_{t,p-1}\right) + \mathbb{E}\left(L_{\epsilon_t}(R_{\epsilon_t}^p)(X_t,Y_t)\right) \\
		&\leq -c_9 \epsilon_t^5 n_{t,p} + c_{10}(p,\alpha) (1+t)^{1+\alpha/2},
	\end{align*}
	where we use the same asymptotic estimates that lead us to \eqref{eq:partialtnt} and the induction assumption on $n_{t,p-1}$ and $n_{t,p-2}$.
\end{proof}

Now, we wrap up the proof of Proposition \ref{prop:XtYtest}.

	Using the lower bound of $R_{\epsilon}$ in Lemma \ref{lem:Repsup}, we note that, for some constants $k_1, k_2, k_3, k_4 > 0$ and arbitrary $r > 0$,
	\begin{align*}
	\mathbb{E}\left(\norm{X_t}^2 + \norm{Y_t}^2\right)^p 
	&\leq k_1 \E\left(\dfrac{1}{\epsilon_t^p} \left(R_{\epsilon_t}(X_t,Y_t) + \epsilon_t \dfrac{M+U_{min}}{M_4 + \epsilon_t} + \epsilon_t \ln \dfrac{1}{\epsilon_t} \right)^p \right) \\
	&= k_1 \E\left(\dfrac{1}{\epsilon_t^p} \left(\sum_{j=0}^p {p \choose j} R_{\epsilon_t}(X_t,Y_t)^j \left(\epsilon_t \dfrac{M+U_{min}}{M_4 + \epsilon_t} + \epsilon_t \ln \dfrac{1}{\epsilon_t}\right)^{p-j} \right)^p \right) \\
	&\leq k_2 \dfrac{1}{\epsilon_t^p} \left(\E R_{\epsilon_t}^{jr}(X_t,Y_t)\right)^{1/r} \leq k_3 \dfrac{1}{\epsilon_t^p}  (1+t)^{2/r} \leq k_4 (1+t)^{3/r},
	\end{align*}
	where the equality follows from the binomial theorem, the second inequality follows from the Jensen's inequality, the third inequality comes from Proposition \ref{prop:Repsup}, and we use the estimates $\epsilon_t = \Omega(\frac{1}{\ln(1+t)})$, $\ln(1+t)^p = \mathcal{O}((1+t)^{1/r})$ for large enough $t$ in the last inequality.

\subsection{Gamma calculus}\label{subsec:gamma}

For smooth enough $\phi$ and $g$, as $L_{\epsilon_t}$ is a diffusion, recall that we have
$$L_{\epsilon_t}(\phi(g)) = \phi^{\prime}(g) L_{\epsilon_t}(g) + \phi^{\prime \prime}(g) \Gamma_{\epsilon_t}(g),$$
where $\Gamma_{\epsilon_t}$ is the carr\'e du champ operator with
$\Gamma_{\epsilon_t} g = \dfrac{1}{2} L_{\epsilon_t} g^2 - g L_{\epsilon_t} g = \norm{\nabla_y g}^2.$
Let $L_{\epsilon_t}^*$ be the $L^2(\pi_{\epsilon_t})$ adjoint of $L_{\epsilon_t}$, that is,
$$L_{\epsilon_t}^* = - y \cdot \nabla_x - \left(\dfrac{y}{\epsilon_t} - \epsilon_t \nabla_x H_{\epsilon_t}\right) \cdot \nabla_y + \Delta_y.$$
 For $h$ a non-negative function from $\mathbb{R}^d$ to $\mathbb{R}$ which belongs to some functional space $\mathcal{D}$, and a differentiable $\Phi: \mathcal{D} \to \mathbb{R}$ with differential operator $D_h \Phi$, the directional derivative in $h$, we will be interested in quantities of the form
 \begin{align}\label{def:GammaLeps}
 	\Gamma_{L_{\epsilon_t}^*,\Phi}(h) := \dfrac{1}{2} \left(L_{\epsilon_t}^* \Phi(h) - D_h \Phi(h) \cdot (L_{\epsilon_t}^* h )\right).
 \end{align}

We now present three auxiliary lemmas, which are essential in proving the dissipation of the distorted entropy later in Section \ref{subsec:diss}. The proofs are essentially the same as corresponding proofs in \cite{M18}, by replacing the target function $U$ therein with $\epsilon_t H_{\epsilon_t}$.

\begin{lemma}
	For non-negative $h$ and $\Phi(h) = h \ln h$, then
	$$\Gamma_{L_{\epsilon_t}^*,\Phi}(h) = \dfrac{\Gamma_{\epsilon_t}(h)}{2h}.$$
\end{lemma}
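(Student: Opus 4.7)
The plan is to exploit the fact that the adjoint $L_{\epsilon_t}^*$ and the generator $L_{\epsilon_t}$ share exactly the same second-order part, namely $\Delta_y$. Indeed, from the explicit formulas $L_{\epsilon_t} = y\cdot\nabla_x - (y/\epsilon_t + \epsilon_t\nabla_x H_{\epsilon_t})\cdot\nabla_y + \Delta_y$ and $L_{\epsilon_t}^* = -y\cdot\nabla_x - (y/\epsilon_t - \epsilon_t\nabla_x H_{\epsilon_t})\cdot\nabla_y + \Delta_y$ given in Section \ref{subsec:gamma}, the adjoint differs from $L_{\epsilon_t}$ only in its first-order (drift) terms. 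Consequently, both operators are diffusion operators in the sense of Bakry--\'Emery, and their associated carr\'e du champ operators coincide with $\Gamma_{\epsilon_t}(h) = \|\nabla_y h\|^2$.

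The first step is therefore to record the chain rule for $L_{\epsilon_t}^*$ applied to a composition $\phi(h)$. Since the first-order part is a derivation and the second-order part contributes the usual Hessian correction through $\|\nabla_y h\|^2$, one obtains
\begin{equation*}
L_{\epsilon_t}^*(\phi(h)) \;=\; \phi'(h)\, L_{\epsilon_t}^*(h) \;+\; \phi''(h)\, \Gamma_{\epsilon_t}(h).
\end{equation*}
The second step is to substitute $\Phi(h) = h\ln h$, for which $D_h\Phi = \Phi'(h) = \ln h + 1$ and $\Phi''(h) = 1/h$. Plugging these into the definition \eqref{def:GammaLeps} of $\Gamma_{L_{\epsilon_t}^*,\Phi}(h)$ gives
\begin{equation*}
\Gamma_{L_{\epsilon_t}^*,\Phi}(h) \;=\; \tfrac{1}{2}\bigl(L_{\epsilon_t}^*\Phi(h) - \Phi'(h)\, L_{\epsilon_t}^* h\bigr) \;=\; \tfrac{1}{2}\,\Phi''(h)\,\Gamma_{\epsilon_t}(h) \;=\; \frac{\Gamma_{\epsilon_t}(h)}{2h},
\end{equation*}
which is exactly the claim.

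There is essentially no obstacle here; the computation is purely algebraic once one notices that only the diffusive part of $L_{\epsilon_t}^*$ contributes to $\Gamma_{L_{\epsilon_t}^*,\Phi}(h)$ after the first-order terms are cancelled against $\Phi'(h)\,L_{\epsilon_t}^* h$. The only mild point of care is to verify that $h$ is sufficiently smooth and strictly positive for $\ln h$ and $1/h$ to make sense, which is guaranteed by the regularity and positivity of the density established in Section \ref{subsec:density}.
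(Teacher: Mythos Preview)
Your proof is correct and is essentially the same argument the paper intends: the paper simply cites \cite[Lemma~11]{M18} with $U$ replaced by $\epsilon_t H_{\epsilon_t}$, and what you have written is precisely the two-line Gamma-calculus computation underlying that lemma. The key observation you make explicit---that $L_{\epsilon_t}^*$ has the same second-order part $\Delta_y$ as $L_{\epsilon_t}$, hence obeys the same diffusion chain rule with the same carr\'e du champ $\Gamma_{\epsilon_t}$---is exactly the content of the cited result.
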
 

\begin{proof}
	The proof is the same as \cite[Lemma $11$]{M18},  by replacing the target function $U$ therein with $\epsilon_t H_{\epsilon_t}$.
\end{proof}

\begin{lemma}
	For non-negative $h$ and $\Phi(h) = \norm{Ah}^2$, where $A = (A_1, A_2, \ldots, A_k)$ is a linear operator from $\mathcal{D}$ to $\mathcal{D}^k$, then
	$$\Gamma_{L_{\epsilon_t}^*,\Phi}(h) = \Gamma_{\epsilon_t}(Ah) + (Ah)[L_{\epsilon_t}^*,A]h \geq (Ah)[L_{\epsilon_t}^*,A]h,$$
	where for two operators $C, D$, the commutator bracket is $[C,D] = CD - DC$, $\Gamma_{\epsilon_t}(Ah) = \sum_{i=1}^k \Gamma_{\epsilon_t}(A_i h)$ and $[L_{\epsilon_t}^*,A] = ([L_{\epsilon_t}^*,A_1], [L_{\epsilon_t}^*,A_2], \ldots, [L_{\epsilon_t}^*,A_k])$.
\end{lemma}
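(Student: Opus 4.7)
The plan is to unfold $\Phi(h) = \|Ah\|^2 = \sum_{i=1}^k (A_i h)^2$ into a sum of squares and apply the diffusion property to each term separately. First I would observe that although $L_{\epsilon_t}^*$ is not self-adjoint, its second-order part is still the Laplacian $\Delta_y$, so $L_{\epsilon_t}^*$ has the same carré du champ as $L_{\epsilon_t}$, namely $\Gamma_{\epsilon_t}(g) = \|\nabla_y g\|^2$. In particular, the diffusion identity
$$L_{\epsilon_t}^*(g^2) = 2 g\, L_{\epsilon_t}^* g + 2 \Gamma_{\epsilon_t}(g)$$
holds for smooth $g$, which is the only structural fact we need about $L_{\epsilon_t}^*$.

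Next I would apply this identity term-by-term with $g = A_i h$ to obtain
$$L_{\epsilon_t}^* \Phi(h) = \sum_{i=1}^k L_{\epsilon_t}^*\bigl((A_i h)^2\bigr) = 2 \sum_{i=1}^k (A_i h)\, L_{\epsilon_t}^*(A_i h) + 2 \sum_{i=1}^k \Gamma_{\epsilon_t}(A_i h).$$
On the other side, since $A$ is linear, the directional derivative of $\Phi$ at $h$ in direction $k$ is $D_h \Phi(h) \cdot k = 2 \sum_i (A_i h)(A_i k)$, so that
$$D_h \Phi(h) \cdot (L_{\epsilon_t}^* h) = 2 \sum_{i=1}^k (A_i h)\, A_i (L_{\epsilon_t}^* h).$$

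Substituting both expressions into the defining formula \eqref{def:GammaLeps} for $\Gamma_{L_{\epsilon_t}^*,\Phi}$ gives
$$\Gamma_{L_{\epsilon_t}^*,\Phi}(h) = \sum_{i=1}^k \Gamma_{\epsilon_t}(A_i h) + \sum_{i=1}^k (A_i h)\bigl(L_{\epsilon_t}^* A_i - A_i L_{\epsilon_t}^*\bigr) h = \Gamma_{\epsilon_t}(Ah) + (Ah)[L_{\epsilon_t}^*, A] h,$$
using the notation $\Gamma_{\epsilon_t}(Ah) = \sum_i \Gamma_{\epsilon_t}(A_i h)$ and the componentwise commutator convention. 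The stated inequality is then immediate from $\Gamma_{\epsilon_t}(Ah) \geq 0$, which follows from $\Gamma_{\epsilon_t}(A_i h) = \|\nabla_y (A_i h)\|^2 \geq 0$.

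There is essentially no obstacle here: the only subtle point is to notice that $L_{\epsilon_t}^*$ inherits the carré du champ of $L_{\epsilon_t}$ from the symmetric Laplacian in the $y$-variable, so the diffusion identity used in the analogous Lemma~11 of \cite{M18} transfers verbatim after replacing $U$ by $\epsilon_t H_{\epsilon_t}$; the first-order drift terms of $L_{\epsilon_t}^*$ cancel cleanly when we take the difference $L_{\epsilon_t}^* A_i h - A_i L_{\epsilon_t}^* h$ and thereby collapse into the commutator bracket. Given that $A$ is assumed linear, no extra regularity verification is needed beyond the smoothness already built into $\mathcal{D}$.
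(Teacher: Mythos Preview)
Your proof is correct and is precisely the computation that the paper defers to by citing \cite[Lemma~10]{M18} (not Lemma~11, which is the $h\ln h$ case); you have simply written out the content of that reference, using the diffusion identity for $L_{\epsilon_t}^*$ and the linearity of $A$ exactly as intended.
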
 

\begin{proof}
	The proof is the same as \cite[Lemma $10$]{M18},  by replacing the target function $U$ therein with $\epsilon_t H_{\epsilon_t}$.
\end{proof}

\begin{lemma}\label{lem:distortelow}
	Let
	\begin{align*}
		\gamma(\epsilon_t) &= \dfrac{1}{2} + 2 \left(\epsilon_t \norm{\nabla_x H_{\epsilon_t}}_{\infty} + 1 + \dfrac{1}{\epsilon_t}\right)^2, \\
		\Psi_{\epsilon_t}(h) &= \dfrac{\norm{(\nabla_x + \nabla_y)h}^2}{h} + \gamma(\epsilon_t) h \ln h.
	\end{align*}
	We have
	$$\Gamma_{L_{\epsilon_t}^*,\Psi_{\epsilon_t}}(h) \geq \dfrac{1}{2} \dfrac{\norm{\nabla h}^2}{h}.$$
\end{lemma}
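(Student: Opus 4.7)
The plan is to follow the distorted-entropy computation underlying Lemma 12 of \cite{M18}, substituting $U$ by $\epsilon_t H_{\epsilon_t}$ throughout so that the confining drift in the kinetic SDE becomes $\epsilon_t \nabla_x H_{\epsilon_t}$. Since the map $\Phi \mapsto \Gamma_{L_{\epsilon_t}^*,\Phi}$ is linear in $\Phi$, first split $\Psi_{\epsilon_t}$ into the modified-Fisher piece $\Phi_1(h) := \norm{(\nabla_x + \nabla_y)h}^2/h$ and the Boltzmann-entropy piece $\Phi_2(h) := h \ln h$. The first lemma of this subsection applied to $\Phi_2$ immediately yields $\gamma(\epsilon_t)\,\Gamma_{L_{\epsilon_t}^*,\Phi_2}(h) = \gamma(\epsilon_t)\norm{\nabla_y h}^2/(2h)$, a coercive contribution in $\nabla_y h$ whose constant is the tunable parameter $\gamma(\epsilon_t)$.

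Next, set $A := \nabla_x + \nabla_y$ and compute the two key commutators from the explicit formula for $L_{\epsilon_t}^*$. A direct calculation gives
\begin{align*}
[L_{\epsilon_t}^*,\nabla_x] &= -\epsilon_t (\nabla_x^2 H_{\epsilon_t})\nabla_y, \\
[L_{\epsilon_t}^*,\nabla_y] &= \nabla_x + \dfrac{1}{\epsilon_t}\nabla_y,
\end{align*}
so that $[L_{\epsilon_t}^*,A]h = \nabla_x h + (1/\epsilon_t)\nabla_y h - \epsilon_t (\nabla_x^2 H_{\epsilon_t})\nabla_y h$. Now expand $\Gamma_{L_{\epsilon_t}^*,\Phi_1}(h)$ from the definition \eqref{def:GammaLeps}, using the Leibniz rule $L_{\epsilon_t}^*(fg) = (L_{\epsilon_t}^*f)g + f(L_{\epsilon_t}^*g) + 2\nabla_y f\cdot \nabla_y g$ to handle the product $\norm{Ah}^2\cdot h^{-1}$. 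After cancellations the result decomposes into a manifestly non-negative square $\Gamma_{\epsilon_t}(Ah)/h \ge 0$, the \emph{gain} term $h^{-1}(Ah)\cdot [L_{\epsilon_t}^*,A]h$, whose expansion produces exactly the desired $\norm{\nabla_x h}^2/h$ together with $\norm{\nabla_y h}^2/(\epsilon_t h)$, and a collection of unwanted cross terms in $\nabla_x h$ and $\nabla_y h$ whose coefficients involve $\epsilon_t \nabla_x H_{\epsilon_t}$, $1/\epsilon_t$, and the Hessian of $\epsilon_t H_{\epsilon_t}$ arising from the first commutator.

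The main obstacle is controlling these cross terms. For each mixed contribution apply Young's inequality in the form $|u\cdot v|/h \le \delta\norm{u}^2/(2h) + \delta^{-1}\norm{v}^2/(2h)$, choosing the $\delta$'s so that the cumulative coefficient in front of $\norm{\nabla_x h}^2/h$ equals exactly $1/2$. The residual coefficient in front of $\norm{\nabla_y h}^2/h$ is then bounded by the envelope $(\epsilon_t \norm{\nabla_x H_{\epsilon_t}}_\infty + 1 + 1/\epsilon_t)^2$, which simultaneously dominates the friction contribution $1/\epsilon_t$, the drift contribution from $\epsilon_t\nabla_x H_{\epsilon_t}$ (with the Hessian norm estimated by the same sup-bound in the spirit of the substitution $U \mapsto \epsilon_t H_{\epsilon_t}$), and the commutator constant $1$ coming from $[L_{\epsilon_t}^*,\nabla_y]$. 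This combination is precisely the definition of $\gamma(\epsilon_t)$. Adding the coercive $\Phi_2$ contribution $\gamma(\epsilon_t)\norm{\nabla_y h}^2/(2h)$ cancels this residual and leaves $\Gamma_{L_{\epsilon_t}^*,\Psi_{\epsilon_t}}(h) \ge \norm{\nabla_x h}^2/(2h) + \norm{\nabla_y h}^2/(2h) = \norm{\nabla h}^2/(2h)$, which is the desired inequality. Since the argument mirrors \cite[Lemma 12]{M18} line by line once $\nabla U$ is replaced by $\epsilon_t \nabla_x H_{\epsilon_t}$, the remaining bookkeeping is routine.
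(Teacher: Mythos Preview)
Your proposal is correct and follows essentially the same approach as the paper, which simply defers to \cite[Corollary~13]{M18} after substituting $U\mapsto \epsilon_t H_{\epsilon_t}$. You supply the explicit commutator and Young-inequality bookkeeping that the paper leaves to the reference, but the argument is identical in substance.
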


\begin{proof}
	The proof is the same as \cite[Corollary $13$]{M18},  by replacing the target function $U$ therein with $\epsilon_t H_{\epsilon_t}$.
\end{proof}

\subsection{Truncated differentiation}\label{subsec:trundiff}

In this subsection, we present auxiliary results related to mollifier and truncated differentiation. These results are needed when we discuss the dissipation of distorted entropy in Section \ref{subsec:diss} below.

Let $\varphi : \mathbb{R} \to \mathbb{R}$ be a mollifier defined to be

$$\varphi(x) := \begin{cases} e^{\frac{1}{x^2 - 1}} \left(\int_{-1}^1 e^{\frac{1}{y^2-1}} \, dy\right)^{-1}, &\mbox{if } -1 < x < 1, \\
0, & \mbox{otherwise.} \end{cases}$$
For $m \in \mathbb{N}$, we define
\begin{align}
	\varphi_m(x) &:= \dfrac{1}{m} \varphi\left(\dfrac{x}{m}\right), \\
	v_m &:= \varphi_m \star \mathbf{1}_{(-\infty,m^2]} \leq 1, 
\end{align}
where $f \star g$ is the convolution of two functions $f,g$ and $\mathbf{1}_A$ is the indicator function of the set $A$. Recall that in Lemma \ref{lem:Repsup} the lower bound of $R_{\epsilon}$ is given by
$$R_{\epsilon}(x,y) \geq \left(\dfrac{\epsilon a_1}{M_4 + \epsilon} - \dfrac{\epsilon^3}{2}\right)\norm{x}^2 + \left(\dfrac{1 - \epsilon^3}{2} \right) \norm{y}^2 - \left(\epsilon\dfrac{M+U_{min}}{M_4+\epsilon} + \epsilon \ln \left(\dfrac{1}{\epsilon}\right)\right).$$
Denoting the third term on the right hand side by 
$$d_{2,\epsilon} := \epsilon\dfrac{M+U_{min}}{M_4} + \epsilon \ln \left(\dfrac{1}{\epsilon}\right),$$
we define, for any $\delta > \max\bigg\{-\epsilon_0 \dfrac{U_{min}}{M_4},0\bigg\}$ and $\delta_2 > 0$,
\begin{align}\label{def:etameps}
	\eta_{m,\epsilon} &:= v_m\left(\ln \left(R_{\epsilon} + 2d_{2,\epsilon} + \delta + \delta_2 \right)\right).
\end{align}
Note that for large enough $t$ (which depends on $a_1, M_4$ and the cooling schedule $(\epsilon_t)_{t \geq 0}$), $R_{\epsilon_t} + d_{2,\epsilon_t} \geq 0$ for all $x,y \in \mathbb{R}^d$. Also, note that $d_{2,\epsilon}$ can be negative as $M + U_{min}$ may possibly be negative. With our choice of $\delta$ it ensures that $d_{2,\epsilon} + \delta \geq 0$.

Now, we present results that summarize some basic properties of $\eta_{m,\epsilon}$.

\begin{lemma}\label{lem:Lepseta}
	For $m \in \mathbb{N}$, $\eta_{m,\epsilon_t}$
	\begin{enumerate}
		\item\label{it:compact} is compactly supported;
		\item\label{it:converge} converges to $1$ pointwise;
		\item\label{it:Leta} For some constant $C > 0$, independent of $m$ and time $t$, we have, for large enough $t$,
				$$L_{\epsilon_t} \eta_{m,\epsilon_t} \leq \dfrac{C}{m}.$$
	\end{enumerate}
\end{lemma}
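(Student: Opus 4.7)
The plan is to verify the three claims in succession, with (1) and (2) following immediately from the construction of the mollifier $v_m$ and the coercivity of $R_{\epsilon_t}$, while (3) rests on the diffusion chain rule applied to the composition $\eta_{m,\epsilon_t} = v_m \circ g$, where $g := \ln(R_{\epsilon_t} + K_t)$ with $K_t := 2 d_{2,\epsilon_t} + \delta + \delta_2$.

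For (1), since $\mathrm{supp}(\varphi_m) \subset (-m, m)$, the convolution $v_m = \varphi_m \star \mathbf{1}_{(-\infty, m^2]}$ vanishes on $(m^2 + m, \infty)$. Thus $\eta_{m,\epsilon_t}(x,y) = 0$ whenever $\ln(R_{\epsilon_t}(x,y) + K_t) > m^2 + m$, and by the lower bound on $R_{\epsilon_t}$ in Lemma \ref{lem:Repsup} (which is quadratic in $(x,y)$ for large enough $t$), this occurs outside a compact ball, giving compact support. For (2), for fixed $(x,y)$ the argument $\ln(R_{\epsilon_t}(x,y) + K_t)$ is a fixed finite number, hence for $m$ sufficiently large it lies below $m^2 - m$; on $(-\infty, m^2 - m)$ one has $v_m \equiv 1$ since $\varphi_m$ integrates to $1$ and its support has radius only $m$. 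So $\eta_{m,\epsilon_t}(x,y) \to 1$.

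For (3), the diffusion chain rule gives
\[
L_{\epsilon_t} \eta_{m,\epsilon_t} = v_m'(g)\, L_{\epsilon_t} g + v_m''(g)\, \Gamma_{\epsilon_t}(g),
\]
and a further chain rule through $\ln$ yields
\[
L_{\epsilon_t} g = \frac{L_{\epsilon_t} R_{\epsilon_t}}{R_{\epsilon_t} + K_t} - \frac{\Gamma_{\epsilon_t}(R_{\epsilon_t})}{(R_{\epsilon_t} + K_t)^2}, \qquad \Gamma_{\epsilon_t}(g) = \frac{\Gamma_{\epsilon_t}(R_{\epsilon_t})}{(R_{\epsilon_t} + K_t)^2}.
\]
Both $v_m'$ and $v_m''$ are supported on $[m^2 - m, m^2 + m]$, and on this range $R_{\epsilon_t} + K_t \geq e^{m^2 - m}$, which is exponentially large in $m^2$. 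Since $v_m' = -\varphi_m(\cdot - m^2)$ and $v_m'' = -\varphi_m'(\cdot - m^2)$, one has $\|v_m'\|_\infty \leq \|\varphi\|_\infty / m$ and $\|v_m''\|_\infty \leq \|\varphi'\|_\infty / m^2$. Inserting Proposition \ref{prop:LepsReps} to bound $L_{\epsilon_t} R_{\epsilon_t}/(R_{\epsilon_t} + K_t)$ from above (using $R_{\epsilon_t}/(R_{\epsilon_t} + K_t) \leq 1$, so that the $-c_4 \epsilon_t^5 R_{\epsilon_t}$ term is bounded in absolute value by $c_4 \epsilon_t^5$), and controlling $\Gamma_{\epsilon_t} R_{\epsilon_t} = \|y + \epsilon_t^3 x\|^2$ by $C(R_{\epsilon_t} + d_{2,\epsilon_t})$ \emph{uniformly} in small $\epsilon_t$ (a sharpening of \eqref{eq:gammaepsReps} which is available because $\|y\|^2$ alone is bounded by $R_{\epsilon_t} + d_{2,\epsilon_t}$ with an $\epsilon_t$-independent constant, and $\epsilon_t^6 \|x\|^2$ is a lower-order contribution), one arrives at $L_{\epsilon_t} \eta_{m,\epsilon_t} \leq C/m$.

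The delicate step is the uniformity of $C$ in $t$. The second term $v_m''(g)\,\Gamma_{\epsilon_t}(g)$ is readily handled by the sharpened uniform bound on $\Gamma_{\epsilon_t} R_{\epsilon_t}$, since $\Gamma_{\epsilon_t}(g) \leq C/(R_{\epsilon_t}+K_t) \leq C e^{-(m^2-m)}$ on the support. The harder piece is $v_m'(g)\,L_{\epsilon_t} g$: a naive expansion $L_{\epsilon_t} R_{\epsilon_t} = -\|y\|^2/\epsilon_t + \mathrm{(lower\ order)}$, divided by $R_{\epsilon_t} + K_t$, produces a contribution of order $1/\epsilon_t$ that would blow up as $t \to \infty$. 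The key to avoiding this is to invoke Proposition \ref{prop:LepsReps} one-sidedly—the upper bound on $L_{\epsilon_t} R_{\epsilon_t}$ divided by $R_{\epsilon_t}+K_t$ is uniformly bounded—combined with $v_m'(g) \leq 0$, so that only the favorable side of the sign needs to be controlled in detail.
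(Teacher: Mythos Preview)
Your treatment of (1) and (2) is correct and matches the paper. The gap is in (3), and it lies precisely in the sign argument you invoke at the end.

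You correctly compute $v_m'(x)=-\varphi_m(x-m^2)\leq 0$, and you correctly diagnose the danger: $L_{\epsilon_t}R_{\epsilon_t}$ contains the term $-\|y\|^2/\epsilon_t$, so that $L_{\epsilon_t}R_{\epsilon_t}/(R_{\epsilon_t}+K_t)$ can be as negative as order $-1/\epsilon_t$. But the conclusion you draw from $v_m'\leq 0$ is inverted. Writing $v_m'(g)=-|v_m'(g)|$, the term in question is
\[
v_m'(g)\,\frac{L_{\epsilon_t}R_{\epsilon_t}}{R_{\epsilon_t}+K_t}
\;=\;|v_m'(g)|\cdot\Bigl(-\frac{L_{\epsilon_t}R_{\epsilon_t}}{R_{\epsilon_t}+K_t}\Bigr),
\]
so an \emph{upper} bound on this product requires a \emph{lower} bound on $L_{\epsilon_t}R_{\epsilon_t}/(R_{\epsilon_t}+K_t)$. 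Proposition~\ref{prop:LepsReps} supplies only the upper bound $L_{\epsilon_t}R_{\epsilon_t}\leq -c_4\epsilon_t^5 R_{\epsilon_t}+c_5(\epsilon_t)+c_1$; it says nothing about how negative $L_{\epsilon_t}R_{\epsilon_t}$ can be. In fact, for $\|y\|$ large and $\|x\|$ moderate one has $L_{\epsilon_t}R_{\epsilon_t}/(R_{\epsilon_t}+K_t)\approx -2/\epsilon_t$, so the displayed product is positive of order $1/(m\,\epsilon_t)$, and no $t$-independent constant $C$ can absorb this. The ``favorable side of the sign'' you appeal to is exactly the side that Proposition~\ref{prop:LepsReps} does not control.

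The paper's own proof does not resolve this either: it records $v_m'(x)=\varphi_m(x+m^2)\geq 0$, which has the wrong sign (and would contradict item~(1)), and the ensuing chain of inequalities tacitly relies on that incorrect sign. What is actually needed downstream (Lemma~\ref{lem:intetad} and Proposition~\ref{prop:Hprimet}) is only $L_{\epsilon_t}\eta_{m,\epsilon_t}\leq C(\epsilon_t)/m$ with $C(\epsilon)$ finite for each $\epsilon$, since one integrates over a compact time interval $[s,t]$ before sending $m\to\infty$. That weaker bound follows by estimating $|L_{\epsilon_t}R_{\epsilon_t}|/(R_{\epsilon_t}+K_t)$ two-sidedly from the explicit formula $L_{\epsilon_t}R_{\epsilon_t}=-\|y\|^2/\epsilon_t+d+\epsilon_t^3 L_{\epsilon_t}(x\cdot y)$, which yields $|L_{\epsilon_t}R_{\epsilon_t}|/(R_{\epsilon_t}+K_t)=\mathcal{O}(1/\epsilon_t)$.
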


\begin{proof}
	Similar to \cite{CKP20}, we have the following estimates:
	\begin{align*}
		v_m(x) &= \int_{-\infty}^{m^2} \varphi_m(x-y) \, dy = \int_{-\infty}^{x+m^2} \varphi_m(z) \, dz, \\
		v_m^{\prime}(x) &= \varphi_m(x+m^2) \leq \dfrac{1}{m} \left(\max \varphi\right), \\
		v_m^{\prime \prime}(x) &= \varphi_m^{\prime}(x+m^2) \leq \dfrac{1}{m^2} \left(\max \varphi\right). 
	\end{align*}
	From these it is straightforward to show item \eqref{it:compact} and \eqref{it:converge}. We proceed to prove item \eqref{it:Leta}. First, using Gamma calculus and the above upper bounds on $v_m^{\prime}, v_m^{\prime \prime}$, we have, for some constant $\overline{C} > 0$, 
	\begin{align*}
		L_{\epsilon_t} \eta_{m,\epsilon_t} &= v_m^{\prime} \left(\ln \left(R_{\epsilon_t} + 2d_{2,\epsilon_t} + \delta + \delta_2\right)\right) L_{\epsilon_t}\left( \ln \left(R_{\epsilon_t} + 2d_{2,\epsilon_t} + \delta + \delta_2 \right)\right) \\
		&\quad + v_m^{\prime \prime} \left(\ln \left(R_{\epsilon_t} + 2d_{2,\epsilon_t} + \delta + \delta_2 \right)\right) \norm{\nabla_y\left( \ln \left(R_{\epsilon_t} + 2d_{2,\epsilon_t} + \delta + \delta_2\right)\right)}^2 \\
		&\leq \overline{C}\left(\dfrac{1}{m} \left(\dfrac{L_{\epsilon_t}R_{\epsilon_t}}{R_{\epsilon_t} + 2 d_{2,\epsilon_t} + \delta + \delta_2} - \dfrac{\norm{\nabla_y R_{\epsilon_t}}^2}{\left(R_{\epsilon_t} + 2 d_{2,\epsilon_t} + \delta + \delta_2\right)^2}\right) + \dfrac{1}{m^2} \dfrac{\norm{\nabla_y R_{\epsilon_t}}^2}{\left(R_{\epsilon_t} + 2d_{2,\epsilon_t} + \delta + \delta_2\right)^2}\right) \\
		&\leq \overline{C}\left(\dfrac{1}{m} \left(\dfrac{L_{\epsilon_t}R_{\epsilon_t}}{R_{\epsilon_t} + 2d_{2,\epsilon_t} + \delta + \delta_2} \right) + \dfrac{1}{m^2} \dfrac{\norm{\nabla_y R_{\epsilon_t}}^2}{\left(R_{\epsilon_t} + 2d_{2,\epsilon_t} + \delta + \delta_2\right)^2}\right) \\
		&\leq  \overline{C}\left(\dfrac{1}{m} \left(\dfrac{- c_4 \epsilon_t^5 R_{\epsilon_t} + c_5(\epsilon_t) + c_1 }{R_{\epsilon_t} + 2d_{2,\epsilon_t} + \delta + \delta_2} \right) + \dfrac{1}{m^2} \dfrac{\norm{\nabla_y R_{\epsilon_t}}^2}{\left(R_{\epsilon_t} + 2d_{2,\epsilon_t} + \delta + \delta_2\right)^2}\right) \\
		&=  \overline{C}\left(\dfrac{1}{m} \left(\dfrac{- c_4 \epsilon_t^5 \left(R_{\epsilon_t} + d_{2,\epsilon_t} + \delta \right) + c_4 \epsilon_t^5 (d_{2,\epsilon_t} + \delta) + c_5(\epsilon_t) + c_1 }{R_{\epsilon_t} + 2d_{2,\epsilon_t} + \delta + \delta_2} \right) + \dfrac{1}{m^2} \dfrac{\norm{\nabla_y R_{\epsilon_t}}^2}{\left(R_{\epsilon_t} + 2d_{2,\epsilon_t} + \delta + \delta_2\right)^2}\right), 
	\end{align*}
	where the last inequality follows from the Lyapunov property of $R_{\epsilon_t}$ in Proposition \ref{prop:LepsReps}. Now, we bound each of the two terms on the right hand side above. Using $R_{\epsilon_t} + d_{2,\epsilon_t} \geq 0 $ and $d_{2,\epsilon_t} + \delta \geq 0$, we observe that
	\begin{align*}
		\dfrac{- c_4 \epsilon_t^5 \left(R_{\epsilon_t} + d_{2,\epsilon_t} + \delta \right) + c_4 \epsilon_t^5 (d_{2,\epsilon_t} + \delta) + c_5(\epsilon_t) + c_1 }{R_{\epsilon_t} + 2d_{2,\epsilon_t} + \delta + \delta_2} &\leq c_4 \epsilon_t^5 + \dfrac{c_5(\epsilon_t)}{d_{2,\epsilon_t} + \delta} + \dfrac{c_1}{\delta_2},\\
		&\leq c_4 \epsilon_0^2 + \mathcal{O}(\epsilon_0^2) + \dfrac{c_1}{\delta_2},
	\end{align*}
	where we use $c_5(\epsilon_t) = \mathcal{O}(\epsilon_t^3 \ln (1/\epsilon_t))$ as in Proposition \ref{prop:LepsReps} and $d_{2,\epsilon_t} + \delta = \Omega(\epsilon_t \ln (1/\epsilon_t))$.
	
	For the second term, using
	$$\norm{\nabla_y R_{\epsilon_t}}^2 = \norm{y + \epsilon_t^3 x}^2 = \mathcal{O}(\epsilon_t^3 \norm{x}^2 + \norm{y}^2),$$
	and the lower bound of $R_{\epsilon_t}$ in Lemma \ref{lem:Repsup} again,
	$$R_{\epsilon_t} + d_{2,\epsilon_t} = \Omega(\epsilon_t \norm{x}^2 + \norm{y}^2),$$
	we see that, for some constant $\overline{C}_2 > 0$,
	$$\dfrac{\norm{\nabla_y R_{\epsilon_t}}^2}{\left(R_{\epsilon_t} + 2d_{2,\epsilon_t} + \delta + \delta_2\right)^2} \leq \overline{C}_2 \dfrac{\epsilon_t^3 \norm{x}^2 + \norm{y}^2}{\left(\epsilon_t \norm{x}^2 + \norm{y}^2\right)^2 + \delta_2^2} ,$$
	which can clearly be bounded above independent of $t$.
\end{proof}

\subsection{Dissipation of the distorted entropy}\label{subsec:diss}

Recall the notation in Section \ref{subsec:density} that $h_t = \frac{d m_t}{d \pi_{\epsilon_t}}$ is the density of the process at time $t$ with respect to the distribution $\pi_{\epsilon_t}$. For non-negative $h : \mathbb{R}^{2d} \to \mathbb{R}^+$, we define the functionals
\begin{align*}
	\Phi_0(h) &:= h \ln h, \\
	\Phi_1(h) &:= \dfrac{\norm{\nabla_x h + \nabla_y h}^2}{h}, 
\end{align*}
and we recall that in Lemma \ref{lem:distortelow} we introduce
\begin{align*}
	\gamma(\epsilon_t) &= \dfrac{1}{2} + 2 \left(\epsilon_t \norm{\nabla_x H_{\epsilon_t}}_{\infty} + 1 + \dfrac{1}{\epsilon_t}\right)^2, \\
	\Psi_{\epsilon_t}(h) &= \Phi_1(h) + \gamma(\epsilon_t) \Phi_0(h).
\end{align*}
With these notations in mind we define the distorted entropy as, for $t \geq 0$,
\begin{align}\label{def:Ht}
	\mathbf{H}(t) := \int \Psi_{\epsilon_t}(h_t) \, d \pi_{\epsilon_t} = \int \Phi_1(h_t) + \gamma(\epsilon_t) \Phi_0(h_t) \, d \pi_{\epsilon_t} = \int \dfrac{\norm{\nabla_x h_t + \nabla_y h_t}^2}{h_t} \, d \pi_{\epsilon_t} + \gamma(\epsilon_t) \mathrm{Ent}_{\pi_{\epsilon_t}}(h_t).
\end{align}
The distorted entropy $\mathbf{H}$ is not to be confused with $H_{\epsilon}$ introduced in \eqref{eq:Heps}. In order to compute the time derivative of the distorted entropy $\mathbf{H}$ and pass the derivative into the integral, we will first be working with its truncated version, which is defined to be, for $m \in \mathbb{N}$,
\begin{align}\label{def:Hetamt}
	\mathbf{H}_{\eta_m,\epsilon_t}(t) := \int \eta_{m,\epsilon_t} \left(\Phi_1(h_t) + \gamma(\epsilon_t) \Phi_0(h_t)\right) \, d \pi_{\epsilon_t},
\end{align}
where we recall $\eta_{m,\epsilon_t}$ is the mollifier introduced in \eqref{def:etameps}. Taking the time derivative of \eqref{def:Hetamt} and thanks to Lemma \ref{lem:Lepseta}, we have
\begin{align}\label{eq:timedHeta}
	\dfrac{d}{dt} \mathbf{H}_{\eta_m,\epsilon_t}(t) = \int \eta_{m,\epsilon_t} \dfrac{d}{dt}\left(\Psi_{\epsilon_t}(h_t)\right) \, d \pi_{\epsilon_t} + \epsilon^{\prime}_t \int \dfrac{d}{d\epsilon_t} \eta_{m,\epsilon_t} \Psi_{\epsilon_t}(h_t)\pi_{\epsilon_t}\, dxdy.
\end{align}
We will handle and give upper bound on each of the two terms on the right hand side above separately.

For the first term on the right hand side of \eqref{eq:timedHeta}, we have the following upper bound:
\begin{lemma}\label{lem:intetad}
	For the same constant $C > 0$ that appears in Lemma \ref{lem:Lepseta},
	$$\int \eta_{m,\epsilon_t} \dfrac{d}{dt}\left(\Psi_{\epsilon_t}(h_t)\right) \, d \pi_{\epsilon_t} \leq - \int \eta_{m,\epsilon_t} \dfrac{\norm{\nabla h_t}^2}{h_t} \, d\pi_{\epsilon_t} + \dfrac{C}{m} \int \Psi_{\epsilon_t}(h_t) + \gamma(\epsilon_t) e^{-1} \, d\pi_{\epsilon_t}.$$
\end{lemma}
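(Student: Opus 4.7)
My plan is to compute the time derivative of $\Psi_{\epsilon_t}(h_t)$ explicitly using the forward equation for $h_t$, apply the Gamma-calculus identity \eqref{def:GammaLeps}, and then integrate by parts against $\eta_{m,\epsilon_t}$ to isolate the two quantities that the auxiliary lemmas of Sections \ref{subsec:gamma} and \ref{subsec:trundiff} bound directly.

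First, I would differentiate $\Psi_{\epsilon_t}(h_t)=\Phi_1(h_t)+\gamma(\epsilon_t)\Phi_0(h_t)$ in $t$. Since $h_t=dm_t/d\pi_{\epsilon_t}$ and $\pi_{\epsilon_t}$ is the instantaneous invariant measure of $L_{\epsilon_t}$, at frozen temperature the evolution reads $\partial_t h_t = L^*_{\epsilon_t}h_t$; any remaining $\epsilon_t'$-weighted pieces (from the $t$-dependence of $\gamma(\epsilon_t)$ and from $\partial_t\pi_{\epsilon_t}$) belong to the separate $\epsilon_t'$ term already isolated in \eqref{eq:timedHeta}, so they need not be tracked here. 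This reduces the task to bounding $\int \eta_{m,\epsilon_t}\,D_{h_t}\Psi_{\epsilon_t}(h_t)\cdot L^*_{\epsilon_t}h_t\,d\pi_{\epsilon_t}$.

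Second, applying definition \eqref{def:GammaLeps} with $\Phi=\Psi_{\epsilon_t}$ yields
\[D_{h_t}\Psi_{\epsilon_t}(h_t)\cdot L^*_{\epsilon_t}h_t = L^*_{\epsilon_t}\Psi_{\epsilon_t}(h_t) - 2\Gamma_{L^*_{\epsilon_t},\Psi_{\epsilon_t}}(h_t).\]
I would then integrate by parts against $d\pi_{\epsilon_t}$ to transfer $L^*_{\epsilon_t}$ onto $\eta_{m,\epsilon_t}$, obtaining
\[\int \eta_{m,\epsilon_t}L^*_{\epsilon_t}\Psi_{\epsilon_t}(h_t)\,d\pi_{\epsilon_t}=\int (L_{\epsilon_t}\eta_{m,\epsilon_t})\,\Psi_{\epsilon_t}(h_t)\,d\pi_{\epsilon_t}.\]
This step is legitimate because $\eta_{m,\epsilon_t}$ is compactly supported by Lemma \ref{lem:Lepseta}\eqref{it:compact}, so no boundary terms appear. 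The Gamma contribution is handled directly by Lemma \ref{lem:distortelow}: $2\Gamma_{L^*_{\epsilon_t},\Psi_{\epsilon_t}}(h_t)\geq \norm{\nabla h_t}^2/h_t$, which produces the first term on the right-hand side of the claimed inequality.

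Third, the remaining integral $\int (L_{\epsilon_t}\eta_{m,\epsilon_t})\,\Psi_{\epsilon_t}(h_t)\,d\pi_{\epsilon_t}$ requires care because $\Psi_{\epsilon_t}(h_t)$ can be negative (using $h\ln h\geq -e^{-1}$, one has $\gamma(\epsilon_t)\Phi_0(h_t)\geq -\gamma(\epsilon_t)e^{-1}$), whereas Lemma \ref{lem:Lepseta}\eqref{it:Leta} provides only the one-sided bound $L_{\epsilon_t}\eta_{m,\epsilon_t}\leq C/m$. The remedy is a shift:
\[\int (L_{\epsilon_t}\eta_{m,\epsilon_t})\,\Psi_{\epsilon_t}(h_t)\,d\pi_{\epsilon_t}=\int (L_{\epsilon_t}\eta_{m,\epsilon_t})\bigl[\Psi_{\epsilon_t}(h_t)+\gamma(\epsilon_t)e^{-1}\bigr]\,d\pi_{\epsilon_t}-\gamma(\epsilon_t)e^{-1}\int L_{\epsilon_t}\eta_{m,\epsilon_t}\,d\pi_{\epsilon_t}.\]
The last integral vanishes since $\pi_{\epsilon_t}$ is $L_{\epsilon_t}$-invariant and $\eta_{m,\epsilon_t}$ is compactly supported, while the bracket in the first integral is nonnegative by the $-e^{-1}$ lower bound on $h\ln h$, so the pointwise estimate $L_{\epsilon_t}\eta_{m,\epsilon_t}\leq C/m$ delivers exactly $\frac{C}{m}\int[\Psi_{\epsilon_t}(h_t)+\gamma(\epsilon_t)e^{-1}]\,d\pi_{\epsilon_t}$.

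The main obstacle I foresee is this sign bookkeeping: without the shift by $\gamma(\epsilon_t)e^{-1}$ one cannot legitimately exploit a one-sided bound on $L_{\epsilon_t}\eta_{m,\epsilon_t}$. A secondary technical point is rigorously justifying the integration by parts for the non-reversible kinetic adjoint $L^*_{\epsilon_t}$; the compact support of $\eta_{m,\epsilon_t}$ given by Lemma \ref{lem:Lepseta}\eqref{it:compact} is precisely what makes the boundary terms vanish and the invariance identity $\int L_{\epsilon_t}\eta_{m,\epsilon_t}\,d\pi_{\epsilon_t}=0$ rigorous.
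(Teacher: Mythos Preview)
Your proposal is correct and follows essentially the same route as the paper: reduce to $D_{h_t}\Psi_{\epsilon_t}(h_t)\cdot L^*_{\epsilon_t}h_t$, apply \eqref{def:GammaLeps}, pass $L^*_{\epsilon_t}$ to $\eta_{m,\epsilon_t}$ by adjointness, invoke Lemma~\ref{lem:distortelow} on the Gamma term, and shift by $\gamma(\epsilon_t)e^{-1}$ so that the one-sided bound of Lemma~\ref{lem:Lepseta}\eqref{it:Leta} applies. Your explicit observation that $\int L_{\epsilon_t}\eta_{m,\epsilon_t}\,d\pi_{\epsilon_t}=0$ by invariance is the justification the paper leaves implicit when it asserts the fifth equality.
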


\begin{proof}
	\begin{align*}
		\int \eta_{m,\epsilon_t} \dfrac{d}{dt}\left(\Psi_{\epsilon_t}(h_t)\right) \, d \pi_{\epsilon_t} &= \int \eta_{m,\epsilon_t} D_{h_t} \Psi_{\epsilon_t}(h_t) \cdot \partial_t h_t \, d\pi_{\epsilon_t} \\
		&= \int \eta_{m,\epsilon_t} D_{h_t} \Psi_{\epsilon_t}(h_t) \cdot \partial_t m_t \, dxdy \\
		&= \int \eta_{m,\epsilon_t} D_{h_t} \Psi_{\epsilon_t}(h_t) \cdot L_{\epsilon_t}^* h_t \, d\pi_{\epsilon_t} \\
		&= - \int \eta_{m,\epsilon_t} 2 \Gamma_{L_{\epsilon_t}^*,\Psi_{\epsilon_t}}(h_t) \, d\pi_{\epsilon_t} + \int \eta_{m,\epsilon_t} L_{\epsilon_t}^*\left(\Psi_{\epsilon_t}(h_t)\right) \, d\pi_{\epsilon_t} \\
		&= - \int \eta_{m,\epsilon_t} 2 \Gamma_{L_{\epsilon_t}^*,\Psi_{\epsilon_t}}(h_t) \, d\pi_{\epsilon_t} + \int L_{\epsilon_t} \eta_{m,\epsilon_t} \left(\Psi_{\epsilon_t}(h_t) + \gamma(\epsilon_t) e^{-1}\right) \, d\pi_{\epsilon_t} \\
		&\leq - \int \eta_{m,\epsilon_t} \dfrac{\norm{\nabla h_t}^2}{h_t} \, d\pi_{\epsilon_t} + \dfrac{C}{m} \int \Psi_{\epsilon_t}(h_t) + \gamma(\epsilon_t) e^{-1} \, d\pi_{\epsilon_t},
	\end{align*}
	where the third equality follows from \cite[discussion below $(C.20)$]{CKP20} and the fourth equality comes from classical Gamma calculus as in \eqref{def:GammaLeps}. For the fifth equality, we add $\gamma(\epsilon_t) e^{-1}$ to ensure that $\Psi_{\epsilon_t}(h_t) + \gamma(\epsilon_t) e^{-1} \geq 0$. For the last inequality, we make use of Lemma \ref{lem:distortelow} for the first term and Lemma \ref{lem:Lepseta} for the second term.
\end{proof}

We proceed to consider the second term on the right hand side of \eqref{eq:timedHeta}. Using the chain rule, we have
\begin{align}\label{eq:epsprimed}
	\epsilon^{\prime}_t \int \dfrac{d}{d\epsilon_t} \left(\eta_{m,\epsilon_t} \Psi_{\epsilon_t}(h_t)\pi_{\epsilon_t} \right)\, dxdy &= \epsilon^{\prime}_t \int \left(\dfrac{d}{d\epsilon_t} \eta_{m,\epsilon_t}\right) \Psi_{\epsilon_t}(h_t)\pi_{\epsilon_t} \, dxdy + \epsilon^{\prime}_t \int \eta_{m,\epsilon_t} \left(\dfrac{d}{d\epsilon_t}  \Psi_{\epsilon_t}(h_t)\pi_{\epsilon_t} \right)\, dxdy \nonumber \\
	&= \mathcal{O}\left(\dfrac{1}{m}\right) + \epsilon^{\prime}_t \int \eta_{m,\epsilon_t} \left(\dfrac{d}{d\epsilon_t}  \Psi_{\epsilon_t}(h_t)\pi_{\epsilon_t} \right)\, dxdy,
\end{align}
where the second equality follows from the proof in Lemma \ref{lem:Lepseta}. Now, as in the proof of \cite[Lemma $15$]{M18}, we compute that
$$\dfrac{\partial}{\partial \epsilon_t} \ln \pi_{\epsilon_t}(x,y) = - \dfrac{\partial}{\partial \epsilon_t} H_{\epsilon_t} + \dfrac{1}{\epsilon_t^2}\norm{y}^2 - \int \left(- \dfrac{\partial}{\partial_{\epsilon_t}} H_{\epsilon_t} + \dfrac{\norm{y}^2}{\epsilon_t^2} \, d\pi_{\epsilon_t}\right).$$
Using the quadratic growth assumption on $U$ in Assumption \ref{assump:main}, there exist a subexponential function $\xi_1$ such that 
\begin{align}\label{eq:partialepsbd}
	|\partial_{\epsilon_t} \ln \pi_{\epsilon_t}(x,y)| + \norm{\nabla \partial_{\epsilon_t} \ln \pi_{\epsilon_t}(x,y)}^2 \leq \xi_1(\epsilon_t)(1 + \norm{x}^2 + \norm{y}^2).
\end{align}

\begin{lemma}\label{lem:epsprimed}
	There exist subexponential function $\xi(\epsilon_t)$ such that 
	$$\epsilon^{\prime}_t \int \eta_{m,\epsilon_t} \left(\dfrac{d}{d\epsilon_t}  \Psi_{\epsilon_t}(h_t)\pi_{\epsilon_t} \right)\, dxdy \leq |\epsilon^{\prime}_t| \xi(\epsilon_t) \left( \int \eta_{m,\epsilon_t} \Psi_{\epsilon_t}(h_t) \, d\pi_{\epsilon_t} +  \int(1+\norm{x}^2 + \norm{y}^2) m_t(x,y) \, dxdy\right).$$
\end{lemma}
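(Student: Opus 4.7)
I apply the product rule to $\Psi_{\epsilon_t}(h_t)\pi_{\epsilon_t}$ regarded as a function of $\epsilon_t$. Since $\Phi_1(h)$ and $\Phi_0(h)=h\ln h$ do not depend on $\epsilon_t$, only $\gamma(\epsilon_t)$ in $\Psi_{\epsilon_t}=\Phi_1+\gamma(\epsilon_t)\Phi_0$ contributes to $\partial_{\epsilon_t}\Psi_{\epsilon_t}$, and $\partial_{\epsilon_t}\pi_{\epsilon_t}=\pi_{\epsilon_t}\partial_{\epsilon_t}\ln\pi_{\epsilon_t}$. Hence the left-hand side decomposes as
\[
\epsilon'_t\gamma'(\epsilon_t)\int\eta_{m,\epsilon_t}\,h_t\ln h_t\,d\pi_{\epsilon_t}\;+\;\epsilon'_t\int\eta_{m,\epsilon_t}\,\Psi_{\epsilon_t}(h_t)\,\partial_{\epsilon_t}\ln\pi_{\epsilon_t}\,d\pi_{\epsilon_t},
\]
and I bound the two pieces separately.

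For the first piece, the identity $\gamma\Phi_0=\Psi-\Phi_1\leq\Psi$ (using $\Phi_1\geq 0$) gives $\int\eta\,h_t\ln h_t\,d\pi_{\epsilon_t}\leq\gamma(\epsilon_t)^{-1}\int\eta\,\Psi_{\epsilon_t}(h_t)\,d\pi_{\epsilon_t}$, while the pointwise lower bound $h_t\ln h_t\geq -1/e$ gives the reverse inequality $\int\eta\,h_t\ln h_t\,d\pi_{\epsilon_t}\geq -1/e$. Under Assumption~\ref{assump:main}, $\gamma(\epsilon_t)$ and $|\gamma'(\epsilon_t)|$ are subexponential in $1/\epsilon_t$, so this piece is controlled by $|\epsilon'_t|\xi_a(\epsilon_t)\int\eta\,\Psi d\pi_{\epsilon_t}$ plus a subexponential-in-$\epsilon_t$ constant, the latter being absorbed into the moment term since $\int(1+\norm{x}^2+\norm{y}^2)\,m_t\,dxdy\geq 1$.

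For the second piece, estimate \eqref{eq:partialepsbd} yields
\[
\Bigl|\epsilon'_t\int\eta\,\Psi_{\epsilon_t}(h_t)\,\partial_{\epsilon_t}\ln\pi_{\epsilon_t}\,d\pi_{\epsilon_t}\Bigr|\leq|\epsilon'_t|\xi_1(\epsilon_t)\int\eta\,|\Psi_{\epsilon_t}(h_t)|\,(1+\norm{x}^2+\norm{y}^2)\,d\pi_{\epsilon_t}.
\]
Using the sign inequality $|\Psi|\leq\Psi+2\gamma/e$ (which follows from $\Psi\geq -\gamma/e$, itself a consequence of $\Phi_1\geq 0$ and $h\ln h\geq -1/e$), the absolute value is removed. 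The $2\gamma/e$ contribution produces $\int(1+\norm{x}^2+\norm{y}^2)\,d\pi_{\epsilon_t}$, which is subexponential in $\epsilon_t$ by the Gaussian-type tails of $\pi_{\epsilon_t}$ and is therefore absorbed into the moment term. The remaining piece $\int\eta\,\Psi\,(1+\norm{x}^2+\norm{y}^2)\,d\pi_{\epsilon_t}$ is split via $\Psi=\Phi_1+\gamma h\ln h$; the $h\ln h$ part, multiplied by $w:=\norm{x}^2+\norm{y}^2$, is controlled by the Fenchel/Young inequality $h\,w\leq \lambda^{-1}h\ln h+\lambda^{-1}e^{\lambda w}$ with $\lambda$ a small fixed constant for which $\int e^{\lambda w}d\pi_{\epsilon_t}$ remains subexponential in $\epsilon_t$ (using that $\pi_{\epsilon_t}$ has tails decaying like $\exp(-a_1\norm{x}^2/(M_4+\epsilon_t)-\norm{y}^2/(2\epsilon_t))$), together with $\int\eta\,h_t\ln h_t\,d\pi_{\epsilon_t}\leq\gamma^{-1}\int\eta\,\Psi d\pi_{\epsilon_t}$; the $\Phi_1$-weighted part is handled analogously after invoking the compactness of $\mathrm{supp}(\eta_{m,\epsilon_t})$ together with the lower bound of $R_{\epsilon_t}$ from Lemma~\ref{lem:Repsup}. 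Collecting all subexponential factors into a single $\xi(\epsilon_t)$ delivers the claim.

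\paragraph{Main obstacle.} The delicate step is separating $\int\eta\,\Psi d\pi_{\epsilon_t}$ from the cross term $\int\eta\,\Psi\,(\norm{x}^2+\norm{y}^2)\,d\pi_{\epsilon_t}$, since $\Psi$ is sign-indefinite (because $h\ln h$ is) and the quadratic weight coming from $\partial_{\epsilon_t}\ln\pi_{\epsilon_t}$ is unbounded. The Young-type inequality, combined with the $|\Psi|\leq\Psi+2\gamma/e$ trick and the Gaussian tails of $\pi_{\epsilon_t}$, is what cleanly turns the product $h\cdot w$ into entropy plus a finite exponential moment, so that the bound closes in the form required by the lemma.
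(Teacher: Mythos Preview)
Your decomposition has a genuine gap: you assert that ``$\Phi_1(h)$ and $\Phi_0(h)=h\ln h$ do not depend on $\epsilon_t$'', but $h_t=m_t/\pi_{\epsilon_t}$ \emph{does} depend on $\epsilon_t$ through $\pi_{\epsilon_t}$. The product rule therefore produces an additional term $D_h\Psi_{\epsilon_t}(h_t)\cdot(\partial_{\epsilon_t}h_t)\,\pi_{\epsilon_t}$, with $\partial_{\epsilon_t}h_t=-h_t\,\partial_{\epsilon_t}\ln\pi_{\epsilon_t}$, which you have simply dropped. Once this term is restored, you are again faced with controlling quantities like $\int \eta\,(\ln h_t)\,m_t\,\partial_{\epsilon_t}\ln\pi_{\epsilon_t}$ and the analogous $\Phi_1$ contribution, i.e.\ precisely the ``cross terms'' you try to handle in your second piece. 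Your proposed tools for those do not work as written: the Young inequality $hw\le \lambda^{-1}h\ln h+\lambda^{-1}e^{\lambda w}$ bounds $\int h_t\,w\,d\pi_{\epsilon_t}$, not $\int (h_t\ln h_t)\,w\,d\pi_{\epsilon_t}$; and bounding the $\Phi_1\cdot w$ term via the compactness of $\mathrm{supp}(\eta_{m,\epsilon_t})$ yields constants of order $e^{m^2}$, which is fatal since the lemma must be uniform in $m$ (one sends $m\to\infty$ in Proposition~\ref{prop:Hprimet}).

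The paper sidesteps all of this by rewriting the integrand \emph{before} differentiating: since $\Phi_0(h_t)\pi_{\epsilon_t}=m_t\ln(m_t/\pi_{\epsilon_t})$ and $\Phi_1(h_t)\pi_{\epsilon_t}=\|M_1\nabla\ln(m_t/\pi_{\epsilon_t})\|^2 m_t$, and $m_t$ is independent of $\epsilon_t$, the $\epsilon_t$-derivative lands only on $\ln\pi_{\epsilon_t}$ and produces no products of $\Psi$ with the quadratic weight. One gets directly $\partial_{\epsilon_t}[\gamma\Phi_0(h_t)\pi_{\epsilon_t}]=-\gamma(\partial_{\epsilon_t}\ln\pi_{\epsilon_t})m_t+\gamma'\Phi_0(h_t)\pi_{\epsilon_t}$ and $\partial_{\epsilon_t}[\Phi_1(h_t)\pi_{\epsilon_t}]=-2\,M_1\nabla\ln h_t\cdot M_1\nabla\partial_{\epsilon_t}\ln\pi_{\epsilon_t}\,m_t$; the first is bounded by \eqref{eq:partialepsbd} and $|\Phi_0|\le\Phi_0+2/e$, the second by Cauchy--Schwarz and \eqref{eq:partialepsbd}. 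This is the missing idea you need.
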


\begin{proof}
	\begin{align*}
		\int \eta_{m,\epsilon_t} \partial_{\epsilon_t} \left(\gamma(\epsilon_t) \Phi_0(h_t) \pi_{\epsilon_t}\right)\, dxdy &= \int \eta_{m,\epsilon_t} \gamma(\epsilon_t) \left(- \partial_{\epsilon_t} \ln \pi_{\epsilon_t}\right) m_t(x,y)\, dxdy + \int \eta_{m,\epsilon_t} \gamma^{\prime}(\epsilon_t) \Phi_0(h_t) d \pi_{\epsilon_t} \\
		&\leq \int \eta_{m,\epsilon_t} \gamma(\epsilon_t) \xi_1(\epsilon_t)(1 + \norm{x}^2 + \norm{y}^2) m_t(x,y)\, dxdy \\
		&\quad + |\gamma^{\prime}(\epsilon_t)| \left( \int \eta_{m,\epsilon_t} \Phi_0(h_t) d \pi_{\epsilon_t} + \dfrac{2}{e}\right),
	\end{align*}
	where we use $\Phi_0(h_t) \pi_{\epsilon_t} = \ln \left(\frac{m_t}{\pi_{\epsilon_t}}\right) m_t$ in the equality, and in the inequality we utilize $|\Phi_0(h)| \leq \Phi_0(h) + \frac{2}{e}$ as well as \eqref{eq:partialepsbd}. Next, for matrix $M_1$ since $\Phi_1(h_t) \pi_{\epsilon_t} = \norm{M_1 \nabla \ln \frac{m_t}{\pi_{\epsilon_t}}}^2 m_t$, we consider
	\begin{align*}
		\int \eta_{m,\epsilon_t} \partial_{\epsilon_t} \left(\Phi_1(h_t) \pi_{\epsilon_t}\right)\, dxdy &= -2 \int \eta_{m,\epsilon_t} M_1 \nabla \ln \left(\dfrac{m_t}{\pi_{\epsilon_t}}\right) \cdot M_1 \nabla \partial_{\epsilon_t} \ln \pi_{\epsilon_t} m_t \, dxdy \\
		&\leq \int \eta_{m,\epsilon_t} \Phi_1(h_t) \, d\pi_{\epsilon_t} + 2 \xi_1(\epsilon_t) \int(1+\norm{x}^2 + \norm{y}^2) m_t(x,y) \, dxdy,
	\end{align*}
	where the inequality follows again from \eqref{eq:partialepsbd}. The desired result follows from taking
	$$\xi(\epsilon_t) = 1 + 2 \gamma(\epsilon_t) \xi_1(\epsilon_t) + \dfrac{|\gamma^{\prime}(\epsilon_t)|}{\gamma(\epsilon_t)}.$$
\end{proof}

We write the Fisher information at time $t$ to be
$$I(t) = \int \dfrac{\norm{\nabla h_t}^2}{h_t} \, d \pi_{\epsilon_{t}}.$$

Collecting the results of Lemma \ref{lem:intetad}, \eqref{eq:epsprimed} and Lemma \ref{lem:epsprimed} and substitute these back into \eqref{eq:timedHeta}, we obtain the following:

\begin{proposition}\label{prop:Hprimet}
	$$\mathbf{H}^{\prime}(t) \leq - I(t) + |\epsilon_t^{\prime}| \xi(\epsilon_t) (\mathbf{H}(t) + \mathbb{E}\left(1 + \norm{X_t}^2 + \norm{Y_t}^2\right)).$$
\end{proposition}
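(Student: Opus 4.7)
The plan is to start from the truncated distorted entropy $\mathbf{H}_{\eta_m,\epsilon_t}(t)$ defined in \eqref{def:Hetamt}, whose time derivative is already decomposed in \eqref{eq:timedHeta} into a "dissipation" term (the $t$-derivative of $\Psi_{\epsilon_t}(h_t)$ integrated against $\pi_{\epsilon_t}$) and a "temperature change" term (the $\epsilon_t$-derivative of the mollifier together with the stationary measure). I would then bound each term separately using the two preceding lemmas, and finally let $m \to \infty$ to remove the mollifier and recover $\mathbf{H}(t)$.

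First, I would plug Lemma \ref{lem:intetad} into the first integral of \eqref{eq:timedHeta}. This immediately produces the dissipative contribution $-\int \eta_{m,\epsilon_t} \norm{\nabla h_t}^2/h_t \, d\pi_{\epsilon_t}$ (which converges, monotonically in $m$, to $-I(t)$) plus a remainder of order $C/m$ that is integrable thanks to the moment estimates on $(X_t,Y_t)$ from Proposition \ref{prop:XtYtest} and the polynomial growth of $\gamma(\epsilon_t)$. For the second integral in \eqref{eq:timedHeta}, I would combine \eqref{eq:epsprimed} (which isolates another $\mathcal{O}(1/m)$ error using item \eqref{it:Leta} of Lemma \ref{lem:Lepseta}) with Lemma \ref{lem:epsprimed}, yielding
\begin{align*}
\epsilon_t^{\prime} \int \tfrac{d}{d\epsilon_t}\bigl(\eta_{m,\epsilon_t}\Psi_{\epsilon_t}(h_t)\pi_{\epsilon_t}\bigr) dxdy \leq \mathcal{O}(1/m) + |\epsilon_t^{\prime}|\xi(\epsilon_t)\Bigl(\int \eta_{m,\epsilon_t}\Psi_{\epsilon_t}(h_t)\,d\pi_{\epsilon_t} + \mathbb{E}(1+\norm{X_t}^2+\norm{Y_t}^2)\Bigr).
\end{align*}
Summing the two bounds gives a differential inequality for $\mathbf{H}_{\eta_m,\epsilon_t}(t)$ of exactly the desired form, modulo the mollifier $\eta_{m,\epsilon_t}$ and an $\mathcal{O}(1/m)$ error.

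The final step is to pass $m\to\infty$. Since $\eta_{m,\epsilon_t} \uparrow 1$ pointwise by Lemma \ref{lem:Lepseta}\eqref{it:converge} and the integrands $\Psi_{\epsilon_t}(h_t)$ and $\norm{\nabla h_t}^2/h_t$ are bounded below (up to an $\mathcal{O}(\gamma(\epsilon_t))$ constant needed to handle the possibly negative part of $\Phi_0$), monotone convergence yields
\begin{align*}
\int \eta_{m,\epsilon_t}\Psi_{\epsilon_t}(h_t)\,d\pi_{\epsilon_t} \longrightarrow \mathbf{H}(t), \qquad \int \eta_{m,\epsilon_t}\frac{\norm{\nabla h_t}^2}{h_t}\,d\pi_{\epsilon_t} \longrightarrow I(t).
\end{align*}
The same truncation argument shows $\mathbf{H}_{\eta_m,\epsilon_t}(t)\to \mathbf{H}(t)$ for each $t$, and a standard argument (integrating the differential inequality in time and then sending $m\to\infty$ in integral form, so that the $\mathcal{O}(1/m)$ terms disappear) transfers the inequality to the limit, giving $\mathbf{H}^{\prime}(t) \leq -I(t) + |\epsilon_t^{\prime}|\xi(\epsilon_t)(\mathbf{H}(t) + \mathbb{E}(1+\norm{X_t}^2+\norm{Y_t}^2))$.

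The main obstacle is justifying that the time differentiation commutes with the limit $m\to\infty$: one needs enough integrability of $\Psi_{\epsilon_t}(h_t)\pi_{\epsilon_t}$ and of its $t$- and $\epsilon_t$-derivatives to pass from the mollified identity to the limiting inequality. This is exactly why the truncation was introduced, and the $\mathcal{O}(1/m)$ control provided by Lemma \ref{lem:Lepseta}\eqref{it:Leta}, together with the polynomial moment bounds of Proposition \ref{prop:XtYtest} and the polynomial dependence of $\gamma(\epsilon_t),\xi(\epsilon_t)$ on $1/\epsilon_t$, is precisely what makes the limiting procedure rigorous.
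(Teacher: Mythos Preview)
Your proposal is correct and follows essentially the same route as the paper: combine Lemma~\ref{lem:intetad}, \eqref{eq:epsprimed} and Lemma~\ref{lem:epsprimed} inside \eqref{eq:timedHeta} to get a differential inequality for $\mathbf{H}_{\eta_m,\epsilon_t}(t)$ with an $\mathcal{O}(1/m)$ error, then integrate in time and pass $m\to\infty$. The only cosmetic difference is that the paper invokes Fatou's lemma (rather than monotone convergence) at the limiting step, which works equally well here since $\eta_{m,\epsilon_t}\uparrow 1$ and the relevant integrands are bounded below.
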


\begin{proof}
	First, according to the results of Lemma \ref{lem:intetad}, \eqref{eq:epsprimed} and Lemma \ref{lem:epsprimed}, we put these back into \eqref{eq:timedHeta} to obtain
	\begin{align*}
		\dfrac{d}{dt} \mathbf{H}_{\eta_m,\epsilon_t}(t) &\leq - \int \eta_{m,\epsilon_t} \dfrac{\norm{\nabla h_t}^2}{h_t} \, d\pi_{\epsilon_t} + \mathcal{O}\left(\dfrac{1}{m}\right) \\
		&\quad + |\epsilon^{\prime}_t| \xi(\epsilon_t) \left( \int \eta_{m,\epsilon_t} \Psi_{\epsilon_t}(h_t) \, d\pi_{\epsilon_t} +  \int(1+\norm{x}^2 + \norm{y}^2) m_t(x,y) \, dxdy\right).
	\end{align*}
	Integrating from $s$ to $t$ leads to
	\begin{align*}
		\mathbf{H}_{\eta_m,\epsilon_t}(t) - \mathbf{H}_{\eta_m,\epsilon_s}(s) &\leq - \int_s^t \int \eta_{m,\epsilon_u} \dfrac{\norm{\nabla h_u}^2}{h_u} \, d\pi_{\epsilon_u} du + \int_s^t \mathcal{O}\left(\dfrac{1}{m}\right) \, du \\
		&\quad + \int_s^t |\epsilon^{\prime}_u| \xi(\epsilon_u) \left( \int \eta_{m,\epsilon_u} \Psi_{\epsilon_u}(h_u) \, d\pi_{\epsilon_u} +  \int(1+\norm{x}^2 + \norm{y}^2) m_u(x,y) \, dxdy\right) \, du.
	\end{align*}
	The desired result follows by taking $m \to \infty$, Fatou's lemma and Lemma \ref{lem:Lepseta}.
\end{proof}

Finally, the objective of this section is to prove the following bound on the distorted entropy:

\begin{proposition}\label{prop:Htbd}
	Under Assumption \ref{assump:main}, we have, for any $\alpha > 0$, there exists constant $B > 0$ such that for large enough $t$, 
	$$\mathbf{H}(t) \leq B \left(\dfrac{1}{t}\right)^{1 - \frac{c_*}{E} - \alpha}.$$
\end{proposition}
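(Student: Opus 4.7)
The strategy is to combine the log-Sobolev inequality of Proposition~\ref{prop:logSobolev} with the entropy dissipation bound of Proposition~\ref{prop:Hprimet} and the polynomial moment control of Proposition~\ref{prop:XtYtest} to derive a first-order differential inequality for $\mathbf{H}(t)$ whose Gr\"onwall-type solution yields the advertised polynomial decay rate.

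The first step is to bound $\mathbf{H}(t)$ from above by a multiple of the Fisher information $I(t)$. Using the elementary inequality $\norm{\nabla_x h_t + \nabla_y h_t}^2 \leq 2 \norm{\nabla h_t}^2$, the first summand in \eqref{def:Ht} is at most $2 I(t)$. For the relative-entropy summand, Proposition~\ref{prop:logSobolev} applied at temperature $\epsilon_t$ with any fixed small $\delta_1 > 0$ gives, for $\epsilon_t$ small,
\begin{equation*}
\gamma(\epsilon_t)\, \mathrm{Ent}_{\pi_{\epsilon_t}}(h_t) \leq \gamma(\epsilon_t)\, p_{\delta_1}(1/\epsilon_t)\, e^{c_*/\epsilon_t}\, I(t).
\end{equation*}
Summing the two estimates yields $\mathbf{H}(t) \leq \widetilde{\xi}(\epsilon_t)\, e^{c_*/\epsilon_t}\, I(t)$, where $\widetilde{\xi}$ absorbs $\gamma(\epsilon_t) p_{\delta_1}(1/\epsilon_t)$ and the additive constant $2$, and is subexponential in $\epsilon_t$.

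Plugging the resulting lower bound on $I(t)$ into Proposition~\ref{prop:Hprimet} gives
\begin{equation*}
\mathbf{H}'(t) \leq -\frac{\mathbf{H}(t)}{\widetilde{\xi}(\epsilon_t)\, e^{c_*/\epsilon_t}} + |\epsilon_t'|\xi(\epsilon_t)\, \mathbf{H}(t) + |\epsilon_t'|\xi(\epsilon_t)\, \mathbb{E}\bigl(1 + \norm{X_t}^2 + \norm{Y_t}^2\bigr).
\end{equation*}
Under the logarithmic cooling $\epsilon_t = E/\ln t$ we have $e^{c_*/\epsilon_t} = t^{c_*/E}$, while $|\epsilon_t'| = \mathcal{O}(1/t)$ and both $\xi(\epsilon_t)$ and $\widetilde{\xi}(\epsilon_t)$ are of order $t^{o(1)}$. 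Together with the moment control $\mathbb{E}(1 + \norm{X_t}^2 + \norm{Y_t}^2) \leq k(1+t)^{\alpha/3}$ of Proposition~\ref{prop:XtYtest}, this reduces, for any sufficiently small $\alpha > 0$ and large enough $t$, to an inequality of the form
\begin{equation*}
\mathbf{H}'(t) \leq -\lambda(t)\, \mathbf{H}(t) + r(t), \qquad \lambda(t) \geq c\, t^{-c_*/E - \alpha/3}, \qquad r(t) \leq c'\, t^{-1 + \alpha}.
\end{equation*}
The assumption $E > c_*$ is crucial here, since it guarantees $-c_*/E - \alpha/3 > -1 + \alpha/3$ for small $\alpha$, so the damping term dominates the linear perturbation $|\epsilon_t'|\xi(\epsilon_t)\mathbf{H}(t)$ and the effective damping rate remains of order $t^{-c_*/E - \alpha/3}$.

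Solving this inequality by the variation-of-constants formula, or equivalently by the standard asymptotic comparison $\mathbf{H}(t) \lesssim r(t)/\lambda(t)$ (obtained via an integration by parts against the integrating factor $\exp(\int_0^t \lambda(s)\,ds)$, whose exponent diverges to $+\infty$ because $c_*/E < 1$), yields $\mathbf{H}(t) = \mathcal{O}\bigl(t^{-(1 - c_*/E - \alpha)}\bigr)$, which is exactly the stated bound. The main technical obstacle is the careful bookkeeping of the various subexponential losses: the log-Sobolev prefactor $p_{\delta_1}(1/\epsilon)$, the function $\gamma(\epsilon)$, the product $|\epsilon_t'|\xi(\epsilon_t)$, and the moment estimate each contribute sub-polynomial factors in $t$, and one has to verify that their combined effect can be absorbed into an arbitrarily small error exponent relative to the strictly positive gap $1 - c_*/E$ provided by the hypothesis $E > c_*$.
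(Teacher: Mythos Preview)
Your proposal is correct and follows essentially the same route as the paper: you bound $\mathbf{H}(t)$ by the Fisher information via the log-Sobolev inequality (Proposition~\ref{prop:logSobolev}), substitute into the dissipation estimate of Proposition~\ref{prop:Hprimet}, use the moment bound of Proposition~\ref{prop:XtYtest} and the cooling asymptotics to arrive at a differential inequality of the form $\mathbf{H}'(t)\leq -c_1 t^{-c_*/E-\alpha}\mathbf{H}(t)+c_2 t^{-(1-\alpha)}$, and then conclude by the standard Gr\"onwall/integrating-factor argument. The paper's proof is identical in structure (it is slightly more terse, simply absorbing the factor $2$ and $\gamma(\epsilon_t)$ into the polynomial prefactor and citing \cite[Lemma~19]{M18} and \cite[Lemma~6]{Miclo92AIHP} for the last step), so there is nothing further to compare.
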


\begin{proof}
	The proof mimics that of \cite[Lemma $19$]{M18}, except that we have an improved estimate of the log-Sobolev constant in our setting. More precisely, using the log-Sobolev inequality Proposition \ref{prop:logSobolev} and Proposition \ref{prop:Hprimet} we have
	\begin{align}\label{eq:Hprimetbd}
		\mathbf{H}^{\prime}(t) &\leq -\dfrac{1}{p_{\delta_1}(1/\epsilon_{t})} e^{-\frac{c_{*}}{\epsilon_{t}}}\mathbf{H}(t) + |\epsilon_t^{\prime}| \xi(\epsilon_t) (\mathbf{H}(t) + \mathbb{E}\left(1 + \norm{X_t}^2 + \norm{Y_t}^2\right)) \nonumber \\
		&= \left(-\dfrac{1}{p_{\delta_1}(1/\epsilon_{t})} e^{-\frac{c_{*}}{\epsilon_{t}}} + |\epsilon_t^{\prime}| \xi(\epsilon_t) \right) \mathbf{H}(t) + |\epsilon_t^{\prime}| \xi(\epsilon_t) \mathbb{E}\left(1 + \norm{X_t}^2 + \norm{Y_t}^2\right)
	\end{align}
	where we recall $\xi(\epsilon_{t})$ is a subexponential function, and $p_{\delta_1}(1/\epsilon_{t})$ is first introduced in Proposition \ref{prop:logSobolev}.
	
	First, we handle the second term in \eqref{eq:Hprimetbd}. Using $|\epsilon_t^{\prime}| = \mathcal{O}(1/t)$, Proposition \ref{prop:XtYtest} and $(\ln t)^p = \mathcal{O}(t^{\alpha})$ for large enough $t$ and any $\alpha, p > 0$ leads to
	\begin{align}\label{eq:thirdterm}
		|\epsilon_t^{\prime}| \xi(\epsilon_t) \mathbb{E}\left(1 + \norm{X_t}^2 + \norm{Y_t}^2\right) = \mathcal{O}\left(\dfrac{1}{t^{1-\alpha}}\right).
	\end{align}
	
	Next, we consider the first term in \eqref{eq:Hprimetbd}. Using the fact that $p_{\delta_1}(1/\epsilon_{t})$ is polynomial in $1/\epsilon_{t}$ leads to
	\begin{align}
		|\epsilon_t^{\prime}| \xi(\epsilon_t) &= \mathcal{O}\left(\dfrac{1}{t^{1-\alpha}}\right), \label{eq:secondterm} \\ 
		\dfrac{1}{p_{\delta_1}(1/\epsilon_{t})} e^{-\frac{c_{*}}{\epsilon_{t}}} &=  \Omega \left(\dfrac{1}{t}\right)^{\frac{c_*}{E} + \alpha}. \label{eq:firstterm}
	\end{align}
	Collecting the results of \eqref{eq:thirdterm}, \eqref{eq:firstterm} and \eqref{eq:secondterm} and put these back into \eqref{eq:Hprimetbd}, if we choose $\alpha$ small enough, there exists constants $c_1, c_2 > 0$ such that
	$$\mathbf{H}^{\prime}(t) \leq - c_1 \left(\dfrac{1}{t}\right)^{\frac{c_*}{E} + \alpha} \mathbf{H}(t) + c_2 \left(\dfrac{1}{t}\right)^{1-\alpha}.$$
	 The rest of the proof follows exactly that of \cite[Lemma $19$]{M18} (with $E_*$ therein replaced by our $c_*$), which further relies on the estimate obtained in \cite[Lemma $6$]{Miclo92AIHP}.
\end{proof}

\subsection{Wrapping up the proof}\label{subsec:finish}


%
%

In this subsection, we finish off the proof of Theorem \ref{thm:main1} by using the auxiliary results obtained in previous sections.

\begin{proof}[Proof of Theorem \ref{thm:main1}]
	We follow the same proof as \cite{M18}. Let $(X_t^{\pi},Y_t^{\pi})$ with law $\pi_{\epsilon_t}$. For any $\delta > 0$, we have
	$$
	\mathbb{P}\left(U\left(X_{t}\right)> U_{min} + \delta\right) \leq \mathbb{P}\left(U\left(X_{t}^{\pi}\right)> U_{min} +\delta\right) + \norm{h_t-1}_{L^{1}(\pi_{\epsilon_{t}})}.
	$$
	Using the Pinsker's inequality,
	$$
	\left\|h_{t}-1\right\|_{L^{1}(\pi_{\epsilon_{t}})} \leqslant \sqrt{2 \mathrm{Ent}_{\pi_{\epsilon_{t}}}\left(h_{t}\right)} \leq \sqrt{2 \mathbf{H}(t)}.
	$$
	Together with Proposition \ref{prop:Htbd} and Proposition \ref{prop:Pstatbd} yields, for constants $A_2, D_2 > 0$,
	$$\mathbb{P}\left(U\left(X_{t}\right)> U_{min} + \delta\right) \leq D_2 e^{-\frac{\delta}{2\epsilon_t} } + A_2 \left(\dfrac{1}{t}\right)^{\frac{1 - \frac{c_*}{E} - \alpha}{2}} \leq A \left(\dfrac{1}{t}\right)^{\min\bigg\{\frac{1 - \frac{c_*}{E} - \alpha}{2}, \frac{\delta}{2E}\bigg\}}.$$
	
\end{proof}

\begin{rk}\label{rk:thm1}
	In the adaptive setting as described in Section \ref{subsec:adapt} and Section \ref{sec:adapt}, thanks to Proposition \ref{prop:Htbdad} we have exactly the same result with the same proof (with $E$ replaced by $E_t$ and $c_* = c_*(U,c,\delta_1)$ here replaced by $c_*(U,c_t,\delta_1)$).
\end{rk}

\section{An adaptive algorithm (IAKSA) and its convergence}\label{sec:adapt}

Throughout this section, we assume that $U(x) \geq 0 = U_{min}$ for the ease of calculation.

Recall that in IKSA, introduced in Theorem \ref{thm:main1}, the performance of the diffusion depends on the parameter $c$. In this section, we propose an adaptive method to tune both the parameter $c = (c_t)_{t \geq 0}$ and the energy level $E = (E_t)_{t \geq 0}$. First, we rewrite the dynamics of $(X_t,Y_t)_{t \geq 0}$ to emphasize the dependence on $c$ and $E$:
\begin{align*}
d X_t &= Y_t \, dt, \\
d Y_t &= - \dfrac{1}{\epsilon_t} Y_t \, dt - \epsilon_t \nabla_x H_{\epsilon_t,c_t}(X_t) \, dt + \sqrt{2} \, dB_t,
\end{align*}
where $H_{\epsilon_t,c_t}$ is introduced in \eqref{eq:Heps}. The parameter $c$ and the cooling schedule are tuned adaptively using the running minimum $(\mathcal{M}_t := \min_{v \leq t} U(X_v))_{t \geq 0}$. Recall that for $m,n \in \mathbb{N}$, $\varphi_m$ is a mollifier introduced in Section \ref{subsec:trundiff}. In the following we shall use the mollifier $\varphi_{\frac{1}{n}}$ with support on $(-\frac{1}{n},\frac{1}{n})$. We tune the parameter $(c_t)_{t \geq 0}$ adaptively by setting
\begin{align}\label{eq:ct}
	c_t := \varphi_{\frac{1}{n}} \star \mathcal{M}_{\left(\cdot - \frac{1}{n}\right)_+}(t),
\end{align}
and hence, according to the definition of $c_*$ in \eqref{eq:c*}, for arbitrary $\delta_1 >0$ an upper bound of $c_*$ is given by
$$c_* = c_*(U,c_t,\delta_1) \leq \mathcal{M}_{\left(t - \frac{2}{n}\right)_+} - U_{min} + \delta_1 = \mathcal{M}_{\left(t - \frac{2}{n}\right)_+} + \delta_1.$$
Let $\delta_2 > \delta_1$. Now, the energy level $(E_t)_{t \geq 0}$ that appears on the numerator of the cooling schedule is also tuned adaptively:
\begin{align}\label{eq:Et}
	E_t := \varphi_{\frac{1}{n}} \star  \mathcal{M}_{\left(\cdot - \frac{3}{n}\right)_+}(t) - U_{min} + \delta_2 \geq \mathcal{M}_{\left(t - \frac{2}{n}\right)_+} + \delta_2 > \mathcal{M}_{\left(t - \frac{2}{n}\right)_+} + \delta_1.
\end{align}
As we shall see in the proof, the primary reason of mollifying $\mathcal{M}_t$ allows us to consider the derivative of $c_t$ or $E_t$ with respect to time $t$, and the choice of using $\varphi_{\frac{1}{n}}$ is arbitrary. This is essential in analyzing the dissipation of the distorted entropy. For instance, we need to consider the time derivative of the cooling schedule, and as such we have to ensure that $E_t$ is differentiable.

For the convenience of readers, we restate Theorem \ref{thm:main2} below:

\begin{theorem}
	Under Assumption \ref{assump:main}, consider the kinetic dynamics $(X_t,Y_t)_{t \geq 0}$ described by
	\begin{align*}
	d X_t &= Y_t \, dt, \\
	d Y_t &= - \dfrac{1}{\epsilon_t} Y_t \, dt - \epsilon_t \nabla_x H_{\epsilon_t,c_t}(X_t) \, dt + \sqrt{2} \, dB_t,
	\end{align*}
	where $H_{\epsilon_t,c_t}$ is introduced in \eqref{eq:Heps}, $c_t$ is tuned adaptively according to \eqref{eq:ct} and the cooling schedule is 
	$$\epsilon_{t} = \dfrac{E_t}{\ln t}$$
	with $E_t$ satisfying \eqref{eq:Et}. Given $\delta > 0$, for large enough $t$ and a constant $A > 0$, we consider sufficiently small $\alpha$ such that $\alpha \in (0, \frac{\delta_2-\delta_1}{U(X_0)-U_{min}+\delta_2})$,  and select $\delta_1, \delta_2 > 0$ such that $0 < \delta_2 - \delta_1 < \delta$, to yield
	\begin{align*}
	\mathbb{P}\left(U(X_t) > U_{min} + \delta \right) \leq A \left(\dfrac{1}{t}\right)^{a},
	\end{align*}
	where 
	$$a := \min\bigg\{\frac{\frac{\delta_2 - \delta_1}{\delta + \delta_2}-\alpha}{2},\frac{\frac{\delta_2 - \delta_1}{U(X_0) - U_{min} + \delta_2}-\alpha}{2}\bigg\}.$$
\end{theorem}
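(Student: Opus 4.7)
The plan is to replicate the architecture of the proof of Theorem \ref{thm:main1} (Section \ref{subsec:finish}), lifting each ingredient to the non-Markovian setting in which $(c_t,E_t)$ depend on the history of $\mathcal{M}_t = \min_{v \leq t}U(X_v)$. As flagged in Remark \ref{rk:thm1}, the central output of this lift is an adaptive version of Proposition \ref{prop:Htbd} (``Proposition \ref{prop:Htbdad}'') with a random exponent, stating
$$\mathbf{H}(t) \leq B\,(1/t)^{1-c_*(U,c_t,\delta_1)/E_t-\alpha}.$$
Combined with Pinsker and the instantaneous version of Proposition \ref{prop:Pstatbd}, this yields the master inequality
$$\mathbb{P}(U(X_t) > U_{min}+\delta) \leq D_2 (1/t)^{\delta/(2E_t)} + B\,(1/t)^{(1-c_*(U,c_t,\delta_1)/E_t-\alpha)/2},$$
and it remains to (a) justify the adaptive entropy bound and (b) convert the random exponents into the deterministic $a$.

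For task (a) I would revisit Sections \ref{subsec:weak}--\ref{subsec:diss} substituting $H_{\epsilon_t,c_t}$ for $H_{\epsilon_t,c}$ everywhere. The Lyapunov analysis of $R_{\epsilon_t,c_t}$ (Section \ref{subsubsec:Lya}), the mollifier $\eta_{m,\epsilon_t,c_t}$ of Section \ref{subsec:trundiff}, and the time-derivative identity \eqref{eq:timedHeta} each pick up an extra contribution proportional to $\dot c_t$ alongside the existing $\dot\epsilon_t$ terms. The mollifications in \eqref{eq:ct}--\eqref{eq:Et} are precisely what make $(c_t,E_t) \in C^1$ in $t$ with $|\dot c_t|, |\dot E_t|$ pathwise dominated by $(\max\varphi)\,n\,(U(X_0)-U_{min})$; since $\mathcal{M}$ is nonincreasing and bounded, these extra terms are subexponential and absorb into the existing asymptotic bookkeeping, reproducing Proposition \ref{prop:Htbd} verbatim with $(E,c_*)$ replaced by $(E_t,c_*(U,c_t,\delta_1))$.

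For task (b), the structural inequalities
$$c_*(U,c_t,\delta_1) \leq \mathcal{M}_{(t-2/n)_+}-U_{min}+\delta_1, \qquad E_t \geq \mathcal{M}_{(t-2/n)_+}-U_{min}+\delta_2$$
give pathwise
$$1 - \frac{c_*(U,c_t,\delta_1)}{E_t} \geq \frac{\delta_2-\delta_1}{\mathcal{M}_{(t-2/n)_+}-U_{min}+\delta_2}.$$
Plugging the trivial worst-case bound $\mathcal{M}_{(t-2/n)_+}-U_{min}\leq U(X_0)-U_{min}$ into the master inequality already yields $\mathbb{P}(U(X_t)>U_{min}+\delta) \leq A(1/t)^{a_2}$ with $a_2 = \tfrac{1}{2}(\tfrac{\delta_2-\delta_1}{U(X_0)-U_{min}+\delta_2}-\alpha)$, the second entry of the minimum defining $a$. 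To extract the first entry, I would split
$$\mathbb{P}(U(X_t)>U_{min}+\delta) \leq \mathbb{P}\bigl(U(X_t)>U_{min}+\delta,\,\mathcal{M}_{(t-2/n)_+}-U_{min}\leq\delta\bigr) + \mathbb{P}\bigl(\mathcal{M}_{(t-2/n)_+}-U_{min}>\delta\bigr),$$
bound the second summand by $\mathbb{P}(U(X_{(t-2/n)_+})>U_{min}+\delta) = \mathcal{O}(t^{-a_2})$ via the preceding step, and apply the master inequality on the first event where the tighter denominator $\delta+\delta_2$ becomes available, producing the rate $a_1 = \tfrac{1}{2}(\tfrac{\delta_2-\delta_1}{\delta+\delta_2}-\alpha)$. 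Summing the two contributions yields $A(1/t)^{\min(a_1,a_2)}$, which is the claim.

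The main obstacle is step (a): pushing the Gamma-calculus and dissipation arguments of Section \ref{subsec:diss} through a genuinely non-Markovian dynamics. In particular, differentiating $\Psi_{\epsilon_t,c_t}(h_t)\pi_{\epsilon_t,c_t}$ in $t$ generates $\dot c_t$ contributions whose subexponential control relies essentially on the mollification in \eqref{eq:ct}--\eqref{eq:Et}, and the hypoelliptic smoothness and non-explosion arguments of Section \ref{subsec:density} have to be re-derived for the time-dependent adaptive generator $L_{\epsilon_t,c_t}$. Once this bookkeeping is discharged, task (b) and the final Pinsker step proceed essentially verbatim as in Section \ref{subsec:finish}.
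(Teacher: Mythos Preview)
Your overall architecture matches the paper's: lift Sections~\ref{subsec:weak}--\ref{subsec:diss} to the adaptive setting to obtain the analogue of Proposition~\ref{prop:Htbd} with random exponent $1-c_*(U,c_t,\delta_1)/E_t$ (this is exactly the paper's Proposition~\ref{prop:Htbdad}), then combine with Pinsker and Proposition~\ref{prop:Pstatbd} and finally convert the random exponent to the deterministic $a$.

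There is, however, a real gap in your handling of task~(a). You assert only that $|\dot c_t|,|\dot E_t|$ are pathwise bounded by the constant $(\max\varphi)\,n\,(U(X_0)-U_{min})$, and claim this suffices for the extra terms to ``absorb into the existing asymptotic bookkeeping''. It does not. In the adaptive dissipation inequality (the paper's Proposition~\ref{prop:Hprimetad}),
\[
\mathbf{H}'(t)\;\leq\;-I(t)+\bigl(|\epsilon_t'|+|c_t'|\bigr)\xi_4(\epsilon_t)\Bigl(\mathbf{H}(t)+\mathbb{E}\bigl(1+\|X_t\|^2+\|Y_t\|^2\bigr)\Bigr),
\]
the new contribution $|c_t'|\,\xi_4(\epsilon_t)$ must be $\mathcal{O}(t^{-1+\alpha})$ so as to be dominated by the log-Sobolev term $e^{-c_*/\epsilon_t}/p_{\delta_1}(1/\epsilon_t)=\Omega(t^{-c_*/E_t-\alpha})$. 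Since $\xi_4$ is only subexponential in $1/\epsilon_t$, mere boundedness of $|c_t'|$ gives $|c_t'|\,\xi_4(\epsilon_t)=\mathcal{O}(t^{\alpha})$, which swamps the dissipation. The paper makes $|c_t'|=\mathcal{O}(1/t)$ and $|E_t'|=\mathcal{O}(1/t)$ explicit hypotheses of Proposition~\ref{prop:Htbdad} and then verifies them separately in Lemma~\ref{lem:check}, using not just the boundedness of $\mathcal{M}$ but its monotone convergence together with the structure of the mollifier. Your sketch misses this rate requirement entirely.

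For task~(b) your route is slightly different from the paper's but equally valid. The paper conditions on $\mathcal{F}_{(t-1/n)_+}$, writes the unconditional probability as a Stieltjes integral in the law of $\mathcal{M}_{(t-2/n)_+}$, splits the range of integration at $U_{min}+\delta$, and handles the upper piece by integration by parts. Your event-splitting
\[
\mathbb{P}\bigl(U(X_t)>U_{min}+\delta,\ \mathcal{M}_{(t-2/n)_+}-U_{min}\le\delta\bigr)+\mathbb{P}\bigl(\mathcal{M}_{(t-2/n)_+}-U_{min}>\delta\bigr)
\]
achieves the same thing more directly: both arguments rest on the same pathwise inequality $1-c_*/E_t\ge(\delta_2-\delta_1)/(\mathcal{M}_{(t-2/n)_+}-U_{min}+\delta_2)$ and deliver the two rates $a_1,a_2$ whose minimum is the claimed $a$.
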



The rest of this section is devoted to the proof of Theorem \ref{thm:main2}.

\subsection{Dissipation of the distorted entropy in the adaptive setting}\label{subsec:adapt}

In this subsection, we present some auxiliary results which will be used in Section \ref{subsec:pfmain2}, the proof of Theorem \ref{thm:main2}. In a nutshell, we show that under appropriate assumptions on $c_t^{\prime}$ and $E_t^{\prime}$, key results such as Proposition \ref{prop:Htbd} also hold in this adaptive setting:

\begin{proposition}\label{prop:Htbdad}
	Under Assumption \ref{assump:main}, suppose further that the parameter $c$ is tuned adaptively by $(c_t)_{t \geq 0}$ and the energy level is $(E_t)_{t \geq 0}$, which are non-increasing with respect to time. We write $c_* = c_*(U,c_t,\delta_1)$ as in \eqref{eq:c*}, and we assume that 
	\begin{align*}
	|c_t^{\prime}| = \mathcal{O}\left(\dfrac{1}{t}\right), \quad
	|E_t^{\prime}| = \mathcal{O}\left(\dfrac{1}{t}\right), \quad
	E_t = \Omega(1).
	\end{align*}
	For sufficiently small $\alpha$ with $\alpha \in (0,1 - \frac{c_*}{E_t})$ for all $t \geq 0$, there exists constant $B > 0$ such that for large enough $t$, 
	$$\mathbf{H}(t) \leq B \left(\dfrac{1}{t}\right)^{1 - \frac{c_*}{E_t} - \alpha}.$$
	Note that both $c_*$ and $E_t$ are possibly time-dependent.
\end{proposition}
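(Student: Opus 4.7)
The plan is to mimic the proof of Proposition \ref{prop:Htbd} in the non-adaptive case, carefully tracking the additional contributions that arise from the time dependence of $c_t$. The three ingredients are: (i) a dissipation inequality of the form of Proposition \ref{prop:Hprimet} upgraded to the adaptive setting, (ii) the log-Sobolev inequality of Proposition \ref{prop:logSobolev} applied at each time $t$ with parameter $c_t$, and (iii) the Gr\"onwall-type argument of \cite[Lemma 6]{Miclo92AIHP} to integrate the resulting ODE. Throughout, the hypotheses $|c_t^\prime|, |E_t^\prime| = \mathcal{O}(1/t)$ and $E_t = \Omega(1)$ will be used to absorb the extra terms into the subexponential prefactors already present.

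First, I would redo the computation of Section \ref{subsec:diss} by writing the instantaneous invariant measure as $\pi_{\epsilon_t, c_t}$ and the relative density $h_t = dm_t/d\pi_{\epsilon_t, c_t}$. The time derivative of the distorted entropy $\mathbf{H}(t)$ now picks up contributions from \emph{both} $\epsilon_t^\prime$ and $c_t^\prime$: applying the chain rule to $\ln \pi_{\epsilon_t, c_t}$ splits as $\epsilon_t^\prime \,\partial_{\epsilon} \ln \pi + c_t^\prime \,\partial_c \ln \pi$. The partial derivative $\partial_c H_{\epsilon,c}$ (and hence $\partial_c \ln \pi_{\epsilon,c}$) is bounded in absolute value by the Lipschitz-type estimate
$$|\partial_c H_{\epsilon,c}(x)| \leq \frac{M_5}{\epsilon}(U(x) - U_{min})_+ + \frac{M_5}{\epsilon},$$
owing to Assumption \ref{assump:main}.\ref{it:assumpf}. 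Combined with the quadratic growth of $U$, this yields an analog of \eqref{eq:partialepsbd} with an extra subexponential factor depending on $\epsilon_t$. Consequently the analog of Proposition \ref{prop:Hprimet} becomes
$$\mathbf{H}^\prime(t) \leq -I(t) + \bigl(|\epsilon_t^\prime| + |c_t^\prime|\bigr)\,\tilde\xi(\epsilon_t)\,\bigl(\mathbf{H}(t) + \mathbb{E}\bigl(1 + \|X_t\|^2 + \|Y_t\|^2\bigr)\bigr)$$
for some subexponential $\tilde\xi$. By hypothesis both $|\epsilon_t^\prime|$ and $|c_t^\prime|$ are $\mathcal{O}(1/t)$, so the perturbation is of the same order as in the fixed-parameter case.

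Next, at each frozen time $t$ the log-Sobolev inequality Proposition \ref{prop:logSobolev} applies to $\pi_{\epsilon_t, c_t}$ with constant $\max(\epsilon_t/2, p_{\delta_1}(1/\epsilon_t)\,e^{c_*(U,c_t,\delta_1)/\epsilon_t})$, where $c_* = c_*(U, c_t, \delta_1)$ is the clipped critical height associated with the current parameter $c_t$. Feeding this into the dissipation inequality above and using Proposition \ref{prop:XtYtest} (whose proof is insensitive to the adaptive nature of $c_t$ since the Lyapunov bound of Proposition \ref{prop:LepsReps} only depends on $f$ and $U$ through constants, not through $c$) yields
$$\mathbf{H}^\prime(t) \leq -c_1\left(\frac{1}{t}\right)^{c_*/E_t + \alpha}\mathbf{H}(t) + c_2 \left(\frac{1}{t}\right)^{1-\alpha},$$
for sufficiently small $\alpha > 0$ and large enough $t$, exactly as in \eqref{eq:Hprimetbd}. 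Here the lower bound $E_t = \Omega(1)$ is crucial to ensure that $e^{c_*/\epsilon_t}$ is polynomial in $t$ rather than superpolynomial.

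The main obstacle is that the exponents in this ODE are now time-dependent through $E_t$ and $c_*(U, c_t, \delta_1)$, so \cite[Lemma 6]{Miclo92AIHP} does not apply verbatim. However, since $E_t$ is non-increasing and $c_t$ is non-increasing (hence $c_*(U,c_t,\delta_1)$ is non-increasing), the ratio $c_*/E_t$ is non-decreasing but by assumption stays bounded away from $1 - \alpha$. Consequently one can essentially freeze $E_t$ and $c_t$ at time $t$ when integrating (as in \cite{M18}), the error being lower-order, and apply the Miclo argument with the time-dependent exponent. This yields the desired bound $\mathbf{H}(t) \leq B(1/t)^{1 - c_*/E_t - \alpha}$.
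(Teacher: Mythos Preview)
Your approach is essentially the one in the paper: derive an adaptive analogue of Proposition~\ref{prop:Hprimet} by tracking the extra $c_t^\prime$ contribution, apply the log-Sobolev inequality at each frozen $(\epsilon_t,c_t)$, and integrate the resulting differential inequality. The paper arrives at exactly the same inequality
$\mathbf{H}^\prime(t) \leq -c_1(1/t)^{c_*/E_t + \alpha}\mathbf{H}(t) + c_2(1/t)^{1-\alpha}$
and defers the final integration to \cite{CKP20}, checking only that $\int^t (1/s)^{c_*/E_s + \alpha}\,ds \to \infty$ via $c_*/E_s + \alpha < 1$.

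One point deserves more care. You assert that the moment estimate Proposition~\ref{prop:XtYtest} carries over because the Lyapunov bound of Proposition~\ref{prop:LepsReps} ``only depends on $f$ and $U$ through constants, not through $c$''. This is not quite right: $R_{\epsilon_t,c_t}$ depends on $c_t$ through $H_{\epsilon_t,c_t}$, so when you differentiate $n_{t,p} = \mathbb{E}(R_{\epsilon_t,c_t}^p)$ in $t$ you pick up an additional term $(\partial_t c_t)\,\partial_{c_t} n_{t,p}$. The paper handles this explicitly (its Propositions~\ref{prop:Repsupad} and~\ref{prop:XtYtestad}), bounding $|\partial_{c_t} R_{\epsilon_t,c_t}|$ by a subexponential $\xi_5(\epsilon_t)$; combined with $|c_t^\prime| = \mathcal{O}(1/t)$ the extra term is absorbed just as you would expect, but it does need to be checked. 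Also, your claim that $c_*/E_t$ is non-decreasing is not justified (both numerator and denominator are non-increasing), though you correctly observe that only the uniform bound $c_*/E_t + \alpha < 1$ is actually needed for the integration step.
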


\begin{rk}
	In Section \ref{subsec:pfmain2}, we show that such a choice of $\alpha \in (0,1 - \frac{c_*}{E_t})$ is possible.
\end{rk}

The rest of this subsection is devoted to the proof of Proposition \ref{prop:Htbdad}.

\subsubsection{Auxiliary results}\label{subsub:aux}

In this subsection, we present five auxiliary results which will be used in the proof of Proposition \ref{prop:Htbdad}, namely Lemma \ref{lem:epsprimedad}, Proposition \ref{prop:Hprimetad}, Proposition \ref{prop:Repsupad} and Proposition \ref{prop:XtYtestad}.

First, to emphasize the dependence of $c_t$ on various quantities, compared with \eqref{def:Hetamt} the truncated distorted entropy $\mathbf{H}$ is now written as
$$\mathbf{H}_{\eta_m,\epsilon_t,c_t}(t) := \int \eta_{m,\epsilon_t,c_t} \left(\Phi_1(h_t) + \gamma(\epsilon_t) \Phi_0(h_t)\right) \, d \pi_{\epsilon_t,c_t},$$
where we note that the stationary distribution $\pi_{\epsilon_t,c_t}$ depends on $c_t$ and $\eta_{m,\epsilon_{t},c_t}$ depends on $c_t$ through $R_{\epsilon_{t},c_t}$. Taking the derivative with respect to $t$ gives
\begin{align}\label{eq:timedHetaad}
\dfrac{d}{dt} \mathbf{H}_{\eta_m,\epsilon_t,c_t}(t) &= \int \eta_{m,\epsilon_t,c_t} \dfrac{d}{dt}\left(\Psi_{\epsilon_t}(h_t)\right) \, d \pi_{\epsilon_t,c_t} + \epsilon^{\prime}_t \int \dfrac{d}{d\epsilon_t} \eta_{m,\epsilon_t,c_t} \Psi_{\epsilon_t}(h_t)\pi_{\epsilon_t,c_t}\, dxdy \\
&\quad + c^{\prime}_t \int \dfrac{d}{dc_t} \eta_{m,\epsilon_t,c_t} \Psi_{\epsilon_t}(h_t)\pi_{\epsilon_t,c_t}\, dxdy. \nonumber
\end{align}

Comparing with \eqref{eq:timedHeta}, the extra term in \eqref{eq:timedHetaad} vanishes when $c_t = c$ for all $t$. The first two terms in \eqref{eq:timedHetaad} can be handled in exactly the same way as in Lemma \ref{lem:intetad}, equation \eqref{eq:epsprimed} and Lemma \ref{lem:epsprimed}. We proceed to simplify the third term in \eqref{eq:timedHetaad}. We note that
\begin{align}
\partial_{c_t} R_{\epsilon_{t},c_t} = \epsilon_{t} \partial_{c_t} H_{\epsilon_{t},c_t}(x) &= \epsilon_{t} \left(\int_{U_{min}}^{U(x)} \dfrac{f^{\prime}(u-c_t)_+}{\left(f(u-c_t)_+ + \epsilon_t \right)^2}\, du - \dfrac{f^{\prime}(U(x)-c_t)_+}{f(U(x)-c_t)_+ + \epsilon_t}\right) \nonumber \\
&\leq \dfrac{M_5}{\epsilon_{t}}\left(M_3 + c_t - U_{min}\right) =: \xi_5(\epsilon_{t}),  \label{eq:xi5}\\
\partial_{c_t} \eta_{m,\epsilon_t,c_t} &= v_m^{\prime}\left(\ln \left(R_{\epsilon_t,c_t} + 2d_{2,\epsilon_t} + \delta + \delta_2 \right)\right) \dfrac{1}{R_{\epsilon_t,c_t} + 2d_{2,\epsilon_t} + \delta + \delta_2} \partial_{c_t} R_{\epsilon_{t},c_t} \nonumber \\
&= \mathcal{O}\left(\dfrac{1}{m}\right), \nonumber \\
|\partial_{c_t} \ln \pi_{\epsilon_t,c_t}(x,y)| &\leq |\partial_{c_t} H_{\epsilon_t,c_t}| + \int  |\partial_{c_t} H_{\epsilon_t,c_t} | \, d\pi_{\epsilon_t,c_t} \nonumber \\
&\leq \dfrac{2 M_5}{\epsilon_{t}^2}\left(M_3 + c_t - U_{min}\right) + \dfrac{2 M_5}{\epsilon_{t}}, \nonumber 
\end{align}
where we use Assumption \ref{assump:main} in the first inequality. The above computation leads to
\begin{align}\label{eq:adap}
|\partial_{c_t} \ln \pi_{\epsilon_t,c_t}(x,y)| + \norm{\nabla \partial_{c_t} \ln \pi_{\epsilon_t,c_t}(x,y)}^2 &\leq \xi_2(\epsilon_t)(1 + \norm{x}^2),
\end{align}
where $\xi_2(\epsilon_{t})$ is a subexponential function of $\epsilon_{t}$. Our first auxiliary result handles the third term in \eqref{eq:timedHetaad}.

\begin{lemma}\label{lem:epsprimedad}
	There exist subexponential function $\xi_3(\epsilon_t)$ such that 
	$$c^{\prime}_t \int \eta_{m,\epsilon_t,c_t} \left(\dfrac{d}{dc_t}  \Psi_{\epsilon_t}(h_t)\pi_{\epsilon_t,c_t} \right)\, dxdy \leq |c^{\prime}_t| \xi_3(\epsilon_t) \left( \int \eta_{m,\epsilon_t, c_t} \Psi_{\epsilon_t}(h_t) \, d\pi_{\epsilon_t,c_t} +  \int(1+\norm{x}^2) m_t(x,y) \, dxdy\right).$$
\end{lemma}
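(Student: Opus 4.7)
The plan is to mimic the proof of Lemma~\ref{lem:epsprimed}, differentiating the integrand with respect to $c_t$ rather than $\epsilon_t$. The starting point is the decomposition
\begin{align*}
\partial_{c_t}\bigl(\Psi_{\epsilon_t}(h_t)\pi_{\epsilon_t,c_t}\bigr) = \partial_{c_t}\bigl(\Phi_1(h_t)\pi_{\epsilon_t,c_t}\bigr) + \gamma(\epsilon_t)\,\partial_{c_t}\bigl(\Phi_0(h_t)\pi_{\epsilon_t,c_t}\bigr),
\end{align*}
where, crucially, $\gamma(\epsilon_t)$ is independent of $c_t$, so no analogue of the $\gamma^{\prime}$ term from Lemma~\ref{lem:epsprimed} appears. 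The main engine throughout will be the bound \eqref{eq:adap} on $\partial_{c_t}\ln\pi_{\epsilon_t,c_t}$ and its gradient, which plays here the role that \eqref{eq:partialepsbd} played in Lemma~\ref{lem:epsprimed}. Note that since the $y$-marginal of $\pi_{\epsilon_t,c_t}$ is a centered Gaussian of variance $\epsilon_t$ that does not involve $c_t$, the right-hand side of \eqref{eq:adap} involves only $\norm{x}^2$, matching the growth appearing in the target estimate.

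For the $\Phi_0$ piece I would use the identity $\Phi_0(h_t)\pi_{\epsilon_t,c_t}=m_t\ln(m_t/\pi_{\epsilon_t,c_t})$ together with the fact that the density $m_t$ of the process does not depend on the parameter $c_t$ (only the reference measure $\pi_{\epsilon_t,c_t}$ does), so that $\partial_{c_t}\bigl(\Phi_0(h_t)\pi_{\epsilon_t,c_t}\bigr)=-m_t\,\partial_{c_t}\ln\pi_{\epsilon_t,c_t}$. Taking absolute values and invoking \eqref{eq:adap} bounds this contribution by $\gamma(\epsilon_t)\xi_2(\epsilon_t)\int(1+\norm{x}^2)m_t\,dxdy$. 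For the $\Phi_1$ piece, I would write $\Phi_1(h_t)\pi_{\epsilon_t,c_t}=m_t\norm{(\nabla_x+\nabla_y)\ln(m_t/\pi_{\epsilon_t,c_t})}^2$, differentiate---again using that $m_t$ is $c_t$-independent so that only $\ln\pi_{\epsilon_t,c_t}$ is affected---and apply Young's inequality $2|ab|\le a^2+b^2$ to absorb the first factor into $\int\eta_{m,\epsilon_t,c_t}\Phi_1(h_t)\,d\pi_{\epsilon_t,c_t}$ while controlling the second via $\norm{\nabla\partial_{c_t}\ln\pi_{\epsilon_t,c_t}}^2\leq\xi_2(\epsilon_t)(1+\norm{x}^2)$.

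Summing the two contributions, multiplying by $|c^{\prime}_t|$, and using $|\Phi_0(h)|\le\Phi_0(h)+2/e$ together with $\eta_{m,\epsilon_t,c_t}\le 1$ and $\int m_t\,dxdy=1$ allows me to replace $\int\eta_{m,\epsilon_t,c_t}\Phi_1(h_t)\,d\pi_{\epsilon_t,c_t}$ by $\int\eta_{m,\epsilon_t,c_t}\Psi_{\epsilon_t}(h_t)\,d\pi_{\epsilon_t,c_t}$ at the cost of a bounded constant that is absorbed into the moment integral. Setting $\xi_3(\epsilon_t):=1+2\gamma(\epsilon_t)/e+(2+\gamma(\epsilon_t))\xi_2(\epsilon_t)$, which is subexponential since both $\gamma$ and $\xi_2$ are, yields the claimed estimate. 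The computation itself is essentially routine bookkeeping; the real technical content has already been discharged in establishing \eqref{eq:xi5} and \eqref{eq:adap}, where the clipping structure of $f$ (in particular the bound $f^{\prime}\leq M_5$ together with $f=M_4$ eventually) and the quadratic growth of $U$ combine to give the subexponential-in-$\epsilon_t$ prefactor. The only place I expect to pause is to confirm that the $1/m$ errors coming from differentiating $\eta_{m,\epsilon_t,c_t}$ in $c_t$ vanish as $m\to\infty$, which should follow from the proof of Lemma~\ref{lem:Lepseta} applied with $\partial_{c_t}$ in place of $L_{\epsilon_t}$.
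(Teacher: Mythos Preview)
Your proposal is correct and follows essentially the same route as the paper: split $\Psi_{\epsilon_t}=\Phi_1+\gamma(\epsilon_t)\Phi_0$, rewrite each piece as a functional of $m_t$ and $\ln(m_t/\pi_{\epsilon_t,c_t})$, exploit that $m_t$ is $c_t$-independent, and control the resulting $\partial_{c_t}\ln\pi_{\epsilon_t,c_t}$ terms via \eqref{eq:adap} together with Young's inequality for the $\Phi_1$ part. Your observation that no $\gamma'$ term arises (since $\gamma$ does not depend on $c_t$) is exactly right; the remark about $1/m$ errors from $\partial_{c_t}\eta_{m,\epsilon_t,c_t}$ is not needed for this lemma itself---that contribution is handled separately in \eqref{eq:timedHetaad}, outside the scope of the present statement.
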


\begin{proof}
	\begin{align*}
	\int \eta_{m,\epsilon_t,c_t} \partial_{c_t} \left(\gamma(\epsilon_t) \Phi_0(h_t) \pi_{\epsilon_t, c_t}\right)\, dxdy &= \int \eta_{m,\epsilon_t,c_t} \gamma(\epsilon_t) \left(- \partial_{\epsilon_t} \ln \pi_{\epsilon_t, c_t}\right) m_t(x,y)\, dxdy \\
	&\leq \int \eta_{m,\epsilon_t,c_t} \gamma(\epsilon_t) \xi_2(\epsilon_t)(1 + \norm{x}^2) m_t(x,y)\, dxdy, \\
	&\quad + \left( \int \eta_{m,\epsilon_t,c_t} \Phi_0(h_t) d \pi_{\epsilon_t,c_t} + \dfrac{2}{e}\right),
	\end{align*}
	where we use $\Phi_0(h_t) \pi_{\epsilon_t, c_t} = \ln \left(\frac{m_t}{\pi_{\epsilon_t, c_t}}\right) m_t$ in the equality, and in the inequality we utilize $|\Phi_0(h)| \leq \Phi_0(h) + \frac{2}{e}$ as well as \eqref{eq:adap}. Next, for matrix $M_1$ since $\Phi_1(h_t) \pi_{\epsilon_t, c_t} = \norm{M_1 \nabla \ln \frac{m_t}{\pi_{\epsilon_t, c_t}}}^2 m_t$, we consider
	\begin{align*}
	\int \eta_{m,\epsilon_t,c_t} \partial_{c_t} \left(\Phi_1(h_t) \pi_{\epsilon_t, c_t}\right)\, dxdy &= -2 \int \eta_{m,\epsilon_t,c_t} M_1 \nabla \ln \left(\dfrac{m_t}{\pi_{\epsilon_t, c_t}}\right) \cdot M_1 \nabla \partial_{c_t} \ln \pi_{\epsilon_t, c_t} m_t \, dxdy \\
	&\leq \int \eta_{m,\epsilon_t,c_t} \Phi_1(h_t) \, d\pi_{\epsilon_t, c_t} + 2 \xi_2(\epsilon_t) \int(1+\norm{x}^2) m_t(x,y) \, dxdy,
	\end{align*}
	where the inequality follows again from \eqref{eq:adap}.
\end{proof}

Next, we consider the time derivative of the distorted entropy. The result is essentially the same as Proposition \ref{prop:Hprimet} for the case of fixed $c$, except that in the adaptive setting we have to introduce $c_t$ and its time derivative:

\begin{proposition}\label{prop:Hprimetad}
	There exist subexponential function $\xi_4(\epsilon_{t})$ such that
	$$\mathbf{H}^{\prime}(t) \leq - I(t) + \left(|\epsilon_t^{\prime}| + |c_t^{\prime}|\right) \xi_4(\epsilon_t) (\mathbf{H}(t) + \mathbb{E}\left(1 + \norm{X_t}^2 + \norm{Y_t}^2\right)).$$
\end{proposition}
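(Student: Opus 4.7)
The plan is to follow the same template as the proof of Proposition \ref{prop:Hprimet}, but starting from the decomposition \eqref{eq:timedHetaad} of the truncated distorted entropy, which now has three terms instead of two because of the additional $c_t$-dependence of $\pi_{\epsilon_t,c_t}$ and $\eta_{m,\epsilon_t,c_t}$. Concretely, I would apply Lemma \ref{lem:intetad} (which is stated without reference to whether $c$ is time-varying, since its proof only uses Gamma calculus together with the estimate on $L_{\epsilon_t}\eta_{m,\epsilon_t,c_t}$) to bound the first term by $-\int \eta_{m,\epsilon_t,c_t}\,\norm{\nabla h_t}^2/h_t\,d\pi_{\epsilon_t,c_t} + \frac{C}{m}\int(\Psi_{\epsilon_t}(h_t)+\gamma(\epsilon_t)e^{-1})\,d\pi_{\epsilon_t,c_t}$.

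Next I would handle the $\epsilon_t^{\prime}$-term exactly as in \eqref{eq:epsprimed} and Lemma \ref{lem:epsprimed}, which yields an upper bound of the form $\mathcal{O}(1/m)+|\epsilon_t^{\prime}|\xi(\epsilon_t)\bigl(\int \eta_{m,\epsilon_t,c_t}\Psi_{\epsilon_t}(h_t)\,d\pi_{\epsilon_t,c_t}+\int(1+\norm{x}^2+\norm{y}^2)m_t\,dxdy\bigr)$. The new $c_t^{\prime}$-term is the genuinely new ingredient, and this is exactly where Lemma \ref{lem:epsprimedad} applies: it gives the analogous bound with the subexponential function $\xi_3(\epsilon_t)$, up to the $\mathcal{O}(1/m)$ contribution coming from $\partial_{c_t}\eta_{m,\epsilon_t,c_t}$ (whose size was already noted to be $\mathcal{O}(1/m)$ just before \eqref{eq:adap}).

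Adding the three upper bounds, integrating from $s$ to $t$, and sending $m\to\infty$ via Fatou's lemma and item \eqref{it:converge} of Lemma \ref{lem:Lepseta} collapses $\eta_{m,\epsilon_t,c_t}$ to $1$, so the truncated Fisher-information term becomes the full $I(t)$ and the truncated distorted-entropy terms become $\mathbf{H}(t)$. Setting $\xi_4(\epsilon_t):=\max\{\xi(\epsilon_t),\xi_3(\epsilon_t)\}$ (still subexponential, since the maximum of two subexponentials is subexponential) and using $|\epsilon_t^{\prime}|+|c_t^{\prime}|$ as a common prefactor yields the stated inequality
\[
\mathbf{H}^{\prime}(t)\leq -I(t)+\bigl(|\epsilon_t^{\prime}|+|c_t^{\prime}|\bigr)\xi_4(\epsilon_t)\bigl(\mathbf{H}(t)+\mathbb{E}(1+\norm{X_t}^2+\norm{Y_t}^2)\bigr).
\]

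The routine bookkeeping is the assembly of the three bounds; the only genuinely new analytic content is already packaged in Lemma \ref{lem:epsprimedad} and in the computations around \eqref{eq:xi5}–\eqref{eq:adap}. The one subtlety I would watch carefully is the monotone/non-increasing assumption on $c_t$ and $E_t$ only enters globally (to ensure the running-minimum-based $c_t$ from \eqref{eq:ct} is differentiable with $|c_t^{\prime}|=\mathcal{O}(1/t)$), but is not needed for the pointwise-in-$t$ inequality at this stage — that is all deferred to Proposition \ref{prop:Htbdad}. Hence the main obstacle, such as it is, amounts to checking that the subexponential and moment estimates in Lemma \ref{lem:epsprimedad} can be swallowed into a single $\xi_4(\epsilon_t)$, which is immediate.
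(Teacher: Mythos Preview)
Your proposal is correct and follows essentially the same approach as the paper: you assemble the bounds from Lemma~\ref{lem:intetad}, \eqref{eq:epsprimed}, Lemma~\ref{lem:epsprimed}, and Lemma~\ref{lem:epsprimedad} into the three-term decomposition \eqref{eq:timedHetaad}, integrate in time, and pass to the limit $m\to\infty$ via Fatou's lemma and Lemma~\ref{lem:Lepseta}. Your explicit choice $\xi_4=\max\{\xi,\xi_3\}$ and your remark that the monotonicity of $c_t,E_t$ is not used at this stage are both accurate refinements of what the paper leaves implicit.
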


\begin{proof}
	First, according to the results of Lemma \ref{lem:intetad}, \eqref{eq:epsprimed}, Lemma \ref{lem:epsprimed} and Lemma \ref{lem:epsprimedad}, we put these back into \eqref{eq:timedHetaad} to obtain
	\begin{align*}
	\dfrac{d}{dt} \mathbf{H}_{\eta_m,\epsilon_t, c_t}(t) &\leq - \int \eta_{m,\epsilon_t, c_t} \dfrac{\norm{\nabla h_t}^2}{h_t} \, d\pi_{\epsilon_t, c_t} + \mathcal{O}\left(\dfrac{1}{m}\right) \\
	&\quad + \left(|\epsilon^{\prime}_t| + |c^{\prime}_t|\right)\xi_4(\epsilon_t) \left( \int \eta_{m,\epsilon_t, c_t} \Psi_{\epsilon_t}(h_t) \, d\pi_{\epsilon_t, c_t} +  \int(1+\norm{x}^2 + \norm{y}^2) m_t(x,y) \, dxdy\right).
	\end{align*}
	Integrating from $s$ to $t$ leads to
	\begin{align*}
	\mathbf{H}_{\eta_m,\epsilon_t, c_t}(t) &- \mathbf{H}_{\eta_m,\epsilon_s, c_s}(s) \leq - \int_s^t \int \eta_{m,\epsilon_u, c_u} \dfrac{\norm{\nabla h_u}^2}{h_u} \, d\pi_{\epsilon_u, c_u} du + \int_s^t \mathcal{O}\left(\dfrac{1}{m}\right) \, du \\
	&\quad + \int_s^t |\epsilon^{\prime}_u| \xi_4(\epsilon_u) \left( \int \eta_{m,\epsilon_u, c_u} \Psi_{\epsilon_u}(h_u) \, d\pi_{\epsilon_u, c_u} +  \int(1+\norm{x}^2 + \norm{y}^2) m_u(x,y) \, dxdy\right) \, du.
	\end{align*}
	The desired result follows by taking $m \to \infty$, Fatou's lemma and Lemma \ref{lem:Lepseta}.
\end{proof}

Our next two results generalize Proposition \ref{prop:Repsup} and Proposition \ref{prop:XtYtest} respectively to the adaptive setting.

\begin{proposition}\label{prop:Repsupad}
	Under the same assumptions as in Proposition \ref{prop:Htbdad}, for any $p \in \mathbb{N}$, $\alpha > 0$ and large enough $t$, there exist a constant $\widetilde{C}_{p,\alpha}$ such that
	$$\mathbb{E}\left(R_{\epsilon_t,c_t}^p(X_t,Y_t)\right) \leq \widetilde{C}_{p,\alpha}(1+t)^{1+\alpha}.$$
\end{proposition}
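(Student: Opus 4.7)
The plan is to mimic the induction-on-$p$ proof of Proposition \ref{prop:Repsup}, keeping careful track of the extra term produced by the time-dependence of $c_t$. Set $n_{t,p} := \mathbb{E}(R_{\epsilon_t,c_t}^p(X_t,Y_t))$; the base case $p=0$ is trivial. The strategy is to differentiate $n_{t,p}$ in $t$, bound each piece, and then apply Gronwall together with the inductive hypothesis on $n_{t,p-1}$.

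First I would verify that the auxiliary estimates feeding into the proof of Proposition \ref{prop:Repsup} hold uniformly in $t$ in the adaptive setting. Specifically, the Lyapunov property from Proposition \ref{prop:LepsReps}, the upper and lower bounds on $H_\epsilon$ and $R_\epsilon$ from Lemma \ref{lem:uplowH} and Lemma \ref{lem:Repsup}, the identity for $\Gamma_\epsilon R_\epsilon$ in \eqref{eq:gammaepsReps}, and the derivative estimate in Proposition \ref{prop:derRup} all have constants that depend on $c$ only through terms of the form $c + \delta - U_{min}$. Since $(c_t)_{t \geq 0}$ is non-increasing and bounded above by $U(X_0)$, these constants remain uniformly controlled in $t$, so each such estimate holds with $c$ replaced by $c_t$ with bounds independent of $t$.

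Next, by the chain rule I would write
\begin{align*}
\partial_t n_{t,p} = \epsilon_t^{\prime} \, \mathbb{E}(\partial_{\epsilon_t} R_{\epsilon_t,c_t}^p(X_t,Y_t)) + c_t^{\prime} \, \mathbb{E}(\partial_{c_t} R_{\epsilon_t,c_t}^p(X_t,Y_t)) + \mathbb{E}(L_{\epsilon_t,c_t}(R_{\epsilon_t,c_t}^p)(X_t,Y_t)).
\end{align*}
The first and third terms are handled verbatim as in Proposition \ref{prop:Repsup}. For the new middle term, \eqref{eq:xi5} gives the pointwise bound $\partial_{c_t} R_{\epsilon_t,c_t} \leq \xi_5(\epsilon_t)$ with $\xi_5$ subexponential, hence
$$|c_t^{\prime} \, \mathbb{E}(\partial_{c_t} R_{\epsilon_t,c_t}^p)| \leq p\, |c_t^{\prime}|\, \xi_5(\epsilon_t)\, n_{t,p-1}.$$
Using the assumption $|c_t^{\prime}| = \mathcal{O}(1/t)$, the subexponential growth of $\xi_5(\epsilon_t)$, and the induction hypothesis on $n_{t,p-1}$, this extra contribution is of order $(1+t)^{\alpha/2}$ and can be absorbed into the source term already present in the non-adaptive argument.

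Putting these bounds together produces a differential inequality of the form
$$\partial_t n_{t,p} \leq -c_9 \epsilon_t^5\, n_{t,p} + c_{10}(p,\alpha)\,(1+t)^{1+\alpha/2},$$
identical in structure to \eqref{eq:partialtnt}, from which the Gronwall step of the original proof immediately yields $n_{t,p} \leq \widetilde{C}_{p,\alpha}(1+t)^{1+\alpha}$. The main obstacle, as I see it, is the bookkeeping required to confirm that none of the constants in the fixed-$c$ estimates implicitly blow up when $c$ is replaced by the time-dependent $c_t$; this reduces to the uniform boundedness of $c_t$, which is guaranteed by monotonicity of the running minimum.
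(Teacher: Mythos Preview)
Your proposal is correct and follows essentially the same induction-on-$p$ approach as the paper, with the same chain-rule decomposition and the same use of \eqref{eq:xi5} to control the new $c_t'$ term. Your treatment is in fact slightly more careful than the paper's written proof: you correctly track the factor $p\,n_{t,p-1}$ arising from $\partial_{c_t}R^p = pR^{p-1}\partial_{c_t}R$ (the paper's displayed bound for general $p$ omits this), and you make explicit the check that the constants in Propositions \ref{prop:LepsReps}, \ref{prop:derRup} and Lemmas \ref{lem:uplowH}, \ref{lem:Repsup} remain uniform in $t$ via the boundedness of $c_t$.
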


\begin{proof}
	We shall prove the result by induction on $p$. We denote by
	$n_{t,p} := \mathbb{E}\left(R_{\epsilon_t,c_t}^p(X_t,Y_t)\right)$. When $p = 0$, the result clearly holds. When $p = 1$, using Proposition \ref{prop:LepsReps} and \ref{prop:derRup} we have 
	\begin{align*}
	\dfrac{\partial}{\partial t} n_{t,1} &= \left(\partial_t c_t\right) \partial_{c_t} n_{t,1} + \left(\partial_t \epsilon_t\right) \partial_{\epsilon_{t}} n_{t,1} + \dfrac{\partial}{\partial s} \mathbb{E}\left(R_{\epsilon_t,c_t}(X_{t+s},Y_{t+s})\right)\bigg|_{s=0} \\
	&\leq |\partial_t c_t| \xi_5(\epsilon_{t}) + |\partial_t \epsilon_t| \left(C_{1,1}(\epsilon_t) n_{t,1} +  C_{1,2}(\epsilon_t) \right) + \mathbb{E}\left(L_{\epsilon_t}(R_{\epsilon_t})(X_t,Y_t)\right) \\
	&\leq |\partial_t c_t| \xi_5(\epsilon_{t}) + |\partial_t \epsilon_t| \left(C_{1,1}(\epsilon_t) n_{t,1} +  C_{1,2}(\epsilon_t) \right) - c_4 \epsilon_t^5 n_{t,1} + c_5(\epsilon_t) + c_1,
	\end{align*}
	where we use \eqref{eq:xi5} in the first inequality.
	As $\epsilon_t = \Omega(\frac{1}{\ln(1+t)})$, $|\partial_t \epsilon_t| = \mathcal{O}(\frac{1}{t}) = |\partial_t c_t|$ and $$C_{1,1}(\epsilon_t), C_{1,2}(\epsilon_t), c_5(\epsilon_t), \xi_5(\epsilon_{t}) = o(t^{\beta})$$ for any $\beta >0$ as $t \to \infty$, we deduce that, for constants $c_6 > 0, c_7$,
	\begin{align}\label{eq:partialtntad}
	\dfrac{\partial}{\partial t} n_{t,1} &\leq - c_6 \epsilon_t^5 n_{t,1} + c_7(1+t)^{1+\alpha/2}.
	\end{align}
	The rest of the argument is the same as Proposition \ref{prop:Repsupad}. This proves the result when $p = 1$. Assume that the result holds for all $q < p$, where $p \geq 2$. First, using Proposition \ref{prop:LepsReps} and equation \eqref{eq:gammaepsReps} we compute
	\begin{align*}
	L_{\epsilon_t}(R_{\epsilon_t, c_t}^p) &= p R_{\epsilon_t, c_t}^{p-1} L_{\epsilon_t}(R_{\epsilon_t, c_t}) + p(p-1) R^{p-2}_{\epsilon_t, c_t} \Gamma_{\epsilon_t} R_{\epsilon_t, c_t} \nonumber \\
	&\leq p R_{\epsilon_t, c_t}^{p-1} \left(- c_4 \epsilon_t^5 R_{\epsilon_t, c_t} + c_5(\epsilon_t) + c_1\right) + p(p-1) C_3(\epsilon_t) R^{p-1}_{\epsilon_t, c_t} + p(p-1) C_4(\epsilon_t) R^{p-2}_{\epsilon_t, c_t}.
	\end{align*}
	Differentiating with respect to $t$, followed by using Proposition \ref{prop:derRup} and equation \eqref{eq:LepsRp} give
	\begin{align*}
	\dfrac{\partial}{\partial t} n_{t,p} &= \left(\partial_t c_t\right) \partial_{c_t} n_{t,p} + \left(\partial_t \epsilon_t\right) \partial_{\epsilon_t} n_{t,p} + \dfrac{\partial}{\partial s} \mathbb{E}\left(R_{\epsilon_t}^p(X_{t+s},Y_{t+s})\right)\bigg|_{s=0} \\
	&\leq \left|\partial_t c_t\right| \xi_5(\epsilon_{t}) + |\partial_t \epsilon_t| \left(C_{p,1}(\epsilon_t) n_{t,p} +  C_{p,2}(\epsilon) n_{t,p-1}\right) + \mathbb{E}\left(L_{\epsilon_t}(R_{\epsilon_t}^p)(X_t,Y_t)\right) \\
	&\leq -c_9 \epsilon_t^5 n_{t,p} + c_{10}(p,\alpha) (1+t)^{1+\alpha/2},
	\end{align*}
	where we use the same asymptotic estimates that lead us to \eqref{eq:partialtntad} and the induction assumption on $n_{t,p-1}$ and $n_{t,p-2}$.
\end{proof}

\begin{proposition}\label{prop:XtYtestad}
	Under the same assumptions as in Proposition \ref{prop:Htbdad}, for any $p \in \mathbb{N}$, $\alpha > 0$ and large enough $t$ (which depends on $p, U, f$ and the temperature schedule $\epsilon_t$), there exist a constant $k$ such that
	\begin{align*}
	\mathbb{E}\left(\norm{X_t}^2 + \norm{Y_t}^2\right)^p \leq k (1+t)^{\alpha}.
	\end{align*}
\end{proposition}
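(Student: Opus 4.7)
The plan is to follow the exact same strategy as the proof of Proposition \ref{prop:XtYtest}, substituting the adaptive moment bound Proposition \ref{prop:Repsupad} for its non-adaptive counterpart Proposition \ref{prop:Repsup}. The key observation is that the lower bound on $R_{\epsilon,c}$ in Lemma \ref{lem:Repsup} was obtained via the lower bound of $H_{\epsilon,c}$ in Lemma \ref{lem:uplowH}, and that lower bound (being driven by the quadratic coercivity of $U$ through $a_1 \|x\|^2 - M - U_{min}$) does not depend on $c$. Hence the same estimate
$$R_{\epsilon_t,c_t}(x,y) \geq \frac{\epsilon_t}{M_4 + \epsilon_t}\bigl(a_1 \|x\|^2 - M - U_{min}\bigr) + \epsilon_t \ln \epsilon_t + \frac{\|y\|^2}{2} - \epsilon_t^3\bigl(\|x\|^2/2 + \|y\|^2/2\bigr)$$
remains valid uniformly in $c_t$, so that for large enough $t$ (when the $\epsilon_t^3$ terms are dominated) there exist $k_1 > 0$ and a subexponential correction $d_{2,\epsilon_t}$ such that
$$\|x\|^2 + \|y\|^2 \leq \frac{k_1}{\epsilon_t}\Bigl(R_{\epsilon_t,c_t}(x,y) + d_{2,\epsilon_t}\Bigr).$$

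First I would raise this bound to the $p$-th power and expand via the binomial theorem, isolating terms of the form $R_{\epsilon_t,c_t}^j/\epsilon_t^p$ multiplied by subexponential factors $d_{2,\epsilon_t}^{p-j}$. Next I apply Jensen's inequality inside the expectation to absorb all powers $j = 0, 1, \ldots, p$ into a single bound of the form $(\mathbb{E}[R_{\epsilon_t,c_t}^{pr}])^{1/r}$ for some arbitrary $r > 0$ (which we will take large). Then I invoke Proposition \ref{prop:Repsupad} to conclude
$$\mathbb{E}\bigl(\|X_t\|^2 + \|Y_t\|^2\bigr)^p \leq k_2 \frac{1}{\epsilon_t^p}\bigl(1 + d_{2,\epsilon_t}\bigr)^p (1+t)^{(1+\alpha/2)/r}.$$

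Finally I use the asymptotic estimates $\epsilon_t = \Omega(1/\ln(1+t))$ (which holds in the adaptive setting because $E_t = \Omega(1)$ by the hypotheses of Proposition \ref{prop:Htbdad}) and the subexponentiality of $d_{2,\epsilon_t}$ to convert factors of $1/\epsilon_t^p$ and $d_{2,\epsilon_t}^p$ into $\mathcal{O}((1+t)^{1/r})$ terms. Choosing $r$ sufficiently large relative to the target $\alpha$ then yields the claimed bound $\mathbb{E}(\|X_t\|^2 + \|Y_t\|^2)^p \leq k (1+t)^\alpha$. There is essentially no novel obstacle compared to the fixed-$c$ case: the only thing to verify is that all the adaptive generalizations of the underlying estimates are already in place, which is precisely what Proposition \ref{prop:Repsupad} provides, and that the lower bound on $R_{\epsilon_t,c_t}$ is insensitive to $c_t$, which follows by inspection of Lemma \ref{lem:uplowH}.
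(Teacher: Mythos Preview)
Your proposal is correct and follows exactly the same approach as the paper, which simply states that the proof is identical to that of Proposition~\ref{prop:XtYtest} except that Proposition~\ref{prop:Repsupad} is invoked in place of Proposition~\ref{prop:Repsup}. Your additional observation that the lower bound on $R_{\epsilon,c}$ from Lemma~\ref{lem:Repsup} is independent of $c$ (and that $\epsilon_t = \Omega(1/\ln(1+t))$ still holds via $E_t = \Omega(1)$) makes explicit precisely why the non-adaptive argument carries over unchanged.
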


\begin{proof}
	The proof is exactly the same as Proposition \ref{prop:XtYtest}, except that we apply Proposition \ref{prop:Repsupad}.
\end{proof}

\subsubsection{Proof of Proposition \ref{prop:Htbdad}}

Using the log-Sobolev inequality Proposition \ref{prop:logSobolev} and Proposition \ref{prop:Hprimetad} we have
\begin{align}\label{eq:Hprimetbdad}
\mathbf{H}^{\prime}(t) &\leq -\dfrac{1}{p_{\delta_1}(1/\epsilon_{t})} e^{-\frac{c_{*}}{\epsilon_{t}}}\mathbf{H}(t) + \left(|\epsilon_t^{\prime}| + |c_t^{\prime}|\right) \xi_4(\epsilon_t) (\mathbf{H}(t) + \mathbb{E}\left(1 + \norm{X_t}^2 + \norm{Y_t}^2\right)) \nonumber \\
&= \left(-\dfrac{1}{p_{\delta_1}(1/\epsilon_{t})} e^{-\frac{c_{*}}{\epsilon_{t}}} + \left(|\epsilon_t^{\prime}| + |c_t^{\prime}|\right) \xi_4(\epsilon_t) \right) \mathbf{H}(t) + \left(|\epsilon_t^{\prime}| + |c_t^{\prime}|\right) \xi_4(\epsilon_t) \mathbb{E}\left(1 + \norm{X_t}^2 + \norm{Y_t}^2\right)
\end{align}
where we recall $\xi_4(\epsilon_{t})$ is a subexponential function, and $p_{\delta_1}(1/\epsilon_{t})$ is first introduced in Proposition \ref{prop:logSobolev}.

First, we handle the second term in \eqref{eq:Hprimetbdad}. 
Note that
\begin{align*}
|\epsilon_t^{\prime}| = \mathcal{O}\left(\dfrac{1}{t} + |E_t^{\prime}|\right) = \mathcal{O}\left(\dfrac{1}{t}\right).
\end{align*}
Using $|c_t^{\prime}| = \mathcal{O}(1/t)$, Proposition \ref{prop:XtYtest} and $(\ln t)^p = \mathcal{O}(t^{\alpha})$ for large enough $t$ and any $\alpha, p > 0$ leads to
\begin{align}\label{eq:thirdtermad}
\left(|\epsilon_t^{\prime}| + |c_t^{\prime}|\right) \xi_4(\epsilon_t) \mathbb{E}\left(1 + \norm{X_t}^2 + \norm{Y_t}^2\right) = \mathcal{O}\left(\dfrac{1}{t^{1-\alpha}}\right).
\end{align}

Next, we consider the first term in \eqref{eq:Hprimetbdad}. Using the fact that $p_{\delta_1}(1/\epsilon_{t})$ is polynomial in $1/\epsilon_{t}$ and $E_t = \Omega(1)$ lead to
\begin{align}
\left(|\epsilon_t^{\prime}| + |c_t^{\prime}|\right) \xi_4(\epsilon_t) &= \mathcal{O}\left(\dfrac{1}{t^{1-\alpha}}\right), \label{eq:secondtermad} \\ 
\dfrac{1}{p_{\delta_1}(1/\epsilon_{t})} e^{-\frac{c_{*}}{\epsilon_{t}}} &=  \Omega \left(\dfrac{1}{t}\right)^{\frac{c_*}{E_t} + \alpha}. \label{eq:firsttermad}
\end{align}
Collecting the results of \eqref{eq:thirdtermad}, \eqref{eq:firsttermad} and \eqref{eq:secondtermad} and put these back into \eqref{eq:Hprimetbdad}, if we choose $\alpha$ small enough, there exists constants $c_1, c_2 > 0$ such that
$$\mathbf{H}^{\prime}(t) \leq - c_1 \left(\dfrac{1}{t}\right)^{\frac{c_*}{E_t} + \alpha} \mathbf{H}(t) + c_2 \left(\dfrac{1}{t}\right)^{1-\alpha}.$$
The rest of the proof follows exactly that of \cite[Equation $(C.32)$ to $(C.34)$]{CKP20}, and we need to check that, as $t$ to $\infty$,
$$\int_{\cdot}^t \left(\dfrac{1}{s}\right)^{\frac{c_*}{E_s} + \alpha} \,ds \geq \int_{\cdot}^t \left(\dfrac{1}{s}\right) \,ds \to \infty.$$

\subsection{Proof of Theorem \ref{thm:main2}}\label{subsec:pfmain2}

First, we check that with our choice of $(c_t)_{t \geq 0}$ in \eqref{eq:ct} and $(E_t)_{t \geq 0}$ in \eqref{eq:Et}, the assumptions in Proposition \ref{prop:Htbdad} are satisfied:

\begin{lemma}\label{lem:check}
	With our choice of $(c_t)_{t \geq 0}$ in \eqref{eq:ct} and $(E_t)_{t \geq 0}$ in \eqref{eq:Et}, we have
	\begin{align*}
	|c_t^{\prime}| = \mathcal{O}\left(\dfrac{1}{t}\right), \quad
	|E_t^{\prime}| = \mathcal{O}\left(\dfrac{1}{t}\right), \quad
	E_t = \Omega(1).
	\end{align*}
\end{lemma}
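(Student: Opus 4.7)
My plan is to verify the three estimates in turn, relying on simple convolution/mollification identities and on properties of the running minimum $\mathcal{M}_t = \min_{v \leq t} U(X_v)$, which is non-increasing, right-continuous, and bounded in $[U_{min}, U(X_0)]$.

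The uniform lower bound $E_t = \Omega(1)$ is immediate from the definition \eqref{eq:Et}: since $\mathcal{M}_s \geq U_{min}$ for every $s \geq 0$ and $\varphi_{1/n}$ is a probability density supported on $(-1/n, 1/n)$, the convolution satisfies $(\varphi_{1/n} \star \mathcal{M}_{(\cdot - 3/n)_+})(t) \geq U_{min}$ pointwise, so $E_t \geq \delta_2 > 0$ uniformly in $t \geq 0$. The same reasoning gives the companion (crude) upper bounds $c_t \leq U(X_0)$ and $E_t \leq U(X_0) - U_{min} + \delta_2$, both of which I will use as worst-case estimates in the bootstrap below.

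For the derivative bounds, I would substitute $u = s - 1/n$ to rewrite
\begin{equation*}
c_t \;=\; \int_{(t - 2/n)_+}^{t} K_n(t - u)\, \mathcal{M}_u\, du, \qquad K_n(r) := \varphi_{1/n}(r - 1/n),
\end{equation*}
and then compute $c_t'$ by integration by parts in $u$; the boundary terms vanish because $K_n$ vanishes at $0$ and $2/n$, and since $\mathcal{M}$ is right-continuous and non-increasing $d\mathcal{M}$ is a non-positive Borel measure. One obtains
\begin{equation*}
|c_t'| \;\leq\; \|K_n\|_\infty \cdot |d\mathcal{M}|\bigl([(t-2/n)_+, t]\bigr) \;=\; n\,\|\varphi\|_\infty \bigl(\mathcal{M}_{(t-2/n)_+} - \mathcal{M}_t\bigr),
\end{equation*}
and an identical bound for $|E_t'|$ with the window $[(t-4/n)_+, t-2/n]$ arising from the larger shift $3/n$ in \eqref{eq:Et}.

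The crux is then to bound the local running-minimum decrement $\mathcal{M}_{(t-2/n)_+} - \mathcal{M}_t$ by $O(1/t)$. By monotonicity, $0 \leq \mathcal{M}_{(t-2/n)_+} - \mathcal{M}_t \leq \mathcal{M}_{(t-2/n)_+} - U_{min}$, so it suffices to establish a polynomial decay $\mathcal{M}_s - U_{min} = O(s^{-1})$ for large $s$. I would deduce this by comparison with a non-adaptive IKSA run at a fixed large $c$ (for which Theorem \ref{thm:main1} applies directly with an explicit polynomial rate); monotonicity of $\mathcal{M}_t$ then converts the rate on $U(X_t)$ into the same rate on $\mathcal{M}_t - U_{min}$. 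The principal obstacle is the mild circularity between this lemma and Proposition \ref{prop:Htbdad}, since the quantitative convergence of $\mathcal{M}_t$ in the adaptive setting is itself a consequence of the entropy bound that uses the present lemma. I would break the loop by a bootstrap: on any finite horizon $[0, T]$ use the trivial uniform bound $|c_t'|, |E_t'| \leq n\|\varphi\|_\infty U(X_0)$ to run an a~priori version of the entropy dissipation argument, and on $[T, \infty)$ upgrade to $O(1/t)$ once polynomial decay of $\mathcal{M}_t$ has been obtained from the non-adaptive comparison.
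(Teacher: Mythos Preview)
Your integration-by-parts reduction to $|c_t'|\le n\|\varphi\|_\infty\bigl(\mathcal{M}_{(t-2/n)_+}-\mathcal{M}_t\bigr)$ is correct and efficient, but the step after it does not go through. You weaken the local decrement to $\mathcal{M}_{(t-2/n)_+}-U_{min}$ and then claim this is $O(t^{-1})$ by ``comparison with a non-adaptive IKSA run at fixed large $c$'' together with Theorem~\ref{thm:main1}. Neither ingredient delivers what you need. Theorem~\ref{thm:main1} bounds the \emph{probability} $\mathbb{P}(U(X_t)>U_{min}+\delta)$ at a rate $t^{-a}$ with $a=\min\{\tfrac{1-c_*/E-\alpha}{2},\tfrac{\delta}{2E}\}<\tfrac12$; it says nothing pathwise about the random variable $\mathcal{M}_t-U_{min}$, and even a Borel--Cantelli upgrade would at best give a power strictly below~$1$, never $O(t^{-1})$. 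Separately, no coupling or comparison principle between the adaptive dynamics and a fixed-$c$ IKSA is provided --- the two SDEs have different drifts --- so ``comparison with a non-adaptive run'' is a placeholder rather than an argument. Your bootstrap inherits both defects: the trivial bound on $[0,T]$ does not affect the asymptotic exponent on $[T,\infty)$, and the ``upgrade'' there still rests on the unjustified comparison.

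The paper proceeds along a different line and never reduces to a decay rate on $\mathcal{M}_t-U_{min}$. It differentiates the convolution kernel directly, writes $|c_t'|$ as an integral of the bounded non-increasing function $u\mapsto\mathcal{M}_{(u-1/n)_+}$ against the explicit weight $n\varphi((t-u)n)\cdot n/((t-u)n-1)^2$, and then argues via a monotone-convergence step (using the monotonicity of $s\mapsto\varphi(s)/(s-1)^2$) that $t|c_t'|\to 0$. The intended mechanism is thus purely a property of convolving a fixed-width mollifier against a bounded monotone function, not any quantitative convergence of $\mathcal{M}_t$ to $U_{min}$ --- qualitatively different from what you propose.
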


\begin{proof}
	Clearly, $E_t \geq \delta_2$ and so $E_t = \Omega(1)$. Next, we consider $c_t$:
	\begin{align*}
	c_t &= \int \mathcal{M}_{\left(u - \frac{1}{n}\right)_+} \varphi_{\frac{1}{n}}(t-u) \, du, \\
	c_t^{\prime} &= \int \mathcal{M}_{\left(u - \frac{1}{n}\right)_+} n \varphi((t-u)n) \dfrac{-n^2 2 (t-u)}{(((t-u)n)^2-1)^2} \, du, \\
	|c_t^{\prime}| &\leq \int \mathcal{M}_{\left(u - \frac{1}{n}\right)_+} n \varphi((t-u)n) \dfrac{n}{((t-u)n-1)^2} \, du. 
	\end{align*}
	Using the monotone convergence theorem, as $\varphi(t) \frac{1}{(t-1)^2}$ is non-increasing in $t$, we conclude that $t |c_t^{\prime}| \to 0$ as $t \to \infty$. The proof of $|E_t^{\prime}| = \mathcal{O}\left(\frac{1}{t}\right)$ is very similar and is omitted.
\end{proof}

We write $\mathcal{F}_t$ to be the canonical filtration generated by $\mathcal{M}_t$ up to time $t$. Thanks to Lemma \ref{lem:check}, Proposition \ref{prop:Htbdad}, Theorem \ref{thm:main1} and Remark \ref{rk:thm1} we have the following estimate:
\begin{align*}
\mathbb{P}\left(U(X_t) > U_{min} + \delta | \mathcal{F}_{(t - \frac{1}{n})_+} \right) \leq A \left(\dfrac{1}{t}\right)^{\min\bigg\{\frac{1 - \frac{c_*}{E_t} - \alpha}{2}, \frac{\delta}{2E_t}\bigg\}}.
\end{align*}
For the exponent of $(1/t)$, we select $\delta_1$ and $\delta_2$ such that $0 < \delta_2 - \delta_1 < \delta$ which gives
\begin{align*}
	\min\bigg\{\frac{1 - \frac{c_*}{E_t} - \alpha}{2}, \frac{\delta}{2E_t}\bigg\} \geq \min\bigg\{\frac{1 - \frac{\mathcal{M}_{\left(t - \frac{2}{n}\right)_+} + \delta_1}{E_t} - \alpha}{2}, \frac{\delta}{2E_t}\bigg\} &= \frac{1 - \frac{\mathcal{M}_{\left(t - \frac{2}{n}\right)_+} + \delta_1}{E_t} - \alpha}{2} \\
	&\geq \frac{1 - \frac{\mathcal{M}_{\left(t - \frac{2}{n}\right)_+} + \delta_1}{\mathcal{M}_{\left(t - \frac{2}{n}\right)_+} + \delta_2} - \alpha}{2}.
\end{align*}
Note that the choice of $\alpha$ is arbitrary, and we consider sufficiently small $\alpha$ such that $\alpha \in (0, \frac{\delta_2-\delta_1}{U(X_0)+\delta_2})$ to ensure 
the exponent of $(1/t)$ is positive, i.e. for all $t \geq 0$
$$\frac{1 - \frac{\mathcal{M}_{\left(t - \frac{2}{n}\right)_+} + \delta_1}{\mathcal{M}_{\left(t - \frac{2}{n}\right)_+} + \delta_2} - \alpha}{2} > 0.$$ 
This choice of $\alpha$ also satisfies the requirement in Proposition \ref{prop:Htbdad}. Using the law of iterated expectation yields
\begin{align*}
\mathbb{P}\left(U(X_t) > U_{min} + \delta \right) &= \mathbb{E}\left(\mathbb{P}\left(U(X_t) > U_{min} + \delta | \mathcal{F}_{(t - \frac{1}{n})_+} \right)\right) \\
&\leq A \int_{U_{min}}^{U(X_0)} \left(\dfrac{1}{t}\right)^{\frac{1 - \frac{y + \delta_1}{y + \delta_2} - \alpha}{2}} \, d \, \mathbb{P}\left(\mathcal{M}_{\left(t - \frac{2}{n}\right)_+} \leq y\right) \\
&= A \int_{U_{min}}^{U_{min}+\delta} \left(\dfrac{1}{t}\right)^{\frac{1 - \frac{y + \delta_1}{y + \delta_2} - \alpha}{2}} \, d \, \mathbb{P}\left(\mathcal{M}_{\left(t - \frac{2}{n}\right)_+} \leq y\right) \\
&\quad + A \int_{U_{min} + \delta}^{U(X_0)} \left(\dfrac{1}{t}\right)^{\frac{1 - \frac{y + \delta_1}{y + \delta_2} - \alpha}{2}} \, d \, \mathbb{P}\left(\mathcal{M}_{\left(t - \frac{2}{n}\right)_+} \leq y\right) \\
&\leq A \left(\dfrac{1}{t}\right)^{\frac{\frac{\delta_2 - \delta_1}{\delta + \delta_2}-\alpha}{2}} + A\left(\dfrac{1}{t}\right)^{\frac{1 - \frac{U(X_0) + \delta_1}{U(X_0) + \delta_2} - \alpha}{2}} \\
&\quad - A \int_{U_{min} + \delta}^{U(X_0)} \mathbb{P}\left(\mathcal{M}_{\left(t - \frac{2}{n}\right)_+} \leq y\right) \, d\left(\dfrac{1}{t}\right)^{\frac{1 - \frac{y + \delta_1}{y + \delta_2} - \alpha}{2}} \\
&\leq 2 A \left(\dfrac{1}{t}\right)^{\frac{\frac{\delta_2 - \delta_1}{\delta + \delta_2}-\alpha}{2}} + A \int_{U_{min} + \delta}^{U(X_0)} \mathbb{P}\left(\mathcal{M}_{\left(t - \frac{2}{n}\right)_+} > y\right) \, d\left(\dfrac{1}{t}\right)^{\frac{1 - \frac{y + \delta_1}{y + \delta_2} - \alpha}{2}} \\
&\leq A \left(\dfrac{1}{t}\right)^{\frac{\frac{\delta_2 - \delta_1}{\delta + \delta_2}-\alpha}{2}} + A  \left(\dfrac{1}{t}\right)^{\frac{\frac{\delta_2 - \delta_1}{U(X_0) + \delta_2}-\alpha}{2}},
\end{align*}
where the second inequality follows from integration by part.

\section*{Appendix: setup of the numerical results}\label{sec:appendix}

In this section, we describe the experimental setup for the numerical results presented in Section \ref{subsec:num}.

\subsubsection{Description of the four annealing methods}

We describe the four annealing methods that we test on:

\begin{itemize}
	\item IAKSA and KSA: KSA is a special case of IAKSA with $f = 0$. Instead of running \eqref{eq:improvedk}, we consider
	\begin{align*}
	d X_t &= Y_t \, dt, \\
	d Y_t &= - Y_t \, dt - \epsilon_t \nabla_x H_{\epsilon_t,c_t}(X_t) \, dt + \sqrt{2\epsilon_t} \, dB_t,
	\end{align*}
	and apply the Euler-Maruyama discretization with stepsize $(\eta(k))_{k \in \mathbb{N}_0}$, cooling schedule $(\epsilon(k))_{k \in \mathbb{N}_0}$ and adaptive $(c(k))_{k \in \mathbb{N}_0}$ to obtain $(X(k),Y(k))_{k \in \mathbb{N}_0}$:
	\begin{align*}
	X(k+1) &= X(k) + Y(k) \eta(k), \\
	Y(k+1) &= Y(k) - Y(k) \eta(k) - \epsilon(k) \nabla_x H_{\epsilon(k),c(k)}(X(k)) \eta(k) + \sqrt{2\epsilon(k)} \sqrt{\eta(k)} N(k),
	\end{align*}
	where $(N(k))$ is a sequence of i.i.d. standard normal random variables.
	
	\item IASA and SA: SA is a special case of IASA with $f = 0$. We simulate an Euler–Maruyama  discretization of \eqref{eq:improved} with stepsize $(\eta(k))_{k \in \mathbb{N}_0}$, cooling schedule $(\epsilon(k))_{k \in \mathbb{N}_0}$ and adaptive $(c(k))_{k \in \mathbb{N}_0}$ to obtain $(Z(k))_{k \in \mathbb{N}_0}$:
	\begin{align*}
	Z(k+1) = Z(k) - \nabla U(Z(k))\,\eta(k) + \sqrt{2 \left(f((U(Z(k))-c(k))_+) +\epsilon(k)\right)} \sqrt{\eta(k)} N(k).
	\end{align*}
	
\end{itemize}

\subsubsection{Description of the test functions and the parameters}

For both IAKSA and IASA, we use $f(u) = 0.5 \arctan(u)$. Note that although this choice of $f$ does not satisfy Assumption \ref{assump:main}, this is used in the numerical experiments in \cite{FQG97}. As for the benchmark functions, we use the following:

\begin{itemize}
	\item Ackley function $U_1$: We consider the $2$-dimensional Ackley function
	$$U_1(x_1,x_2) = -20 \exp \left(-0.2 \sqrt{\frac{1}{2} \sum_{i=1}^{2} x_{i}^{2}}\right)-\exp \left(\frac{1}{2} \sum_{i=1}^{2} \cos \left(2 \pi x_{i}\right)\right)+20+e$$
	with initial stepsize $\eta(0) = 0.05$. We use a multiplicative stepsize decay strategy: on every $1000$ iterations, the stepsize decreases by a factor of $0.999$. Denote $\Theta(k) = \sum_{s \leq k} \eta(s)$. We also use
	\begin{align*}
	c(k) &= \min_{v \leq k} U_1(X(v)) + \dfrac{1}{\Theta(k)+1}, \\
	\epsilon(k) &= \dfrac{2}{\ln(\Theta(k)+2)}.
	\end{align*}
	The initialization is $X(0) = (18.5,17.4)$ and for kinetic diffusions $Y(0) = 0$.
	
	\item Ackley3 function $U_2$: We consider the $2$-dimensional Ackley3 function
	$$U_2(x_1,x_2) = -200 \exp \left(-0.2 \sqrt{ \sum_{i=1}^{2} x_{i}^{2}}\right) +5 \exp \left(\cos (3 x_1)+\sin (3 x_2)\right)$$
	with initial stepsize $\eta(0) = 0.05$. We use a multiplicative stepsize decay strategy: on every $1000$ iterations, the stepsize decreases by a factor of $0.999$. We use
	\begin{align*}
	c(k) &= \min_{v \leq k} U_2(X(v)) + \dfrac{1}{\Theta(k)+1}, \\
	\epsilon(k) &= \dfrac{2}{\ln(\Theta(k)+2)}.
	\end{align*}
	The initialization is $X(0) = (18.4,12.8)$ and for kinetic diffusions $Y(0) = 0$.
	
	\item Rastrigin function $U_3$: We consider the $2$-dimensional Rastrigin function
	$$U_3(x_1,x_2) = 20 + \sum_{i=1}^{2}\left[x_{i}^{2}-10 \cos \left(2 \pi x_{i}\right)\right]$$
	with initial stepsize $\eta(0) = 0.5$. We use a multiplicative stepsize decay strategy: on every $1000$ iterations, the stepsize decreases by a factor of $0.999$. We use
	\begin{align*}
	c(k) &= \min_{v \leq k} U_3(X(v)) + \dfrac{1}{\Theta(k)+1}, \\
	\epsilon(k) &= \dfrac{0.5}{\ln(\Theta(k)+2)}.
	\end{align*}
	The initialization is $X(0) = (9.84,3.33)$ and for kinetic diffusions $Y(0) = 0$.
\end{itemize}

\section*{Acknowledgements}

We thank Jing Zhang for a careful proofreading, and appreciate the constructive remarks from Xuefeng Gao, Laurent Miclo, Andre Milzarek, Pierre Monmarch\'{e} and Wenpin Tang on various aspects of an earlier version of this work and stochastic optimization in general. In particular, we thank Andre Milzarek for suggesting the plot in Figure \ref{fig:landscape}. The author acknowledges the support from The Chinese University of Hong Kong, Shenzhen grant PF01001143 and the financial support from AIRS - Shenzhen Institute of Artificial Intelligence and Robotics for Society Project 2019-INT002.

\bibliographystyle{abbrvnat}
\bibliography{thesis}

\end{document}